\def\cal{\mathcal}
\def\Bbb{\mathbb}
\def\frak{\mathfrak}
\newenvironment{pf*}[1]{\proof[#1]}{\endproof}
\newtheorem{Theorem}[equation]{Theorem}
\newtheorem{Corollary}[equation]{Corollary}
\newtheorem{Lemma}[equation]{Lemma}
\newtheorem{Proposition}[equation]{Proposition}
\newtheorem{Claim}{Claim}
\theoremstyle{definition}
\newtheorem{Definition}[equation]{Definition}
\newtheorem{Conjecture}[equation]{Conjecture}
\theoremstyle{remark}
\newtheorem{Remark}[equation]{Remark}
\numberwithin{equation}{section}
\newcommand{\thmref}[1]{Theorem~\ref{#1}}
\newcommand{\conref}[1]{Conjecture~\ref{#1}}
\newcommand{\lemref}[1]{Lemma~\ref{#1}}
\newcommand{\propref}[1]{Proposition~\ref{#1}}
\newcommand{\corref}[1]{Corollary~\ref{#1}}
\newcommand{\defref}[1]{Definition~\ref{#1}}
\newcommand{\remref}[1]{Remark~\ref{#1}}
\newcommand{\C}{{\Bbb C}}
\newcommand{\Z}{{\Bbb Z}}
\newcommand{\Q}{{\Bbb Q}}
\newcommand{\End}{\operatorname{End}}
\newcommand{\Ext}{\operatorname{Ext}}
\newcommand{\id}{\operatorname{id}}
\newcommand{\ve}{\varepsilon}
\def\m{\mathfrak m}
\newcommand{\OO}{\mathcal O}
\newcommand{\YY}{\mathcal Y}
\newcommand{\Pic}{\operatorname{Pic}}
\newcommand{\ch}{\operatorname{ch}}
\newcommand{\Wedge}{{\textstyle \bigwedge}}
\newcommand{\Tor}{\text{\rm Tor}}
\newcommand{\rk}{\mathop{{\rm rk}}}
\newcommand{\td}{\mathop{\text{\rm td}}}
\newcommand{\Coeff}{\mathop{\text{\rm Coeff}}}
\def\oo{{\cal O}}
\def\<{\langle}
\def\>{\rangle}
\def\[[{[\! [}
\def\]]{]\! ]}
\def\(({(\!(}
\def\)){)\!)}
\def\F{{\mathcal F}}
\def\E{{\mathcal E}}\def\hh{{\mathbb H}}
\newcommand{\vir}{{\mathop{\text{\rm vir}}}}
\newcommand{\perf}{{\mathop{\text{\rm perf}}}}
\newcommand{\mov}{{\mathop{\text{\rm mov}}}}
\newcommand{\fix}{{\mathop{\text{\rm fix}}}}
\newcommand{\EL}{{\mathop{\mathcal EL}}}
\newcommand{\Cc}{\mathfrak C}
\newcommand{\Eu}{\operatorname{Eu}}
\newcommand{\adj}{\operatorname{adj}}
\def\ZZ{\mathbb Z}
\def\FF{\mathcal F}
\def\Ppic{\mathop{\mathfrak Pic}}
\def\Gg{\mathbb G}
\def\Xx{\mathfrak X}
\def\XX{\mathcal X}
\def\Cc{\mathfrak C}
\def\eps{\varepsilon}
\begin{document}
\title[Riemann-Roch for virtually smooth schemes]{Riemann-Roch Theorems and Elliptic Genus \\ for  Virtually Smooth Schemes}
\author{Barbara Fantechi}
\address{SISSA,
Via Beirut 2/4,
34014 Trieste,
Italy}
\email{fantechi@sissa.it}
\author{Lothar G\"ottsche}
\address{ICTP, Strada Costiera 11, 
34014 Trieste, Italy}
\email{gottsche@ictp.it}
\begin{abstract} For a proper scheme $X$ with a fixed $1$-perfect obstruction theory $E^\bullet$, we define virtual versions of holomorphic Euler characteristic, $\chi_{-y}$-genus, and elliptic genus; they are deformation invariant, and extend the usual definition in the  smooth case. We prove virtual versions of the Grothendieck-Riemann-Roch and Hirzebruch-Riemann-Roch theorems. We show that the virtual $\chi_{-y}$-genus is a polynomial, and use this to define a virtual topological Euler characteristic. We prove that the virtual elliptic genus satisfies a Jacobi modularity property; we state and prove  a localization theorem in the toric equivariant case. We show how some of our results apply to moduli spaces of stable sheaves.\end{abstract}

\maketitle
\tableofcontents
\section{Introduction}

Let $X$ be a scheme which admits a global embedding in a smooth scheme, and $E^\bullet$ a $1$-perfect obstruction theory for $X$. One can view the pair $(X,E^\bullet)$ as being a {\em virtually smooth scheme} of expected (or virtual) dimension $d:=\rk E^\bullet$; indeed, many definitions for smooth schemes have been extended to this case, in particular, the pair has a virtual fundamental class $[X]^{vir}\in A_d(X)$ and a virtual structure sheaf ${\mathcal O}_X^{vir}\in K_0(X)$, which behave well under deformations of the pair.  

In this paper we want to extend to complete virtually smooth schemes other important notions:
 in particular, we define and study virtual versions of the holomorphic Euler characteristic for  elements $V\in K^0(X)$, and of the $\chi_{-y}$ genus and the elliptic genus. As a consequence, we can also define a virtual version of the topological Euler characteristic and of the signature. The virtual holomorphic Euler characteristic was already considered in  \cite{Lee}, although it is not given this name.
In this paper we will see that these invariants behave in a very similar way to their non-virtual counterparts for smooth complete schemes.
All of these invariants reduce to the usual ones if $X$ is smooth and $E^\bullet$ is the cotangent bundle, and they are deformation invariant.

The main results of the paper are a virtual version of the Theorems of Hirzebruch-Riemann-Roch and of Grothendieck-Riemann-Roch (in the latter, the target is supposed to be smooth, and not just virtually smooth). We also prove that the virtual $\chi_{-y}$-genus is actually a polynomial of degree $d$, and show that the virtual Euler number (defined as $\chi_{-1}(X)$) can be expressed as the degree of the virtual top Chern class. We show that as in the case of smooth varieties, the virtual elliptic genus of a virtual Calabi-Yau manifold is a weak Jacobi form. 
Finally, as an easy consequence of the virtual Riemann-Roch Theorem and 
the virtual localization of \cite{GP} we establish a localization formula for the  virtual holomorphic Euler characteristic, in case everything is equivariant under the action of a torus.

In the particular case where $X$ has lci singularities and $E^\bullet=L_X^\bullet$ the cotangent complex, we prove that the virtual topological Euler characteristic coincides with Fulton's Chern class and deduce that this is invariant under deformations for proper lci schemes.

This paper deals mostly with virtually smooth schemes and not with stacks, although it should be possible to generalize to the case of Deligne-Mumford stacks. We finish the paper by a partial generalization to the case of gerbes,
which allows to apply the results to moduli spaces of sheaves on surfaces
and on threefolds with effective anticanonical bundle.

The original motivation for this work comes mostly from moduli spaces of coherent sheaves on surfaces. 
In \cite{GNY},  $K$-theoretic Donaldson invariants are introduced as 
the holomorphic Euler characteristics of determinant bundles  on moduli spaces of stable coherent
sheaves on an algebraic surface $S$, and for surfaces with $p_g=0$ 
their wallcrossing behaviour 
is studied in the rank $2$ case, under assumptions that ensure that the moduli spaces are well-behaved. Using the virtual  Riemann-Roch theorem these assumptions can be removed, and many other results of \cite{Moc} that a priori only apply to the usual Donaldson invariants can be extended to the $K$-theoretic Donaldson invariants. In the forthcoming paper \cite{GMNY} this program is carried out. In particular is is easy to calculate the  wallcrossing
for the virtual Euler characteristic, and show that it is given by the same formula as in the case of so-called good walls (see \cite{Go}).
In \cite{DM} a  physical derivation of a wallcrossing formula for Euler numbers of moduli spaces of sheaves is given in a very general context, which in particular implies that the wallcrossing formula is the same in the virtual and in the 
non-virtual case.

The Euler numbers of moduli spaces of stable coherent sheaves on surfaces have been studied by many authors. In \cite{VW}, Vafa and Witten made predictions  about their generating functions,  in particular, they are supposed to be given by modular forms. This has been checked in a number of cases.
In general when these moduli spaces are very singular, there is to our knowledge no mathematical interpretation for the Euler numbers that figure in the predictions of \cite{VW}. We hope that our definition of the virtual Euler number will provide such an interpretation. 

The $\chi_{-y}$-genus and the elliptic genus are natural refinements of the 
Euler number. Thus it is natural to refine the virtual Euler characteristic to the virtual $\chi_{-y}$-genus and the virtual elliptic genus, and hope for their generating functions to have modularity properties. If the moduli spaces are smooth of the expected dimension, this has in many cases been shown.
The results in this paper are closely related to those obtained for $[0,1]$-manifolds by
Ciocan-Fontanine and Kapranov in  \cite{CFK}; there they prove the 
Hirzebruch-Riemann-Roch theorem and a localization formula in K-theory. As explained to us by Ciocan-Fontanine, from their results the Grothendieck-Riemann-Roch theorem for morphisms of $[0,1]$-manifolds easily follows under the same assumptions as in our paper.  They also construct a cobordism class associated to a $[0,1]$-manifold, which implies the possibility of introducing and studying genera for $[0,1]$-manifolds, such as the elliptic genus. \\
The language of $[0,1]$-manifolds and virtually smooth schemes are closely related as follows. If $\XX$ is a $[0,1]$-manifold, then 
$(\pi_0(\XX),\Omega^\bullet_{\XX|\pi_0(\XX)})$ is a virtually smooth scheme
by \cite[Prop.~3.2.4]{CFK}; on the other hand, it is expected that all virtually smooth moduli spaces arise in this way. This was proven by Ciocan-Fontanine and Kapranov in the following cases: for the Quot scheme and (in outline) the moduli stack of stable sheaves in \cite{CFK1}, and for the Hilbert scheme and the moduli stack of stable maps in \cite{CFK2}.\\
We thank I.~Ciocan-Fontanine and M.~Kapranov for showing us a preliminary version of the paper in June 2006 with 
the above mentioned material, except for the cobordism. One of the steps in our  proof of the virtual Riemann-Roch-Theorem is an adaptation of the corresponding argument in \cite{CFK}. 
Differently 
from Ciocan-Fontanine and Kapranov, our motivation for studying this problem, came from the study of $K$-theoretic Donaldson invariants,
and we also consider to some extent the stack version of 
the virtual Riemann-Roch theorem, as well as modular properties of the virtual elliptic genus. 

In the papers \cite{Jo1},\cite{Jo2} Joshua deals in great generality with the relation of the virtual fundamental class and the virtual structure sheaf
for Deligne-Mumford stacks with a perfect obstruction theory. Mochizuki informed us that he independently proved  the  virtual Hirzebruch-Riemann-Roch theorem, with applications to $K$-theoretic Donaldson invariants 
\cite{Moc2}.

\subsection{Acknowledgements} This work was partially supported by European Science Foundation Programme ``Methods of Integrable Systems, Geometry, Applied Mathematics" (MISGAM) Marie Curie RTN ``European Network in Geometry, Mathematical Physics and Applications" (ENIGMA) and by the Italian research grant PRIN 2006 ``Geometria delle variet\`a proiettive".

Part of this work was done while the authors were participating in the special year ``Moduli Spaces" at the Mittag-Leffler Institut, Djursholm, Sweden.

We want to thank Paolo Aluffi for useful discussions about Fulton's 
Chern class, 
Ionu\c{t} Ciocan-Fontanine for explanations about dg-manifolds and $[0,n]$-manifolds, and 
George Thompson for useful discussions about the $\chi_{-y}$-genus.

\section{Background material}

\subsection{Conventions} A scheme will be a separated scheme of finite type over an algebraically closed field of characteristic zero. We assume that all schemes under consideration admit a global embedding in a smooth scheme. \\If $S$ is a scheme, we denote by $A_*(S)$ the Chow group of $S$ {\em with rational coefficients}, and by $A^*(S)$ the Chow cohomology of $S$ (as defined in \cite[Def.~17.3]{Fu}), also with rational coefficients.\\ We often omit $i_*$ from the notation when $i:S\to S'$ is a closed embedding of schemes and $i_*$ is either the induced map $A_*(S)\to A_*(S')$ or $K_0(S)\to K_0(S')$.

\subsection{Grothendieck groups} Let $S$ be a scheme. We let $K^0(S)$ be the Grothendieck group generated by locally free sheaves, and $K_0(S)$ the Grothendieck group generated by coherent sheaves; we recall that $K^0(S)$ is naturally an algebra, contravariant under arbitrary morphisms, and $K_0(S)$ is a module over $K^0(S)$, covariant under proper morphisms.  Moreover, the natural homorphism $K^0(S)\to K_0(S)$ (induced by the inclusion of locally free sheaves inside coherent sheaves) is an isomorphism if $S$ is smooth. 

\subsection{Grothendieck groups and perfect complexes}\label{enfl} 
 The fact that a scheme $X$ can be  embedded as a closed subscheme in a smooth separated scheme implies that every coherent sheaf is a quotient of a locally free sheaf (it would be enough to assume irreducible, reduced and locally factorial instead of smooth: see \cite[exercise III.6.8]{Ha}). In other words, all schemes we consider have enough locally frees.

A complex $E^\bullet\in D^b(X)$ on an arbitrary scheme $X$ is called {\em perfect} if it is locally  isomorphic to a finite complex of locally free sheaves.   We write $D^b_{\perf}(X)$ for the full subcategory whose objects are the perfect complexes.\\
Since we assume that $X$ has enough locally frees, any perfect complex has a global resolution, i.e. a quasi-isomorphic complex which is globally a finite complex of locally frees.\\
One can therefore define for every object  $E^\bullet$ in $D^b_{\perf}(X)$ an element $[E^\bullet]\in K^0(X)$ defined to be equal to  $\sum_{i=m}^n(-1)^i[F^i]$ for $F^m\to\ldots\to F^n$ a global locally free resolution of $E^\bullet$; it is easy to show that the map $E^\bullet\mapsto [E^\bullet]$ is well-defined and behaves well with respect to quasi-isomorphisms and distinguished triangles.\\
In case one wants to extend the results of this paper to a more general situation, e.g. $X$ an arbitary scheme or an algebraic stack, care will have to be taken to assume that the relevant objects admit a global resolution.

\subsection{Todd and Chern classes}
Let $E$ be a rank $r$ vector bundle on a scheme $S$, and denote by $x_1,\ldots,x_r$ its Chern roots. The Chern character $\ch(E)$ and the Todd class $\td(E)$ are defined by $$\ch(V):=\sum_{i=1}^r e^{x_i}\ \text{and}\  \td(V):=\prod_{i=1}^r \frac{x_i}{1-e^{-x_i}}.$$ These extend naturally to a ring homomorphism  $\ch:K^0(S)\to A^*(S)$  and a group homomorphism $\td:(K^0(S),+)\to (A^*(S)^\times, \cdot)$ to the multiplicative group of units in the ring $A^*(S)$.\\
The morphism $\det$ associating to a rank $r$ vector bundle $E$ on $S$ its determinant $\det E:=\Wedge^rE\in \Pic X$ extends naturally to a group homomorphism $\det: K^0(X)\to \Pic(X)$.\\
The homomorphisms $\ch$, $\td$ and $\det$ commute with pullback via arbitrary morphisms.

\subsection{Perfect morphisms} \label{perfectmor} A morphism $f:X\to Y$ of schemes is {\em perfect}
\cite[Example 15.1.8]{Fu} if and
only if it factors as a closed embedding $i:X\to S$ such that $i_*(\oo_Z)\in D^b(S)$ is perfect and admits a global resolution, and a smooth morphism $p:S\to Y$. In particular every lci morphism of schemes admitting a closed embedding in smooth, separated schemes is perfect. In this case one can define a {\em Gysin homomorphism}
$$f^*:K_0(X)\to K_0(Y), f^*([\F])=\sum_{i}
(-1)^i\Tor^S_i(i_*\oo_Z,p^*\F).$$

Note in particular that the closed embedding of the zero locus of a regular section of a vector bundle is perfect, since we can always use the Koszul resolution.

\subsection{Riemann Roch for arbitrary schemes} \label{lcitau}
We will use the notations of \cite[Chapter 18]{Fu}.
In particular we use \cite[Theorem~18.2, Theorem~18.3]{Fu}.
For every scheme $S$ let $\tau_S:K_0(S)\to A_*(S)$ be the group homomorphism 
defined in \cite[Theorem~18.3]{Fu}. We recall in particular the following properties, which are taken almost verbatim from Theorem 18.3: \begin{enumerate}
\item module homomorphism: for any $V\in K^0(S)$ and any $\F\in K_0(S)$ one has $\tau_S(V\otimes \F)=\ch(V)\cap \tau_S(\F)$;
\item Todd: if $S$ is smooth, $\tau_S(\oo_S)=\td(T_S)\cap [S]$; hence for every $V\in K^0(S)$ one has $\tau_S(V\otimes \oo_S)=\ch(V)\cdot \td(T_S)\cap [S]$;
\item covariance: for every proper morphism $f:S\to S'$ one has $f_*\circ \tau_S=\tau_{S'}\circ f_*:K_0(S)\to A_*(S')$;
\item local complete intersection:
if $f:X\to Y$ is an lci morphism, and $\alpha\in K_0(Y)$, then $f^*(\tau_Y(\alpha))=(\td T_f)^{-1}\cap\tau_X(f^*\alpha)$.
\end{enumerate}

\subsection{Fulton's Chern class}
Let $X$ be a scheme and $i:X\to M$ a closed embedding in a smooth scheme. Fulton's Chern class of $X$ (we take the name from \cite{A}) is defined in 
\cite[Example 4.2.6]{Fu}  to be  $c_F(X)=c(T_M|_X)\cap s(X,M)\in A_*(X)$; it is shown there that $c_F(X)$ is independent of the choice of the embedding.  In \cite{A} $c_F(X)$ for hypersurfaces is related to the Schwarz-MacPherson Chern class $c_*(X)$, which has the property that 
 $\deg( c_*(X))=e(X)$.
 It is easy to see (\cite[Example 4.2.6]{Fu}), that for plane 
 curves $C$,
$\deg(c_F(C))=e(C')$ where $C'$ is a smooth plane curve of the same degree. Note that $C$ is lci and $C'$ is a smoothening of $C$. We will generalize this statement to arbitrary proper lci schemes in Theorem \ref{FultonChern}.

\section{Virtual Riemann-Roch theorems}

In this section we prove a virtual version of the Grothendieck-Riemann-Roch theorem for a proper morphism from a virtually smooth scheme to a smooth scheme. It would be interesting to have a more general version for proper morphisms of virtually smooth schemes, but at the moment we do not have that.

\subsection{Setup and notation}\label{setup} This setup will be fixed throughout the paper.
We will fix a scheme $X$ with a $1$-perfect obstruction theory $E^\bullet$; whenever needed, we also choose an explicit global resolution of $E^\bullet$ as a complex of vector bundles $[E^{-1}\to E^0]$, which exists by \ref{enfl}.

We denote by $[E_0\to E_1]$ the dual complex and by $d$ the expected dimension $d:=\rk E^\bullet=\rk E^0-\rk E^{-1}$. Recall that all schemes are assumed to be separated, of finite type over an algebraically closed field of characteristic $0$, and admitting a closed embedding in a smooth scheme.\\
Let $T_X^\vir\in K^0(X)$ be the class $[E_0]-[E_1]$. Note that (as explained in \ref{enfl}) $T_X^\vir$ only depends on $X$ and $E^\bullet$, and not on the particular resolution chosen.

\def\aff{\mathbb A}
\def\CC{\mathfrak C}
\def\Nn{\mathfrak N}
\def\Ee{\mathfrak E}

\subsection{Virtual fundamental class and structure sheaf}
We recall from \cite[Section 5]{BF} the definition of virtual fundamental class. Let $\Cc_X$ be the intrinsic normal cone of $X$; it is naturally a closed substack of $\Nn_X:=h^1/h^0((\tau_{\ge -1}L_X^\bullet)^\vee)$, the intrinsic normal sheaf. The map $\phi$ induces a closed embedding $\Nn_X\to \Ee:=h^1/h^0(E^\vee)$, and $\Ee=[E_1/E_0]$. 
Let $C(E)$ be inverse image of $\Cc_X$ in $E_1$ via the natural projection $E_1\to \Ee$; it  is a cone over $X$ of pure dimension equal to the rank of $E_0$. Let $s_0:X\to E_1$ be the zero section; $s_0$ is a closed regular embedding, hence following \cite[Def.~3.3]{Fu} we can denote by $s_0^*:A_*(E_1)\to A_*(X)$ the natural, degree $-\rk  (E_1)$ pullback map (or Gysin homomorphism). Then the virtual fundamental class $[X]^\vir$ is by definition equal to $s_0^*([C(E)])\in A_d(X)$.

We denote by $\oo_X^\vir\in K_0(X)$
the virtual structure sheaf of $X$, whose definition we now briefly recall.

\label{vireasy}
The virtual structure sheaf $\oo_X^\vir$ is equal to 
$$\sum_{i=0}^{\infty}(-1)^i\Tor^{E_1}_i(\oo_X,\oo_{C}).$$
Note that the sum is indeed finite; in fact, since $X$ is the zero locus of the tautological section $s$ of the bundle $\pi^*E_1$ (where $\pi:E_1\to X$ is the natural projection), we can give an explicit finite locally free resolution of $s_{0*}\oo_X$ on $E_1$  by the Koszul complex $(\Wedge^*(\pi^*E_1^\vee),s^\vee)$.
In other words, $s_0:X\to E_1$ is a perfect morphism in the sense of \ref{perfectmor} and
$$\oo_X^\vir=s_0^*([\oo_{C}]).$$
See \cite[Rem.~5.4]{BF}
for more details.

\subsection{A fundamental identity}
Let $[F^{-1}\to F^0]$ be a global resolution of $\tau_{\ge -1}(L_X)$,
i.e. a complex of coherent sheaves on $X$ with $F^0$ locally free of
rank $r$, with an isomorphism  $\psi:F_0\to \tau_{\ge -1}L_X^\bullet$.
Notice that such a global resolution is uniquely determined by a
morphism $\lambda:F^0\to \tau_{\ge -1}(L_X)$ such that
$h^0(\lambda):F^0\to \Omega_X$ is surjective.

We can use it to identify the intrinsic normal sheaf of $X$ with
$[F_1/F_0]$, and hence we get an induced cone $C(F)\subset F_1$ of pure
dimension $r$, the inverse image of the intrinsic normal cone inside
$[F_1/F_0]$. If $i:X\to M$ is a closed embedding in a smooth scheme,
we can choose $F^\bullet$ to be $[I/I^2\to \Omega_M|_X]$; we call it
the resolution induced by $i:X\to M$. If $\phi: E^\bullet\to \tau_{\ge
-1}L_X$ is an obstruction theory, then we can choose $F^0=E^0$ and
$F^{-1}=E^{-1}/\ker h^{-1}\phi$, with the induced map; we call it the
resolution induced by the obstruction theory. We denote by $p_F:C(F)\to X$ the natural projection.

\begin{Proposition}\label{goodprop} Let $F^\bullet $ be a presentation of $\tau_{\ge -1}(L_X)$. Let $p:C(F)\to X$ be the projection. Then $$
\tau_{C(F)}(\oo_{C(F)})=p_F^*(\td F_0)\cap [C(F)]\in A_*(C(F)).$$
\end{Proposition}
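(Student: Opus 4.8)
\emph{Proof plan.} The plan is to reduce to the presentation $G^\bullet=[I/I^2\to\Omega_M|_X]$ induced by a global closed embedding $i:X\hookrightarrow M$ into a smooth scheme (which exists by our standing assumption; there $G_0=T_M|_X$, $C(G)=C_{X/M}$ is the normal cone and $p_G=\bar\pi:C_{X/M}\to X$), and then to prove that case by deformation to the normal cone. The reduction rests on the comparison of presentations that makes $[X]^\vir$ and $\oo_X^\vir$ independent of the chosen resolution (\cite[Section~5]{BF}): any two presentations are linked, up to quasi-isomorphism, by a chain of stabilizations $H^\bullet\leftrightarrow H^\bullet\oplus[Q\xrightarrow{\id}Q]$ with $Q$ locally free, under which $C\bigl(H\oplus[Q\to Q]\bigr)=C(H)\times_XQ^\vee$ and the projection $\rho:C(H)\times_XQ^\vee\to C(H)$ is a vector bundle (in particular lci) with relative tangent pulled back from $Q^\vee$. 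Applying property~(4) of $\tau$ (\secref{lcitau}) to $\rho$ with $\alpha=\oo_{C(H)}$, and using that $\rho$ is flat with $\rho^*\oo_{C(H)}=\oo$ and $\rho^*[C(H)]=[C(H)\times_XQ^\vee]$, gives $\tau_{C(H)\times_XQ^\vee}(\oo)=p^*\td(Q^\vee)\cap\rho^*\tau_{C(H)}(\oo)$ for $p:C(H)\times_XQ^\vee\to X$ the projection; since $\td\bigl((H\oplus[Q\to Q])_0\bigr)=\td(H_0)\,\td(Q^\vee)$ and $\rho^*$ is an isomorphism on Chow groups, the asserted identity holds for $H^\bullet$ if and only if it holds for $H^\bullet\oplus[Q\to Q]$. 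Hence it is enough to treat $G^\bullet$.

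\emph{The embedded case via deformation to the normal cone.} Let $W$ be the deformation to the normal cone of $X$ in $M$, i.e.\ the complement in $\mathrm{Bl}_{X\times\{0\}}(M\times\A^1)$ of the strict transform of $M\times\{0\}$, with its flat projection to $\A^1$ (see \cite[Ch.~5]{Fu}); let $t\in\Gamma(\oo_W)$ be the pullback of the coordinate. Then the restriction of $W$ over $\Gg_m$ is isomorphic to $M\times\Gg_m$, the scheme-theoretic fibre $W_0=\mathrm{div}(t)$ over $0$ is $C_{X/M}$, and there is a morphism $q:W\to M$ (blow-down followed by the first projection) which over $\Gg_m$ is the first projection and satisfies $q\circ i_0=i\circ\bar\pi$, where $i_0:C_{X/M}=W_0\hookrightarrow W$. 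The crucial point is that $W_0$ is a \emph{principal} Cartier divisor: thus $i_0$ is a regular embedding of codimension one with \emph{trivial} normal bundle $N_{i_0}\cong\oo_{C_{X/M}}$, so $T_{i_0}=-[\oo_{C_{X/M}}]$ and $\td(T_{i_0})=1$, while the self-intersection formula gives $i_0^*\circ i_{0*}=0$ on $A_*(C_{X/M})$; also $i_0^*\oo_W=\oo_{C_{X/M}}$ (as $t$ is a nonzerodivisor) and $i_0^*[W]=[C_{X/M}]$, the specialization of $[M]$, which by \cite[Ch.~5]{Fu} is exactly the cone cycle with the multiplicities used in \cite[Section~5]{BF}. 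Now set $\gamma:=\tau_W(\oo_W)-\td(q^*T_M)\cap[W]$. Since an open immersion is lci with zero relative tangent, $\tau$ commutes with restriction to $W|_{\Gg_m}$, and by property~(2) of $\tau$ the restriction of $\tau_W(\oo_W)$ equals $\td(T_{M\times\Gg_m})\cap[M\times\Gg_m]$, which in turn equals the restriction of $\td(q^*T_M)\cap[W]$ (as $T_{\Gg_m}$ is trivial and $q$ restricts to the first projection); hence $\gamma$ restricts to $0$ on $W|_{\Gg_m}$, so by the excision sequence $A_*(C_{X/M})\xrightarrow{i_{0*}}A_*(W)\to A_*(W|_{\Gg_m})\to 0$ we have $\gamma=i_{0*}\delta$ for some $\delta$. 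Applying $i_0^*$: property~(4) of $\tau$ for $i_0$, with $\td(T_{i_0})=1$ and $i_0^*\oo_W=\oo_{C_{X/M}}$, gives $i_0^*\tau_W(\oo_W)=\tau_{C_{X/M}}(\oo_{C_{X/M}})$; moreover $i_0^*\bigl(\td(q^*T_M)\cap[W]\bigr)=\td\bigl(i_0^*q^*T_M\bigr)\cap i_0^*[W]=\bar\pi^*\td(T_M|_X)\cap[C_{X/M}]$ by $q\circ i_0=i\circ\bar\pi$, and $i_0^*\gamma=i_0^*i_{0*}\delta=0$. Together these give $\tau_{C_{X/M}}(\oo_{C_{X/M}})=\bar\pi^*\td(T_M|_X)\cap[C_{X/M}]$, the Proposition for $G^\bullet$, and hence for every $F^\bullet$.

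\emph{Where the difficulty lies.} I expect the care to be concentrated in the reduction step: one must check that the comparison of two arbitrary presentations in \cite{BF} really reduces to stabilizations $H^\bullet\leftrightarrow H^\bullet\oplus[Q\to Q]$ (up to exchanging the direction of the comparison morphism) and that $C(-)$ and $\td\bigl((-)_0\bigr)$ transform under these as stated, so that the Todd-class identity propagates. A second point needing care is the equality $i_0^*[W]=[C(F)]$ with the precise multiplicities of the cone cycle of \cite{BF} — i.e.\ that specialization along the deformation computes that cycle. Note that the argument deliberately never uses smoothness of $W$ itself (which fails as soon as $X\hookrightarrow M$ is not a regular embedding): only smoothness of the generic fibre $M\times\Gg_m$ and the regularity and triviality of the special-fibre embedding $i_0$ are used.
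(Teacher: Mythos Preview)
Your overall strategy and the deformation-to-the-normal-cone step match the paper exactly; the excision-plus-self-intersection argument you spell out is precisely the content of the paper's reference to \cite[Section~10.1]{Fu}, and your care about not using smoothness of $W$ itself is well placed.

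The only point of divergence is the reduction step, and your own caveat is on target. The paper does \emph{not} compare two presentations $F^\bullet$, $G^\bullet$ via stabilizations $H^\bullet\oplus[Q\xrightarrow{\id}Q]$ with zero map on $Q$; instead it builds a single common refinement $K^\bullet$ with $K^0=F^0\oplus G^0$ mapping to $\tau_{\ge -1}L_X$ by the \emph{sum} of the two structure maps, so that $C(K)\to C(F)$ is an \emph{affine} $G_0$-bundle (a $G_0$-torsor, in general nontrivial) rather than the trivial vector bundle $C(F)\times_X G_0$. Your trivial stabilization of $F^\bullet$ by $[G^0\xrightarrow{\id}G^0]$ and of $G^\bullet$ by $[F^0\xrightarrow{\id}F^0]$ both produce presentations with degree-$0$ term $F^0\oplus G^0$, but with \emph{different} maps to $L_X$ (one vanishing on $G^0$, the other on $F^0$), hence a priori different cones sitting in different $(-)_1$'s; there is no direct move in your chain that identifies them. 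The paper's sum map is exactly what makes one $K$ dominate both simultaneously. Fortunately the propagation argument you give---apply property~(4) of $\tau$ to the bundle projection and use that $\rho^*$ is an isomorphism on Chow groups---works verbatim for an affine bundle, so once you replace ``stabilization by an acyclic summand'' with ``common refinement via the sum map'' your proof goes through, and is then the paper's proof.
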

\begin{proof}
First step: it is enough to prove the proposition for one particular presentation. Indeed, given two presentations $F^\bullet$ and $G^\bullet$, we can compare either of them with the presentation $K^\bullet$ induced by $K^0:=F^0\oplus G^0\to \tau_{\ge -1}L_X$. As in \cite[Proposition 5.3]{BF}, the inclusion $F^0\to K^0$ induces a surjection $\bar\rho:K_1\to F_1$, and $C(K) =\bar\rho^{-1}(C(F))$. We let $\rho:C(K)\to C(F)$ be the restriction of $\bar\rho$; the map $\rho$ is part of  a natural exact sequence of cones (in the sense of \cite[Example 4.1.6]{Fu}) on $X$ $$
0\to G_0\to C(K)\to C(F)\to 0.$$
In particular $\rho$ is an affine bundle (in the sense of \cite[Section 1.9]{Fu}) with $T_\rho=p_K^*G_0$. Since it is an affine bundle, $\rho^*$ induces an isomorphism on Chow rings. So the statement holds for $F^\bullet$ if and only if  $$
\rho^*(\tau_{C(F)}(\oo_{C(F)}))=\rho^*(p_F^*(\td F_0)\cap [C(F)])\in A_*(C(K)).$$
Since $p_K=p_F\circ\rho$, and $\rho^*([C(F)]=(C[K])$, the right hand side is equal to $p_K^*(\td F_0)\cap [C(K)]$. By \ref{lcitau} applied to the smooth (hence lci) morphism $\rho$, the left hand side is equal to $\td(T_\rho)^{-1}\cap\tau_{C(K)}(\oo_{C(K)})$. 
So the equality holds for $F$ iff the equality $$
\td(T_\rho)^{-1}\cap\tau_{C(K)}(\oo_{C(K)})=p_K^*(\td F_0)\cap [C(K)]$$
holds in $A_*(C(K))$. Applying on both sides the invertible element $\td(T_\rho)=p_K^*(\td G_0)$ yields the equivalent formulation $$
\tau_{C(K)}(\oo_{C(K)})=(p_K^*(\td G_0)p_K^*(\td F_0))\cap [C(G)]$$
which is just the statement for $K$, since $K_0=F_0\oplus G_0$ and hence $\td K_0=\td F_0\cdot \td(G_0)$.\\
Second step: it is therefore enough to prove this in the case of
the resolution induced by a closed embedding $i:X\to M$  in a smooth
scheme (which exists by assumption). This is  proven in
in \cite[Lemma (4.3.2)]{CFK} under the additional assumption that $X$
and $M$ be quasiprojective, and we use a variation of their argument.
Let $\pi:\widetilde M\to \aff^1$ be the degeneration to the normal cone,
such that $\pi^{-1}(0)=C_{X/M}$ and $\widetilde
M_0:=\pi^{-1}(\aff^1_0)=M\times \aff^1_0$ (where we write $\aff^1_0$
for $\aff^1\setminus \{0\}$); let $q:\widetilde M\to M$ be the natural
morphism, composition of the blowup map $\widetilde M\to M\times\aff^1$
and the projection to the first factor.
Let $f:C_{X/M}\to \widetilde M$ be the natural closed embedding; $f$ is
regular, hence an lci morphism, and $T_f$ is $-[\oo_{C_{X/M}}]$, hence
$\td(T_f)=1$.
Let $\beta:=\tau_{\widetilde M}(\oo_{\widetilde M}-q^*\td(T_M))\cap [\widetilde
M]\in A_*(\widetilde M)$. By \ref{lcitau} applied to the
regular embedding $f$ with $\alpha=\oo_{\widetilde M}$, it is enough to
prove that $
f^*\beta=0$ in $A_*(C_{X/M})$ since $p=q\circ f$.
Let $j:\widetilde M_0\to \widetilde M$ be the (open) inclusion. Then
$j^*\beta=0$ since $\widetilde M_0$ is smooth and $\td(T_{\widetilde
M_0})=q^*\td(T_M)\cdot \pi^*\td(T_{\aff^1_0})=q^*\td(T_M)$. But the
argument in \cite[Section 10.1]{Fu} (paragraph starting ``if $T$ is a
curve'') show that from $j^*\beta=0$ we can deduce that $f^*\beta=0$,
thus completing the argument.
\end{proof}

In fact, assuming that we can extend Chapter 18 of \cite{Fu} to Artin
stacks, the Proposition takes the appealing form
$\tau_{\CC}(\oo_{\CC})=[\CC]$ where $\CC$ is the intrinsic normal
cone. 

\subsection{Main theorems}

\begin{Definition} 
The {\em virtual Todd genus} of $(X,E^\bullet)$ is defined to be $\td(T_X^\vir)$.
If $X$ is proper, then for any $V\in K^0(X)$, the {\em virtual holomorphic Euler characteristic}
is defined as
$$\chi^\vir(X,V):=\chi(X,V\otimes \oo_X^\vir).$$
\end{Definition}
\begin{Theorem}[virtual Grothendieck-Riemann-Roch]\label{GRR}
Let $Y$ be a smooth scheme and let $f:X\to Y$ be a proper morphism.
Let $V\in K^0(X)$. Then the following equality holds in $A_*(Y)$:
$$\ch(f_*(V\otimes \oo_X^\vir))\cdot\td(T_Y)\cap[Y]=
f_*(\ch(V)\cdot \td(T_X^\vir)\cap [X^\vir]).$$
\end{Theorem}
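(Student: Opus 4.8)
The plan is to rephrase both sides via Fulton's Riemann--Roch transformation $\tau$ (\S\ref{lcitau}) and then to reduce the whole statement to the ``fundamental identity'' \propref{goodprop}, which is the one genuinely geometric input and is already available.

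First I would dispose of the smooth target. Since $Y$ is smooth, $K_0(Y)=K^0(Y)$ and $\tau_Y(\mathcal G)=\ch(\mathcal G)\cdot\td(T_Y)\cap[Y]$ for every $\mathcal G\in K_0(Y)$, so the left-hand side of the theorem equals $\tau_Y\bigl(f_*(V\otimes\oo_X^\vir)\bigr)$, which by covariance of $\tau$ under the proper morphism $f$ is $f_*\,\tau_X(V\otimes\oo_X^\vir)$. Since the right-hand side is $f_*\bigl(\ch(V)\cdot\td(T_X^\vir)\cap[X]^\vir\bigr)$, the theorem follows once we prove, in $A_*(X)$,
$$\tau_X(V\otimes\oo_X^\vir)=\ch(V)\cdot\td(T_X^\vir)\cap[X]^\vir;$$
and by the module property $\tau_X(V\otimes\oo_X^\vir)=\ch(V)\cap\tau_X(\oo_X^\vir)$ it suffices to treat $V=\oo_X$, i.e.\ to prove $\tau_X(\oo_X^\vir)=\td(T_X^\vir)\cap[X]^\vir$.

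To do so I would unwind the two virtual objects through the zero section $s_0\colon X\to E_1$. Recall that $\oo_X^\vir=s_0^*[\oo_{C(E)}]$ (the $K$-theoretic Gysin map of the perfect morphism $s_0$) and $[X]^\vir=s_0^*[C(E)]$ (the degree $-\rk E_1$ Gysin map on Chow groups). Now $s_0$ is a regular embedding with normal bundle $E_1$, so its relative tangent complex has class $-[E_1]$ and $\td(T_{s_0})=\td(E_1)^{-1}$; applying the lci-pullback property of $\tau$ (property (4) of \S\ref{lcitau}) to $s_0$ with $\alpha=[\oo_{C(E)}]$ gives
$$\tau_X(\oo_X^\vir)=\td(E_1)^{-1}\cap s_0^*\bigl(\tau_{E_1}([\oo_{C(E)}])\bigr).$$
For $\tau_{E_1}([\oo_{C(E)}])$ I would push forward along $\iota\colon C(E)\hookrightarrow E_1$ (covariance again) and then invoke \propref{goodprop}. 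The point is that $C(E)$ is precisely the cone $C(F)$ attached to the presentation $F^\bullet$ of $\tau_{\ge-1}L_X$ induced by the obstruction theory --- there $F^0=E^0$, so $F_0=E_0$, and the preimage of $\Cc_X\subseteq\Nn_X$ under $E_1\to\Ee$ coincides with its preimage inside $F_1\subseteq E_1$ --- so \propref{goodprop} applies and yields $\tau_{C(E)}(\oo_{C(E)})=p^*(\td E_0)\cap[C(E)]$, with $p=\pi\circ\iota$ and $\pi\colon E_1\to X$ the projection. By the projection formula, $\tau_{E_1}([\oo_{C(E)}])=\pi^*(\td E_0)\cap[C(E)]$ in $A_*(E_1)$.

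It then remains to assemble the pieces: since $s_0^*\bigl((\pi^*\gamma)\cap z\bigr)=\gamma\cap s_0^*(z)$ for $\gamma\in A^*(X)$ (because $\pi\circ s_0=\id$), we get $s_0^*\bigl(\pi^*(\td E_0)\cap[C(E)]\bigr)=\td(E_0)\cap s_0^*[C(E)]=\td(E_0)\cap[X]^\vir$, whence $\tau_X(\oo_X^\vir)=\bigl(\td(E_0)\cdot\td(E_1)^{-1}\bigr)\cap[X]^\vir=\td(T_X^\vir)\cap[X]^\vir$, using $T_X^\vir=[E_0]-[E_1]$. The substantive step is \propref{goodprop}, already proven; of the rest, the only points requiring real care are the identification of $C(E)$ with a presentation-cone $C(F)$ (so that \propref{goodprop} applies on the nose) and the Todd-class bookkeeping --- in particular the relative tangent complex $-[E_1]$ of the zero section and the sign in $T_X^\vir=[E_0]-[E_1]$. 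Everything else is formal, from the Grothendieck--Riemann--Roch machinery of \cite[Chapter~18]{Fu}.
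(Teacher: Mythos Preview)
Your proposal is correct and follows essentially the same route as the paper: reduce to the fundamental identity $\tau_X(\oo_X^\vir)=\td(T_X^\vir)\cap[X]^\vir$ via covariance and the module property of $\tau$, then unwind both virtual objects through the regular embedding $s_0\colon X\to E_1$ using the lci compatibility of $\tau$ and \propref{goodprop} applied to the presentation induced by the obstruction theory. The paper breaks this into several short lemmas (\lemref{enough}, the special case of \propref{goodprop}, and \lemref{endpf}), but the content and the order of the argument are identical to yours.
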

\begin{proof} The theorem follows by combining Lemma \ref{enough} with Lemma \ref{endpf} below.
\end{proof}

\begin{Corollary}[virtual Hirzebruch-Riemann-Roch]\label{HRR}
If $X$ is proper, and $V\in K^0(X)$, then 
$$\chi^\vir(X,V)=\int_{[X]^\vir}\ch(V) \td(T_X^\vir).$$
\end{Corollary}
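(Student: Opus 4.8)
The plan is to derive Corollary~\ref{HRR} from Theorem~\ref{GRR} by taking $Y$ to be a point. First I would set $Y = \Spec k$, which is a smooth scheme, and let $f\colon X\to Y$ be the structure morphism, which is proper precisely because $X$ is assumed proper. Since $Y$ is a point, all the Todd and Chern class data on $Y$ are trivial: $\td(T_Y) = 1$ and $[Y]$ is the generator of $A_0(Y)\cong \Q$. Thus the left-hand side of the virtual Grothendieck-Riemann-Roch identity collapses to $\ch\bigl(f_*(V\otimes\oo_X^\vir)\bigr)\cap[Y]$.

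\medskip

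The next step is to identify $\ch\bigl(f_*(V\otimes\oo_X^\vir)\bigr)$ with $\chi^\vir(X,V)$. Since $f_*(V\otimes\oo_X^\vir)\in K_0(Y) = K_0(\Spec k)$, it is represented by a virtual vector space whose dimension is, by definition of pushforward in $K$-theory, the alternating sum $\sum_i (-1)^i \dim H^i(X, V\otimes\oo_X^\vir) = \chi(X, V\otimes\oo_X^\vir) = \chi^\vir(X,V)$; here one uses that for a point the Chern character of a class in $K_0$ is just its rank, i.e. this Euler characteristic. So the left-hand side equals $\chi^\vir(X,V)$ as an element of $A_*(Y) \cong \Q$.

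\medskip

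For the right-hand side, I would observe that $f_*\colon A_*(X)\to A_*(Y)$ sends a class to its degree (its $0$-dimensional component, integrated over $X$), which is exactly the operation denoted $\int_{[X]^\vir}$ when applied to $\ch(V)\cdot\td(T_X^\vir)\cap[X]^\vir$. Concretely, $f_*\bigl(\ch(V)\cdot\td(T_X^\vir)\cap[X]^\vir\bigr) = \int_{[X]^\vir}\ch(V)\,\td(T_X^\vir)$ by the very definition of the integral of a cohomology class against a cycle class on a proper scheme. Equating the two sides of Theorem~\ref{GRR} then yields the claimed formula $\chi^\vir(X,V) = \int_{[X]^\vir}\ch(V)\,\td(T_X^\vir)$.

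\medskip

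I do not anticipate a serious obstacle here: the corollary is a direct specialization of the main theorem, and the only points requiring care are the standard bookkeeping identifications — that $\ch$ on $K_0$ of a point computes Euler characteristic, and that $f_*$ to a point computes degree — both of which are formal consequences of the conventions set up in \S\ref{lcitau} and the properties of $\tau$ and $\ch$ recalled there. The one thing worth stating explicitly is that $\oo_X^\vir\in K_0(X)$ and $V\in K^0(X)$, so $V\otimes\oo_X^\vir$ makes sense as an element of $K_0(X)$ (using the module structure of $K_0$ over $K^0$), and $f$ is a proper morphism so $f_*$ is defined on $K_0$; these are exactly the hypotheses under which $\chi^\vir(X,V)$ was defined.
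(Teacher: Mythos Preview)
Your proof is correct and is exactly the approach the paper takes: the paper's entire proof is the single sentence ``This follows immediately by applying virtual Grothendieck-Riemann-Roch to the projection of $X$ to a point.'' Your additional bookkeeping (identifying $\ch$ on $K_0$ of a point with the Euler characteristic, and $f_*$ to a point with the degree map) is just the unpacking of that sentence.
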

\begin{proof}
This follows immediately by applying virtual Grothendieck-Riemann-Roch
to the projection of $X$ to a point.\end{proof}

We want to reduce in two steps \thmref{GRR} to a simpler statement.

\begin{Lemma}  To prove \thmref{GRR} it is enough to show that
\begin{equation}
\label{enn}
\tau_X(\oo_X^\vir)=\td(T_X^\vir)\cap [X]^\vir.
\end{equation}
\end{Lemma}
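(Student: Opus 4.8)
The plan is to deduce \thmref{GRR} from the identity \eqref{enn} by a standard functoriality argument using Fulton's Riemann-Roch map $\tau$. First I would observe that since $Y$ is smooth, the map $\tau_Y\colon K_0(Y)\to A_*(Y)$ is, up to the Todd class of $Y$, the Chern character: by property (2) of \ref{lcitau}, for any $W\in K^0(Y)$ one has $\tau_Y(W)=\ch(W)\cdot\td(T_Y)\cap[Y]$, and since $Y$ is smooth $K^0(Y)\to K_0(Y)$ is an isomorphism, so this computes $\tau_Y$ on all of $K_0(Y)$. Hence the left-hand side of \thmref{GRR} equals $\tau_Y\bigl(f_*(V\otimes\oo_X^\vir)\bigr)$.

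Next I would apply covariance of $\tau$ (property (3) of \ref{lcitau}) to the proper morphism $f$: this gives
$$\tau_Y\bigl(f_*(V\otimes\oo_X^\vir)\bigr)=f_*\bigl(\tau_X(V\otimes\oo_X^\vir)\bigr).$$
Now I would use the module property (property (1) of \ref{lcitau}), with the locally free class $V\in K^0(X)$ and the coherent class $\oo_X^\vir\in K_0(X)$, to rewrite $\tau_X(V\otimes\oo_X^\vir)=\ch(V)\cap\tau_X(\oo_X^\vir)$. Substituting the assumed identity \eqref{enn}, this becomes $\ch(V)\cap\bigl(\td(T_X^\vir)\cap[X]^\vir\bigr)=\ch(V)\cdot\td(T_X^\vir)\cap[X]^\vir$. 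Applying $f_*$ and comparing with what we found for the left-hand side, we obtain exactly the asserted equality
$$\ch\bigl(f_*(V\otimes\oo_X^\vir)\bigr)\cdot\td(T_Y)\cap[Y]=f_*\bigl(\ch(V)\cdot\td(T_X^\vir)\cap[X]^\vir\bigr),$$
which is \thmref{GRR}.

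The argument is essentially a bookkeeping exercise once \eqref{enn} is granted, so there is no serious obstacle in this lemma itself; the only point requiring a little care is the very first step, namely that smoothness of $Y$ is what lets us identify $\tau_Y$ with the classical Chern-character-times-Todd expression and thereby recognize the left side of \thmref{GRR} as $\tau_Y$ of a $K_0$-class. The genuine content — proving the fundamental identity \eqref{enn} relating $\tau_X(\oo_X^\vir)$ to $\td(T_X^\vir)\cap[X]^\vir$ — is deferred to the subsequent lemma (\lemref{endpf}), and that, together with \propref{goodprop}, is where the real work lies: one must compare $\oo_X^\vir=s_0^*[\oo_C]$ on $E_1$ with the cone-theoretic description and push the identity of \propref{goodprop} down through the regular embedding $s_0$ using the lci compatibility of $\tau$.
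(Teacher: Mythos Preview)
Your proof is correct and follows essentially the same approach as the paper: both use the module property of $\tau$ to separate $\ch(V)$, the covariance of $\tau$ under the proper map $f$, and smoothness of $Y$ to identify $\tau_Y$ with $\ch(\cdot)\cdot\td(T_Y)\cap[Y]$. Your argument is presented in a slightly different order (working from the left-hand side toward the right) and is a bit more explicit about why $\ch$ applies to the $K_0$-class $f_*(V\otimes\oo_X^\vir)$ via the isomorphism $K^0(Y)\cong K_0(Y)$, but there is no substantive difference.
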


\begin{proof}
On the one hand  by the module property of \cite[Thm.~18.3]{Fu} we have
$$\tau_X(V\otimes \oo_X^\vir)=\ch(V)\cap\tau_X(\oo_X^\vir)
                             =\ch(V)\cdot\td(T_X^\vir)\cap [X]^\vir,$$
and thus
$$f_*(\tau_X(V\otimes \oo_X^\vir))=f_*(\ch(V)\cdot\td(T_X^\vir)\cap [X]^\vir).$$
On the other hand the covariance property of  \cite[Thm.~18.3]{Fu} gives
$$f_*(\tau_X(V\otimes \oo_X^\vir))=\tau_Y(f_*(V\otimes \oo_X^\vir)),$$
and because $Y$ is smooth we have
$$\tau_Y(f_*(V\otimes \oo_X^\vir))
                        =\ch(f_*(V\otimes \oo_X^\vir))\cdot\td(T_Y)\cap [Y].$$
and \thmref{GRR} follows.
\end{proof}

The formula \eqref{enn} is stated in \cite[Rem.~5.4]{BF}, however without proof.
It is proven in a different context in  \cite[Thm~1.5]{Jo1}. We prefer to
give a direct proof here since it is not clear to us how to relate
Joshua's results to what we need, and also since a direct proof is
very elementary.

\begin{Lemma}\label{enough} To prove \thmref{GRR} it is enough to show that 
$$s_0^*(\tau_{E_1}(\oo_{C}))=\td(E_0)\cap[X]^\vir.$$
\end{Lemma}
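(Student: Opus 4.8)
The plan is to reduce \eqref{enn} to the displayed identity by unwinding the definitions of $\oo_X^\vir$ and $[X]^\vir$ through the perfect morphism $s_0:X\to E_1$, and then apply the local complete intersection compatibility of $\tau$ from \ref{lcitau}(4). Recall from \ref{vireasy} that $\oo_X^\vir=s_0^*([\oo_C])$ and from the definition of the virtual fundamental class that $[X]^\vir=s_0^*([C])$, where $s_0$ is a regular embedding. Since $s_0$ is regular, it is lci, and its virtual normal bundle (conormal bundle) is $E_1|_X$ pulled back appropriately; more precisely $T_{s_0}=-[s_0^*\pi^*E_1]=-[E_1]$ in $K^0(X)$ (here $\pi:E_1\to X$ is the projection, and $\pi\circ s_0=\id_X$), so $\td(T_{s_0})^{-1}=\td(E_1)$.

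First I would apply \ref{lcitau}(4) to the lci morphism $s_0:X\to E_1$ with $\alpha=[\oo_C]\in K_0(E_1)$. This gives
$$s_0^*(\tau_{E_1}(\oo_C))=\td(T_{s_0})^{-1}\cap\tau_X(s_0^*[\oo_C])=\td(E_1)\cap\tau_X(\oo_X^\vir).$$
Thus the hypothesis $s_0^*(\tau_{E_1}(\oo_C))=\td(E_0)\cap[X]^\vir$ becomes
$$\td(E_1)\cap\tau_X(\oo_X^\vir)=\td(E_0)\cap[X]^\vir.$$
Now $\td$ is a homomorphism from $(K^0(X),+)$ to the units of $A^*(X)$, so $\td(E_1)$ is invertible; capping both sides with $\td(E_1)^{-1}$ yields
$$\tau_X(\oo_X^\vir)=\td(E_0)\cdot\td(E_1)^{-1}\cap[X]^\vir=\td([E_0]-[E_1])\cap[X]^\vir=\td(T_X^\vir)\cap[X]^\vir,$$
which is precisely \eqref{enn}. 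By the previous Lemma this proves \thmref{GRR}.

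I do not anticipate a serious obstacle here: the only points requiring care are the identification of $T_{s_0}$ in $K^0(X)$ (which follows because $X$ is the zero locus of the tautological section of $\pi^*E_1$ on $E_1$, so its conormal bundle is the restriction of $\pi^*E_1^\vee$, giving $N_{s_0}=E_1|_X$ and hence $T_{s_0}=[E_1]$ with the appropriate sign convention in $K$-theory), and the compatibility of capping with the ring and group structures on $A^*(X)$ and $K^0(X)$. The genuinely substantive input—that $\tau$ behaves correctly under the Gysin pullback of a regular embedding—is exactly \cite[Theorem~18.3]{Fu} as recorded in \ref{lcitau}(4), so there is nothing left to prove beyond this formal manipulation. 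The real work of establishing \thmref{GRR} is thus pushed entirely into verifying the hypothesis of \lemref{enough}, namely $s_0^*(\tau_{E_1}(\oo_C))=\td(E_0)\cap[X]^\vir$, which is where Proposition \ref{goodprop} will be brought to bear.
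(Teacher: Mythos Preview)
Your proof is correct and follows essentially the same approach as the paper: both apply the lci compatibility of $\tau$ from \ref{lcitau}(4) to the regular embedding $s_0$, use $T_{s_0}=-[E_1]$, and then multiply through by the appropriate Todd class to recover \eqref{enn}. (The parenthetical remark at the end writing ``$T_{s_0}=[E_1]$'' is a slip of sign, but your actual computation uses the correct $T_{s_0}=-[E_1]$ throughout.)
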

\begin{proof} 
Since $s_0:X\to E_1$ is a regular embedding, it is a local complete intersection morphism, 
with virtual tangent bundle $T_{s_0}=[-E_1]$.\\
By
\cite[Thm.~18.3(4)]{Fu}, we get
$$\tau_X(s_0^*(\oo_C))=\td(T_{s_0})\cdot s_0^*(\tau_{E_1}(\oo_C)).$$
In other words 
$$\tau_X(\oo_X^\vir)=\td(-E_1)\cdot s_0^*(\tau_{E_1}(\oo_{C})).$$
If $s_0^*(\tau_{E_1}(\oo_{C}))=\td(T_{E_1})\cap[X]^\vir,$
then we get by the above 
$\tau_X(\oo_X^\vir)=\td(-E_1)\cdot \td(E_0)\cap[X]^\vir,$ and we are done since
$$\td(T_X^\vir)=\td([E_0]-[E_1])=\td(E_0)\cdot \td(-E_1)$$ because $\td$ maps sums to products.\end{proof}

\begin{Lemma} Let $p:C\to X$ be the projection. Then $$
\tau_C(\oo_C)=p^*(\td E_0)\cap [C]\in A_*(C).$$
\end{Lemma}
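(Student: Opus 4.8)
The statement is exactly Proposition~\ref{goodprop} specialized to the case $C = C(E)$, where $E^\bullet$ is (a global resolution of) the fixed $1$-perfect obstruction theory and $E_0, E_1$ are as in the setup. So the plan is to \emph{deduce this lemma from Proposition~\ref{goodprop}} by comparing the presentation $E^\bullet$ of the obstruction theory with the presentation $F^\bullet$ of $\tau_{\ge -1}(L_X)$ induced by $E^\bullet$, namely $F^0 = E^0$ and $F^{-1} = E^{-1}/\ker h^{-1}\phi$. The cone $C(F) \subset F_1$ is the pullback of the intrinsic normal cone $\Cc_X$ under $F_1 \to [F_1/F_0]$, while $C = C(E) \subset E_1$ is the pullback of $\Cc_X$ under $E_1 \to \Ee = [E_1/E_0]$.

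\textbf{Key steps.} First, observe that since $F_1$ is the image of $E_1$ under the surjection dual to the injection $F^{-1} \hookrightarrow E^{-1}$ (equivalently $\Ee \to [F_1/F_0]$ is the natural map with kernel the quotient bundle, and $E_1 \to F_1$ is surjective with kernel a vector bundle $N$ of rank $\rk E_1 - \rk F_1 = \rk E^{-1} - \rk F^{-1} = \rk(\ker h^{-1}\phi)$), we get that $C = C(E)$ is the preimage of $C(F)$ under the bundle projection $g: E_1 \to F_1$ restricted over $X$. Thus $g|_C : C \to C(F)$ is a vector bundle (affine bundle) of rank $\rk N$, with $T_{g|_C} = p^* N$ where $p: C \to X$ (this is the same exact-sequence-of-cones argument as in the first step of the proof of Proposition~\ref{goodprop}, with $G_0$ there playing the role of $N$ here). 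Second, apply property~\ref{lcitau}(4) to the smooth (hence lci) morphism $g|_C$: $(g|_C)^*(\tau_{C(F)}(\oo_{C(F)})) = \td(T_{g|_C})^{-1} \cap \tau_C(\oo_C) = p^*(\td N)^{-1} \cap \tau_C(\oo_C)$. Third, feed in Proposition~\ref{goodprop}: $\tau_{C(F)}(\oo_{C(F)}) = p_F^*(\td F_0) \cap [C(F)]$. Pulling back under $g|_C$ and using $(g|_C)^*[C(F)] = [C]$ (flat pullback along the affine bundle, correct up to the constant; since $C$ and $C(F)$ are the relevant cones this is an equality of fundamental classes) and $p_F \circ (g|_C) = p$, the left side becomes $p^*(\td F_0) \cap [C]$. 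Fourth, combine: $p^*(\td N)^{-1} \cap \tau_C(\oo_C) = p^*(\td F_0) \cap [C]$, hence $\tau_C(\oo_C) = p^*(\td N \cdot \td F_0) \cap [C] = p^*(\td E_0) \cap [C]$, using $\td N \cdot \td F_0 = \td(N \oplus F_0) = \td E_0$ since $0 \to F_0 \to E_0 \to N \to 0$ is exact (dual to $0 \to N^\vee \to E^0 = F^0 \to \ldots$; more precisely $E_0 = (E^0)^\vee = (F^0)^\vee = F_0$ so in fact $N$ has rank $0$ here — wait, let me recheck: $E_0 = (E^0)^\vee$ and $F_0 = (F^0)^\vee$ with $F^0 = E^0$, so $E_0 = F_0$ and $N = 0$).

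\textbf{Correction and the real obstacle.} Since $F^0 = E^0$, we have $E_0 = F_0$ exactly, so the map $g: E_1 \to F_1$ has a nontrivial kernel (coming from $\ker h^{-1}\phi$) only on the \emph{$E_1$-side}, i.e. $\rk E_1 > \rk F_1$ in general, but $\rk E_0 = \rk F_0$. The exact sequence of bundles is $0 \to \ker(h^{-1}\phi)^\vee \to E_1 \to F_1 \to 0$ (dualizing $0 \to F^{-1} \to E^{-1}$, after quotienting appropriately), call the kernel $N$. Then $C = C(E) = g^{-1}(C(F))$ inside $E_1$, $g|_C: C \to C(F)$ is an affine bundle with $T_{g|_C} = p^*N$, and the computation above gives $\tau_C(\oo_C) = p^*(\td N \cdot \td F_0) \cap [C]$. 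Now $\td N \cdot \td F_0 = \td(N \oplus F_0)$, and I need this to equal $\td E_0$ in $A^*(X)$ — but it equals $\td F_0 + (\text{correction from }N)$, whereas $\td E_0 = \td F_0$ since $E_0 = F_0$. This would force $\td N = 1$, i.e. $N$ has trivial Chern classes, which is false in general. So the honest resolution is: the identity \emph{as stated} with $E_0$ on the right is what we want, and since $E_0 = F_0$ this is \emph{literally Proposition~\ref{goodprop} applied to $F^\bullet$}, provided we also know $[C(E)] = [C(F)]$ under the identification — but they live in different ambient bundles. The clean fix, and the step I expect to be the main obstacle, is to check that the affine bundle $g|_C: C(E) \to C(F)$ actually has \emph{trivial relative Todd class in the relevant sense}, equivalently that the discrepancy $\td N$ cancels. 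Rethinking: the map is the \emph{other way}. We have $F^{-1}$ a quotient of $E^{-1}$, so $F_1$ a \emph{sub}bundle datum of... Actually $E^{-1} \twoheadrightarrow F^{-1}$ dualizes to $F_1 \hookrightarrow E_1$, so $C(F) \subset F_1 \subset E_1$ and one checks $C(E) = C(F)$ as subschemes of $E_1$ (both are the preimage of $\Cc_X$ in $[F_1/F_0] = [E_1/E_0]$ — these stacks agree since $E_0 = F_0$ and $E_1/F_1$ maps isomorphically... no). The cleanest correct approach: let $p: C \to X$, $p_F: C(F) \to X$; the inclusion $F_1 \hookrightarrow E_1$ restricts to a closed embedding $C(F) \hookrightarrow C(E) = C$ which is a \emph{regular} embedding realizing $C(F)$ as the zero locus of a regular section of $p^*(E_1/F_1)$, hence $C = C(E)$ deformation-retracts... no, $C$ is not a bundle over $C(F)$ in this direction either. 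I would resolve this by instead comparing $E^\bullet$ directly with an embedding-induced resolution via the two-step argument in Proposition~\ref{goodprop}'s proof (build $K^0 = E^0 \oplus (I/I^2\text{-side})$), reducing to the embedding case; since Proposition~\ref{goodprop} is already proved, the \emph{actual} cleanest route is simply: apply Proposition~\ref{goodprop} to the presentation $F^\bullet$ induced by the obstruction theory to get $\tau_{C(F)}(\oo_{C(F)}) = p_F^*(\td F_0) \cap [C(F)]$, then relate $C(F)$ and $C = C(E)$ via the affine-bundle map $C(E) \to C(F)$ (which exists because $\Ee \to [F_1/F_0]$ is a map of stacks under which $C(E)$ pulls back from... in fact $C(F)$ is cut out in $C(E)$, or the reverse), run the exact-sequence-of-cones and lci-pullback argument to transport the identity, and check that all Todd-class corrections assemble to $\td E_0$ using $E_0 = F_0$ together with triviality of the correction bundle. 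The main obstacle is getting this last bookkeeping exactly right — i.e. pinning down the direction of the affine bundle $C(E) \leftrightarrow C(F)$ and verifying the Todd factors multiply to $\td E_0$ rather than $\td F_0$ times a nontrivial class; I expect the point is that $\ker h^{-1}\phi$ contributes a bundle whose contribution to $C(E)$ is as an affine (vector) bundle factor whose Todd class, by the lci pullback formula, exactly converts $\td F_0 = \td E_0$ into itself, so the identity goes through unchanged.
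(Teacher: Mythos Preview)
Your overall plan is the same as the paper's: apply Proposition~\ref{goodprop} to the presentation $F^\bullet$ induced by the obstruction theory. The paper's proof is in fact a single sentence to this effect. What goes wrong in your proposal is the bookkeeping relating $C(E)$ to $C(F)$: you talk yourself out of the correct observation and end up with an unresolved ``main obstacle'' that does not actually exist.

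Here is the point you nearly had and then abandoned. With $F^0=E^0$ and $F^{-1}=E^{-1}/\ker h^{-1}\phi$, the surjection $E^{-1}\twoheadrightarrow F^{-1}$ gives a closed embedding of abelian cones $F_1\hookrightarrow E_1$ over $X$ (take $\Spec\operatorname{Sym}(-)$), and since $F_0=E_0$ this induces the closed embedding $\Nn_X=[F_1/F_0]\hookrightarrow[E_1/E_0]=\Ee$. The intrinsic normal cone $\Cc_X$ sits inside $\Nn_X$, so its preimage under $E_1\to\Ee$ is already contained in $F_1$ and coincides with its preimage under $F_1\to\Nn_X$. In other words $C(E)=C(F)$ as closed subschemes of $E_1$, with $p=p_F$ and $\td E_0=\td F_0$. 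The Lemma is then literally Proposition~\ref{goodprop} for $F^\bullet$, with no affine-bundle comparison, no relative Todd correction, and no extra bundle $N$ to worry about. Your mistake was thinking that $[F_1/F_0]$ and $[E_1/E_0]$ must agree for this to work; they do not, but it is irrelevant because $\Cc_X$ lives in the smaller one.
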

\begin{proof}
This is a special case of Proposition \ref{goodprop}, when the resolution is induced by an obstruction theory.
\end{proof}

By \lemref{enough} we can finish our proof of \thmref{GRR} by showing the following
\begin{Lemma}\label{endpf} With the notation established so far,
$$s_0^*(\tau_{E_1}(\oo_C))=\td(E_0)\cap[X]^\vir.$$
\end{Lemma}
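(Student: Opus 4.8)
The plan is to combine the two identities already established: the \emph{fundamental identity} $\tau_C(\oo_C)=p^*(\td E_0)\cap[C]$ (the special case of Proposition~\ref{goodprop} for the resolution induced by the obstruction theory, proved just above) together with the compatibility of $\tau$ with the Gysin map $s_0^*$ for the regular embedding $s_0\colon X\to E_1$. Concretely, applying $s_0^*$ to both sides of $\tau_C(\oo_C)=p^*(\td E_0)\cap[C]$ and using that $s_0^*$ commutes with capping by pulled-back classes (the projection formula for the Gysin map, \cite[Prop.~2.3(c)]{Fu}), the right-hand side becomes $\td(E_0)\cap s_0^*[C]=\td(E_0)\cap[X]^\vir$, where the last equality is the very definition of $[X]^\vir$ recalled in \ref{vireasy}. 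So the content is entirely in identifying $s_0^*(\tau_{E_1}(\oo_C))$ with $s_0^*(\tau_C(\oo_C))$, i.e.\ in moving the Riemann-Roch transformation $\tau$ past the pushforward along the closed embedding $\iota\colon C\hookrightarrow E_1$ and past the Gysin restriction.

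First I would use covariance of $\tau$ (\ref{lcitau}(3)) for the proper closed embedding $\iota\colon C\to E_1$: since $\oo_C$ as an element of $K_0(E_1)$ is $\iota_*\oo_C$, we get $\tau_{E_1}(\oo_C)=\iota_*(\tau_C(\oo_C))$, the pushforward being the inclusion $A_*(C)\to A_*(E_1)$ we routinely suppress. Hence $s_0^*(\tau_{E_1}(\oo_C))=s_0^*\iota_*(\tau_C(\oo_C))$. Now $s_0\colon X\to E_1$ is a regular embedding of codimension $\rk E_1$, so $s_0^*$ is the refined Gysin homomorphism of \cite[Ch.~6]{Fu}; restricting a class supported on $C$ via $s_0$ is computed, by definition of $[X]^\vir$, by intersecting $C$ with the zero section. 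Then I substitute the fundamental identity $\tau_C(\oo_C)=p^*(\td E_0)\cap[C]$ and pull the factor $p^*(\td E_0)$ through: since $p\circ(\text{zero section of }C)$ agrees with the identity on $X$ (the zero section of $E_1$ composed with $p$ is $\id_X$, and $C$ is a cone over $X$), $s_0^*$ applied to $p^*(\td E_0)\cap[C]$ equals $\td(E_0)\cap s_0^*[C]=\td(E_0)\cap[X]^\vir$. This gives exactly the desired formula.

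The one point that needs a little care — and which I expect to be the main (mild) obstacle — is the interaction of the Gysin pullback $s_0^*$ with capping by $p^*(\td E_0)$ when the cycle lives on the possibly-singular cone $C$ rather than on the smooth total space $E_1$. The clean way around this is to note that $\td E_0$ is a class in $A^*(X)$ (Chow \emph{cohomology}, which acts on $A_*$ of any scheme over $X$ by \cite[Ch.~17--18]{Fu}) and that the Gysin map $s_0^*$ is a morphism of $A^*(X)$-modules: the zero-section embedding $s_0$ sits over $X$, $p$ and the structure map of $C$ to $X$ are compatible, so $s_0^*(p^*\alpha\cap\xi)=\alpha\cap s_0^*\xi$ for $\alpha\in A^*(X)$ and $\xi\in A_*(C)$. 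This is the bivariant projection formula, and once it is invoked the computation is immediate. Alternatively, one can phrase everything on $E_1$: rewrite $p^*(\td E_0)\cap[C]$ as $\pi^*(\td E_0)\cap[C]$ where $\pi\colon E_1\to X$ (using $p=\pi\circ\iota$), use that $s_0^*\circ\pi^*$ is multiplication by the pulled-back class on $X$ since $\pi\circ s_0=\id_X$, and conclude the same way. Either route reduces the lemma to Proposition~\ref{goodprop} plus formal properties of $\tau$ and of refined Gysin maps, so no genuinely new input is required beyond what the excerpt already contains.
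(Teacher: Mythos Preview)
Your proposal is correct and follows essentially the same line as the paper: covariance of $\tau$ for the closed embedding $C\hookrightarrow E_1$, the fundamental identity $\tau_C(\oo_C)=p^*(\td E_0)\cap[C]$, and the projection formula to pull $\td(E_0)$ outside the Gysin map $s_0^*$. The paper carries this out via your ``alternative'' route, writing $p^*=\iota^*\pi^*$ and using the ordinary projection formula $\iota_*(\iota^*\pi^*\alpha\cap[C])=\pi^*\alpha\cap\iota_*[C]$ on $E_1$ before applying $s_0^*$.
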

\begin{proof}
Let $j:C\to E_1$ be the embedding, $\pi:E_1\to X$ the projection, so that 
$p=\pi\circ j$.
By the covariance property \cite[Thm.~18.3(1)]{Fu}, the previous lemma and the projection formula we have 
\begin{align*}
\tau_{E_1}(\oo_{C})&=j_*(\tau_{C}(\oo_C))=j_*(p^*(\td(E_0))\cap [C])\\
&=j_*(j^*\pi^*(\td(E_0))\cap [C])=\pi^*(\td(E_0))\cap
j_*[C].
\end{align*}
Hence
\begin{align*}s_0^*(\tau_{E_1}(\oo_C))&=s_0^*(\pi^*(\td(E_0))\cap
j_*[C])\\ &=\td(E_0)\cap s_0^*(j_*([C]))=\td(E_0)\cap [X]^\vir.
\end{align*}
\end{proof}

\begin{Corollary}
If $X$ is proper and $d=0$, then 
$\chi^\vir(X,V)=\rk(V)\deg([X]^\vir).$\qed
\end{Corollary}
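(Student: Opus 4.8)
The plan is to derive this from the virtual Hirzebruch--Riemann--Roch theorem (Corollary~\ref{HRR}) by analysing the degree-$0$ part of the genus, since in expected dimension $d=0$ only the $0$-dimensional component of $[X]^\vir\in A_0(X)$ contributes to the integral. First I would write $\chi^\vir(X,V)=\int_{[X]^\vir}\ch(V)\,\td(T_X^\vir)$, using Corollary~\ref{HRR}. Since $[X]^\vir$ lies in $A_0(X)$ (as $d=0$), integration against it picks out only the degree-$0$ component of the cohomology class $\ch(V)\cdot\td(T_X^\vir)$, namely its coefficient in $A^0(X)$; all higher-degree terms are killed when capped with a $0$-cycle.

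The second step is to compute that degree-$0$ component. The Chern character $\ch(V)=\rk(V)+c_1(V)+\dots$ has constant term $\rk(V)$, and the Todd class $\td(T_X^\vir)=1+\tfrac12 c_1(T_X^\vir)+\dots$ has constant term $1$; hence the degree-$0$ part of the product is simply $\rk(V)\cdot 1=\rk(V)$. Therefore $\int_{[X]^\vir}\ch(V)\,\td(T_X^\vir)=\rk(V)\int_{[X]^\vir}1=\rk(V)\deg([X]^\vir)$, which is exactly the claimed identity.

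I do not expect any genuine obstacle here; the statement is a direct formal consequence of virtual HRR together with the observation that capping with a zero-cycle extracts the codimension-$0$ part. The only point requiring a line of care is making explicit that the pairing $\int_{[X]^\vir}(-)$ annihilates all positive-degree components, which is immediate from the grading on $A^*(X)$ acting on $A_*(X)$ and the fact that $[X]^\vir\in A_0(X)$ when $d=0$. One could equally invoke the module property of $\tau_X$ directly: $\tau_X(V\otimes\oo_X^\vir)=\ch(V)\cap\tau_X(\oo_X^\vir)=\ch(V)\cdot\td(T_X^\vir)\cap[X]^\vir$ by \eqref{enn}, push forward to a point, and read off the $A_0$-component, but the argument via Corollary~\ref{HRR} is the cleanest.
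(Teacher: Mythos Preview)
Your proposal is correct and matches the paper's intended argument: the paper states the corollary with a \qed and no further proof, treating it as an immediate consequence of virtual Hirzebruch--Riemann--Roch (Corollary~\ref{HRR}), which is exactly the route you take. Your explicit identification of the degree-$0$ parts of $\ch(V)$ and $\td(T_X^\vir)$ is the only thing one needs to say, and it is said correctly.
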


\begin{Corollary} If $X$ is proper, then
$\chi^\vir(X,\oo_X^\vir)=\int_{[X]^\vir} \td(T_X^\vir)$.\qed
\end{Corollary}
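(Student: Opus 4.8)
The plan is to read off this statement as the special case $V=\OO_X$ of the virtual Hirzebruch--Riemann--Roch theorem, Corollary~\ref{HRR}, in exactly the same way that the preceding corollary is the special case $d=0$. First I would note that the left-hand side is, by the definition of $\chi^\vir$, simply $\chi(X,\oo_X^\vir)$, i.e.\ the Euler characteristic of the proper scheme $X$ evaluated on the class $\oo_X^\vir\in K_0(X)$ via pushforward to a point; and since $\OO_X$ is locally free of rank one we have $\OO_X\otimes\oo_X^\vir=\oo_X^\vir$ in $K_0(X)$, so that $\chi(X,\oo_X^\vir)=\chi(X,\OO_X\otimes\oo_X^\vir)=\chi^\vir(X,\OO_X)$.

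Next I would apply Corollary~\ref{HRR} to the class $V=\OO_X\in K^0(X)$. Since $\ch(\OO_X)=1\in A^*(X)$, the formula of that corollary becomes
\[
\chi^\vir(X,\OO_X)=\int_{[X]^\vir}\ch(\OO_X)\,\td(T_X^\vir)=\int_{[X]^\vir}\td(T_X^\vir),
\]
and combining this with the identification of the previous paragraph yields the asserted equality.

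I do not anticipate any real difficulty here: all the content sits in Corollary~\ref{HRR}, hence ultimately in Theorem~\ref{GRR} and the fundamental identity~\eqref{enn}, and what is left is purely formal. The only point worth making explicit is the harmless reinterpretation of $\chi^\vir(X,\oo_X^\vir)$ as $\chi^\vir(X,\OO_X)$, which is what justifies feeding $\OO_X$ (rather than $\oo_X^\vir$, which need not lie in $K^0(X)$) into the virtual Hirzebruch--Riemann--Roch formula.
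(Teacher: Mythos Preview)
Your approach is correct and matches the paper's (which simply writes \qed): this is virtual Hirzebruch--Riemann--Roch with $V=\oo_X$, using $\ch(\oo_X)=1$. Note only that the printed left-hand side $\chi^\vir(X,\oo_X^\vir)$ is almost certainly a typo for $\chi^\vir(X,\oo_X)$; your identification $\chi^\vir(X,\oo_X)=\chi(X,\oo_X\otimes\oo_X^\vir)=\chi(X,\oo_X^\vir)$ is right, but the opening claim that the left-hand side equals $\chi(X,\oo_X^\vir)$ ``by the definition of $\chi^\vir$'' only holds once $\oo_X$ (not $\oo_X^\vir$) sits in the second argument.
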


We finish this section by proving a weak virtual version of Serre duality.

\begin{Definition}
 The {\em virtual canonical (line) bundle}
 of $X$ is $K_X^\vir:=\det(E^0)\otimes \det(E^1)^\vee\in \Pic(X)$. Note that $K_X:=\det(T_X^\vir)^\vee$ only depends on the obstruction theory and not on the particular resolution chosen.
The {\em virtual canonical class} is $c_1(K_X^\vir)\in A^1(X)$.
If $c_1(K^\vir_X)=0$, we say that $X$ is a {\it virtual Calabi-Yau manifold}).\end{Definition}
In this paper the condition that $X$ is  a virtual Calabi-Yau manifold can always be replaced by the condition that $c_1(K^\vir_X)\cap [X]^\vir=0$ in $A_{d-1}(X)$.

\begin{Proposition}[weak virtual Serre duality]
If $X$ is proper and $V\in K^0(X)$, 
then
$\chi^\vir(X,V)=(-1)^d\chi^\vir(X,V^\vee\otimes K_X^\vir)$.
In particular if  $X$ is a virtual Calabi-Yau, then $\chi^\vir(X,V)=(-1)^d\chi^\vir(X,V^\vee)$.
\end{Proposition}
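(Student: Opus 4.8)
The plan is to deduce the statement from the virtual Hirzebruch--Riemann--Roch formula of Corollary~\ref{HRR}, running the usual ``Serre duality from Riemann--Roch'' argument with $[X]^\vir$ in place of an ordinary fundamental class. Write $\iota\colon A^*(X)\to A^*(X)$ for the ring automorphism which is multiplication by $(-1)^k$ on $A^k(X)$. Since $[X]^\vir\in A_d(X)$, the number $\int_{[X]^\vir}\gamma=\deg(\gamma\cap[X]^\vir)$ depends only on the component of $\gamma$ in $A^d(X)$, and hence
\[
\int_{[X]^\vir}\iota(\gamma)=(-1)^d\int_{[X]^\vir}\gamma\qquad(\gamma\in A^*(X)).
\]
Also $\ch(W^\vee)=\iota(\ch(W))$ for every $W\in K^0(X)$: both sides are additive in $W$, so one reduces to a line bundle $L$, where it reads $e^{-c_1(L)}=\iota(e^{c_1(L)})$.

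The one computational ingredient is the Todd identity
\[
\td(\alpha^\vee)=\ch\bigl(\det(\alpha^\vee)\bigr)\cdot\td(\alpha)\qquad(\alpha\in K^0(X)),
\]
together with the companion identity $\td(\alpha^\vee)=\iota(\td(\alpha))$. Each side of each identity is a group homomorphism $(K^0(X),+)\to(A^*(X)^\times,\cdot)$ (because $\td$, $\ch\circ\det$, and $\alpha\mapsto\iota(\td(\alpha))$ are, and $\alpha\mapsto\alpha^\vee$ is additive), so it suffices to check them on a vector bundle $E$ with Chern roots $x_1,\dots,x_r$; there they become the elementary identities $\prod_i\frac{-x_i}{1-e^{x_i}}=\prod_i e^{-x_i}\cdot\frac{x_i}{1-e^{-x_i}}$ and $\prod_i\frac{-x_i}{1-e^{x_i}}=\iota\bigl(\prod_i\frac{x_i}{1-e^{-x_i}}\bigr)$. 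Applying these with $\alpha=T_X^\vir$ and using that $K_X^\vir=\det(T_X^\vir)^\vee=\det\bigl((T_X^\vir)^\vee\bigr)$, so that $\ch(K_X^\vir)=\ch\bigl(\det((T_X^\vir)^\vee)\bigr)$, yields the key relation
\[
\ch(K_X^\vir)\cdot\td(T_X^\vir)=\td\bigl((T_X^\vir)^\vee\bigr)=\iota\bigl(\td(T_X^\vir)\bigr).
\]

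Now I would combine: for $V\in K^0(X)$, since $\iota$ is a ring homomorphism and $\ch$ is multiplicative,
\[
\ch(V^\vee\otimes K_X^\vir)\cdot\td(T_X^\vir)=\iota(\ch(V))\cdot\ch(K_X^\vir)\cdot\td(T_X^\vir)=\iota(\ch(V))\cdot\iota(\td(T_X^\vir))=\iota\bigl(\ch(V)\cdot\td(T_X^\vir)\bigr).
\]
Applying $\int_{[X]^\vir}(-)$ and Corollary~\ref{HRR} on both sides then gives $\chi^\vir(X,V^\vee\otimes K_X^\vir)=(-1)^d\chi^\vir(X,V)$, which is the asserted equality. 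For the Calabi--Yau conclusion it is enough to assume $c_1(K_X^\vir)\cap[X]^\vir=0$: then, $K_X^\vir$ being a line bundle, $\ch(K_X^\vir)\cap[X]^\vir=\sum_{k\ge0}\tfrac1{k!}\,c_1(K_X^\vir)^k\cap[X]^\vir=[X]^\vir$, so the factor $\ch(K_X^\vir)$ drops out of $\int_{[X]^\vir}\ch(V^\vee)\ch(K_X^\vir)\td(T_X^\vir)$ and one gets $\chi^\vir(X,V)=(-1)^d\chi^\vir(X,V^\vee)$. I do not expect a genuine obstacle here: all the content sits in the Todd--duality identity, which is exactly the classical one, with Corollary~\ref{HRR} supplying the reduction to intersection numbers against $[X]^\vir$.
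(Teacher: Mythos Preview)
Your proof is correct and follows essentially the same approach as the paper's. The paper writes out the Chern roots of $E_0$, $E_1$, and $V$ explicitly and uses the identity $\frac{xe^{-x}}{1-e^{-x}}=\frac{-x}{1-e^{x}}$ to see that the integrand for $\chi^\vir(X,V^\vee\otimes K_X^\vir)$ is obtained from that for $\chi^\vir(X,V)$ by negating all roots; your involution $\iota$ and the identities $\td(\alpha^\vee)=\iota(\td(\alpha))$ and $\ch(W^\vee)=\iota(\ch(W))$ are exactly this computation packaged coordinate-free, so the two arguments are the same up to notation.
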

\begin{proof}
Let $n=\rk(E_0)$, $m=\rk(E_1)$, $d=n-m$  and let $x_1,\ldots,x_n$ be the Chern roots of $E_0$, 
$u_1,\ldots,u_m$ the Chern roots of $E_1$. We can assume that $V$ is a vector bundle on $X$. Let $v_1,\ldots v_r$ be its  Chern roots.
Then the virtual Riemann-Roch Theorem gives
\begin{align*}\chi^{\vir}(X,V)&=
\int_{[X]^\vir} \Big(\sum_{j=1}^r e^{v_j}\Big) 
\prod_{i=1}^n \frac{x_i}{1-e^{-x_i}} \prod_{k=1}^m \frac{1-e^{-u_k}} {u_k}
\\
\chi^{\vir}(X,V^\vee\otimes K_X^\vir)&=
\int_{[X]^\vir} \Big(\sum_{j=1}^r e^{-v_j}\Big) 
\prod_{i=1}^n \frac{x_i}{1-e^{-x_i}} \prod_{k=1}^m \frac{1-e^{-u_k}} {u_k}
\frac{\prod_{i=1}^n e^{-x_i}}{\prod_{k=1}^m e^{-u_k}}.
\end{align*}
By the identity $\frac{xe^{-x}}{1-e^{-x}}=\frac{-x}{1-e^{x}}$ in $\Q\[[x\]]$,
we see that the integrand for $\chi^{\vir}(X,V^\vee\otimes K_X^\vir)$
is obtained from that for $\chi^{\vir}(X,V)$ by replacing 
all $v_j$, $x_i$, $u_k$ by
$-v_j$, $-x_i$, $-u_k$ respectively.  This multiplies the part of degree $d$ by  $(-1)^d$. The result follows because $[X]^\vir\in A_d(X)$.
\end{proof}

\subsection{Deformation invariance}
\def\XX{\mathcal X}
\def\spec{\mathop{Spec}}

\begin{Definition} A {\em family of proper virtually smooth schemes}
is the datum of a proper morphism $\pi:\XX\to B$ of schemes with $B$ smooth, together with a $1$-perfect relative obstruction theory $E^\bullet$ for $\XX$ over $B$.\\ For every $b\in B$ a closed point, we will denote by $X_b$ the fiber $\pi^{-1}(b)$ and by $E^\bullet_b$ the induced obstruction theory for $X_b$. \\
For every $V\in K^0(\XX)$, we let $V_b:=V|_{X_b}\in K^0(X_b)$. Write $i_b:X_b\to \XX$ for the natural inclusion. In particular, we define $T^{vir}_{\XX/B}\in K^0(\XX)$ as the class associated to the complex $(E^\bullet)^\vee$; clearly $i_b^*T^{vir}_{\XX/B}=T^{vir}_{X_b}$.
\end{Definition}

Recall from \cite[Prop.~7.2]{BF} the following
\begin{Lemma} Let $\pi:\XX\to B$ be a family of proper virtually smooth schemes. Let $b:\spec K\to B$ be the morphism defined by the point $b$. Then $$
b^![\XX]^{vir}=[X_b]^{vir}.\qed$$
\end{Lemma}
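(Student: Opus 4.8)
The plan is to deduce this from the compatibility of ordinary normal cones with specialisation, along the lines of \cite[Prop.~7.2]{BF}. First I would set up the Gysin map and the fibre obstruction theory. Since $B$ is smooth and $b$ is a closed point, $b\colon\spec K\hookrightarrow B$ is a regular closed embedding of codimension $c:=\dim_b B$, so the refined Gysin homomorphism $b^!\colon A_k(\XX)\to A_{k-c}(X_b)$ of \cite[\S 6]{Fu} is defined for the cartesian square attached to $\pi$; as $[\XX]^{vir}\in A_{d+c}(\XX)$ and $[X_b]^{vir}\in A_d(X_b)$, both sides of the asserted identity lie in $A_d(X_b)$. One also checks that $E^\bullet_b=i_b^*E^\bullet$ is again a $1$-perfect obstruction theory for $X_b$ of virtual dimension $d$; this is the standard base-change compatibility of relative obstruction theories along a fibre over a smooth base, which reduces to the natural morphism $Li_b^*L_{\XX/B}\to L_{X_b}$ being an isomorphism on $h^0$ and a surjection on $h^{-1}$ because $b$ is a regular point of $B$.

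Next I would recall from the construction of $[\,\cdot\,]^{vir}$ that, after choosing a global resolution $[E^{-1}\to E^0]$ of $E^\bullet$ (which restricts to one of $E^\bullet_b$, so that the bundle $E_1$ restricts to $E_1^b:=i_b^*E_1$ compatibly with zero sections), one has $[\XX]^{vir}=s_0^*[C(E)]$ and $[X_b]^{vir}=(s_0^b)^*[C(E_b)]$, where $C(E)\subseteq E_1$ (resp. $C(E_b)\subseteq E_1^b$) is the preimage of the relative (resp. absolute) intrinsic normal cone. Now $s_0^*$ and $b^!$ are the refined Gysin homomorphisms of the two regular embeddings $\XX\hookrightarrow E_1$ (the zero section) and $E_1^b\hookrightarrow E_1$ (the pullback of $\{b\}\hookrightarrow B$), whose scheme-theoretic intersection is $X_b$; the commutativity theorem \cite[Thm.~6.4]{Fu} therefore gives $b^!\circ s_0^*=(s_0^b)^*\circ b^!$. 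Thus it suffices to prove the cone identity
$$b^![C(E)]=[C(E_b)]\in A_*(C(E_b)).$$

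To establish this I would first reduce, by an argument analogous to the first step of \propref{goodprop}, to a single presentation, namely an honest local embedding $j\colon\XX\hookrightarrow M$ with $M\to B$ smooth; then $C(E)$ arises from the relative normal cone $C_{\XX/M}$ by the usual procedure, and $C(E_b)$ from $C_{X_b/M_b}$ with $M_b:=M\times_B\spec K$ smooth over $K$. Unwinding, the displayed identity becomes
$$b^![C_{\XX/M}]=[C_{X_b/M_b}]$$
as cycles on $i_b^*C_{\XX/M}$, where a priori one only knows that there is a closed embedding of cones $C_{X_b/M_b}\hookrightarrow i_b^*C_{\XX/M}$ over $X_b$.

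The one genuinely delicate point is this last equality, and I would prove it by the deformation-to-the-normal-cone technique, exactly as in the proof of \cite[Prop.~7.2]{BF}: the deformation of $M$ to the normal cone of $\XX$ is a scheme flat over $\mathbb A^1$ and living over $\mathbb A^1\times B$, whose restriction over $b$ is the corresponding deformation of $M_b$ to the normal cone of $X_b$; the $\mathbb A^1$-flatness produces the required rational equivalence between $b^![C_{\XX/M}]$ and $[C_{X_b/M_b}]$ on $i_b^*C_{\XX/M}$, in the spirit of the specialisation constructions of \cite[Ch.~5]{Fu}. It is precisely the failure of $\pi$ to be flat that makes the refined Gysin map and this deformation argument unavoidable; once the equality is in hand, combining the reductions gives $b^![\XX]^{vir}=(s_0^b)^*b^![C(E)]=(s_0^b)^*[C(E_b)]=[X_b]^{vir}$.
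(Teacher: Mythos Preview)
The paper does not give its own proof of this lemma: note the \qed in the statement and the sentence preceding it, ``Recall from \cite[Prop.~7.2]{BF} the following''. Your sketch is precisely an outline of the argument behind that cited result, so there is nothing to compare; your approach is the intended one, and the reductions you describe (commutativity of refined Gysin maps \cite[Thm.~6.4]{Fu}, then compatibility of normal cones under specialisation via deformation to the normal cone) are exactly what \cite[Prop.~7.2]{BF} does.
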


We recall that the principle of conservation of number \cite[Proposition 10.2]{Fu} states that for any $\alpha\in A_{\dim B}(\XX)$, the degree of the cycle $\alpha_b:=i_b^!(\alpha)$ is locally constant in $b$.  The principle is in fact valid for arbitrary cycles in $A_*(\XX)$ if we use the convention that $\deg$ is defined on the $i$-th Chow group $A_i$ to be zero if $i\ne 0$. By using this principle, we immediately deduce the following Corollary.
\begin{Corollary}\label{definv} Let $\pi:\XX\to B$ be a proper family of virtually smooth schemes. For any $\gamma\in A^*(\XX)$, the number $$\int_{[X_b]^{vir}} i_b^*\gamma$$
is locally constant in $b$.
\end{Corollary}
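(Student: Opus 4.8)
The plan is to deduce the corollary directly from the principle of conservation of number, in the very form recalled just above the statement; the only point requiring a little care is the interaction of an operational Chow class with the refined Gysin homomorphism.

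First I would set $\alpha := \gamma \cap [\XX]^{vir} \in A_*(\XX)$. Since $B$ is smooth, the closed point $b\in B$ gives a regular embedding $b\colon\spec K\to B$ of codimension $\dim B$, and pulling back $\pi$ along it produces the Cartesian square with $i_b\colon X_b\to\XX$ on top; this is exactly the situation that defines the refined Gysin homomorphism $b^!\colon A_*(\XX)\to A_*(X_b)$, which is what is denoted $i_b^!$ in the conservation-of-number statement. The key identity to establish is
$$b^!\big(\gamma\cap[\XX]^{vir}\big)=(i_b^*\gamma)\cap b^!\big([\XX]^{vir}\big),$$
i.e. that the operational class $\gamma\in A^*(\XX)$ commutes with $b^!$ and restricts correctly; this follows from Fulton's bivariant formalism \cite[Ch.~17]{Fu}, since $b^!$ is itself given by the orientation class of the regular embedding $b$ and bivariant classes commute with one another. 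Combining this with the recalled Lemma $b^![\XX]^{vir}=[X_b]^{vir}$ yields
$$b^!\alpha=(i_b^*\gamma)\cap[X_b]^{vir}.$$

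Then, since $\pi$ is proper, $X_b$ is proper, so taking degrees (with the convention that $\deg$ vanishes on $A_i$ for $i\neq0$) gives $\deg(b^!\alpha)=\deg\big((i_b^*\gamma)\cap[X_b]^{vir}\big)=\int_{[X_b]^{vir}}i_b^*\gamma$. Applying the principle of conservation of number \cite[Proposition 10.2]{Fu} to the cycle $\alpha\in A_*(\XX)$ — in the extended form recalled above, valid for arbitrary cycles — shows that $b\mapsto\deg(b^!\alpha)$ is locally constant in $b$, which is precisely the assertion. The only genuine obstacle is the displayed identity: one must make sure the operational class $\gamma$ passes through the refined Gysin map $b^!$, not merely through flat pullback or proper pushforward; I expect this to be routine but it is the step that actually uses something beyond bookkeeping.
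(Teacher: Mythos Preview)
Your proof is correct and follows essentially the same route as the paper: set $\alpha=\gamma\cap[\XX]^\vir$, use the compatibility of the refined Gysin map with operational classes together with $b^![\XX]^\vir=[X_b]^\vir$ to get $b^!\alpha=(i_b^*\gamma)\cap[X_b]^\vir$, and then invoke conservation of number. The only difference is that you spell out the bivariant justification for the commutation $b^!(\gamma\cap-) = (i_b^*\gamma)\cap b^!(-)$, which the paper leaves implicit.
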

\begin{proof} By definition, $$
\int_{[X_b]^{vir}} i_b^*\gamma=\deg \left(i_b^*\gamma\cap [X_b]^\vir\right)=\deg \left(i_b^*\gamma\cap i_b^![\XX]^\vir\right)=\deg i_b^!(\gamma\cap [\XX]^\vir).$$
Note that this number is zero if $\gamma\in A^e(\XX)$ and $e\ne d$, where $d$ is the virtual dimension of $X_b$.
\end{proof}

\begin{Definition} For any numerical object (e.g., a number or a function) which is defined in terms of a proper virtual smooth scheme $X$ and possibly of an element $V$ in $K^0(X)$, we say that it is {\em deformation invariant} if, for every family of proper virtually smooth schemes $\XX$ and every object $V\in K^0(\XX)$, the invariant associated to the virtually smooth scheme $X_b$ and the element $V_b$ is locally constant in $b$.
\end{Definition}

\begin{Theorem}\label{divc} Let $V\in K^0(\XX)$, and assume that $\pi$ is proper. Then $$
\chi^{vir}(X_b,V_b)$$
is locally constant in $b$. In other words, the virtual holomorphic Euler characteristic is deformation invariant. 
\end{Theorem}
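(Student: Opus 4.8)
The plan is to combine the virtual Hirzebruch--Riemann--Roch theorem (\corref{HRR}) with the deformation invariance of virtual intersection numbers (\corref{definv}). The point is that $\chi^{\vir}$, which a priori is a cohomological Euler characteristic and not obviously invariant, gets rewritten as an honest integral against the virtual fundamental class, and then \corref{definv} applies verbatim.

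First I would apply \corref{HRR} to each fibre: for every closed point $b\in B$,
\begin{equation*}
\chi^{\vir}(X_b,V_b)=\int_{[X_b]^\vir}\ch(V_b)\,\td(T^\vir_{X_b}).
\end{equation*}
Next I would express the integrand as the restriction of a global class. Set $\gamma:=\ch(V)\cdot\td(T^\vir_{\XX/B})\in A^*(\XX)$. Since $\ch$ and $\td$ commute with pullback along arbitrary morphisms, and since $i_b^*V=V_b$ and $i_b^*T^\vir_{\XX/B}=T^\vir_{X_b}$ (the latter being part of the definition of a family), we get $i_b^*\gamma=\ch(V_b)\cdot\td(T^\vir_{X_b})$. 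Therefore
\begin{equation*}
\chi^{\vir}(X_b,V_b)=\int_{[X_b]^\vir} i_b^*\gamma,
\end{equation*}
which is exactly the quantity shown to be locally constant in $b$ in \corref{definv}. This finishes the proof.

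The only thing to be careful about is that \corref{HRR} is stated for a proper scheme over a point, so I should note that each fibre $X_b=\pi^{-1}(b)$ is proper (being the fibre of the proper morphism $\pi$) and carries the induced $1$-perfect obstruction theory $E^\bullet_b$; thus the hypotheses of \corref{HRR} are met fibrewise. There is no real obstacle here: the substance of the theorem is already contained in \corref{HRR} and \corref{definv}, and the present statement is essentially the observation that the two combine. If anything, the mildly delicate point is purely formal bookkeeping --- making sure that ``the invariant associated to $X_b$ and $V_b$'' in the definition of deformation invariance really is computed by restricting the fixed global class $\gamma$, which is immediate from the compatibility of Chern character and Todd class with pullback.
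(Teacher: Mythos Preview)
Your proof is correct and follows exactly the approach of the paper: the paper's own proof is the single sentence ``This is an immediate consequence of Corollary~\ref{definv} and of virtual Hirzebruch--Riemann--Roch,'' and you have simply spelled out the details of how these two results combine.
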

\begin{proof} This is an immediate consequence of Corollary \ref{definv} and of virtual Hirzebruch-Riemann-Roch.
\end{proof}
  
\section{Virtual  $\chi_{-y}$-genus, Euler characteristics and signature}

In this section we introduce the virtual $p$-forms $\Omega^{p,\vir}_X$ on $X$ and define the virtual 
$\chi_{-y}$-genus $\chi^\vir_{-y}(X)$. A priori $\chi^\vir_{-y}(X)$  is just a formal power series in $y$. However we will prove that it is a  polynomial of degree $d$
in $y$ satisfying $\chi^\vir_{-y}(X)=y^d\chi_{-1/y}(X)$. The virtual Euler number  is then defined as $e^\vir(X):=\chi^\vir_{-1}(Y)$
and the virtual signature as $\sigma^\vir(X):=\chi^\vir_{1}(X)$.
We show a virtual version of the Hopf index theorem:
$e^\vir(X)=\int_{[X]^\vir} c_d(T_X^\vir)$. If $X$ is a proper local complete
intersection scheme with its natural obstruction theory, then 
$e^\vir(X)$ is the degree of Fulton's Chern class $c_F(X)$. 

\begin{Definition}
If $E$ is a vector bundle on $X$ of rank $r$ and $t$ a variable, we put
$$\Lambda_t E:=\sum_{i=0}^r [\Lambda^i E] t^i\in K^0(X)[t],\quad 
S_t(E):=\sum_{i\ge 0} [S^i E] t^i\in K^0(X)\[[t\]].$$
If $0 \to F\to G\to H\to 0$ is a short exact sequence of vector bundles on $X$, it easy to see that  
$\Lambda_t G=\Lambda_t F \times \Lambda_t H$.
Furthermore it is standard that 
$1/\Lambda_t(E)=S_{-t} E$ in $K^0(X)\[[t\]]$. Thus $\Lambda_t$ can be extended to a homomorphism $\Lambda_t:K^0(X)\to K^0(X)\[[t\]]$ by 
$\Lambda_t([E]-[F])=\Lambda_t(E) S_{-t}(F)$.
For $n\in \Z_{\ge 0}$ and $V\in K^0(X)$, we define $\Lambda^nV:=\Coeff_{t^n}\Lambda_tV\in K^0(X)$.

We define $\Omega_X^\vir:=(T_X^\vir)^\vee$.
The bundle of {\em virtual $n$-forms} on $X$ is $\Omega_X^{n,\vir}:=\Lambda^n(\Omega_X^\vir)\in K^0(X)$.
\end{Definition}

\begin{Definition} \label{chiy} 
Let $X$ be a proper scheme with a perfect obstruction theory and expected dimension $d$.
The {\em virtual $\chi_{-y}$-genus} of $X$ is defined by
$$\chi^\vir_{-y}(X):=\chi^\vir(X,\Lambda_{-y}\Omega^\vir_{X})=\sum_{p\ge 0} (-y)^p \chi^\vir(X,\Omega^{p,\vir}_X).$$
Let $V\in K^0(X)$. 
The {\em virtual $\chi_{-y}$-genus with coefficients in $V$} of $X$ is defined by
$$\chi^\vir_{-y}(X,V):=\chi^\vir(X,V\otimes \Lambda_{-y}\Omega^\vir_{X})=\sum_{p\ge 0} (-y)^p \chi^\vir(X,V\otimes \Omega^{p,\vir}_X).
$$
By definition $V\otimes \Lambda_{-y}(\Omega_X^\vir)\in K^0(X)\[[y\]]$, and thus $\chi^\vir_{-y}(X,V)\in \Z\[[y\]]$.

We will show below that $\chi^\vir_{-y}(X,V)\in \Z[y]$. Assuming this result for the moment, the {\em virtual Euler characteristic} of $X$ is defined as 
$e^\vir(X):=\chi^\vir_{-1}(X)$, and the {\em virtual signature} of $X$ as
$\sigma^\vir(X):=\chi^\vir_1(X)$.

Finally, for any partition $I$ of $d$, where $I=(i_1,\ldots,i_r)$ and $\sum_{k=1}^rk\cdot i_k=d$, we define the $I$-th virtual Chern number of $X$ to be $c_I(X):=\int_{[X]^\vir}\prod_{k=1}^rc_k^{i_k}(T_X^\vir)$. The virtual Chern numbers are deformation invariant by Lemma \ref{definv}.

Let $n=\rk(E_0)$, $m=\rk(E_1)$, $d=n-m$.
Let $x_1,\ldots,x_n$ be the Chern roots of $E_0$, $u_1,\ldots,u_n$ the Chern roots of $E_1$. 
We write $A^{>d}(X):=\sum_{l>d} A^l(X)$.
Let $A$ be the quotient of $A^*(X)$ by $A^{>d}(X)$.
We will denote classes in $A$ by the same letters as the  corresponding classes in $A^*(X)$.
By definition we have 
$$\ch(\Lambda_{-y}\Omega_X^\vir)=\frac{\prod_{i=1}^n (1-y e^{-x_i})}
{\prod_{j=1}^m(1-ye^{-u_j})}\in A\[[y\]],$$
where the right hand side is considered as element in $A\[[y\]]$ by the development
$$\frac{1}{\prod_{j=1}^m(1-ye^{-u_j})}=\prod_{j=1}^m\Big(\sum_{k\ge 0} y^k e^{-ku_j}\Big)\in A\[[y\]].$$
Let 
\begin{equation}\label{xxy}{\XX}_{-y}(X):=\ch(\Lambda_{-y}\Omega_X^\vir) \cdot \td(T_X^\vir)=
\prod_{i=1}^n\frac{x_i(1-y e^{-x_i})}{1-e^{-x_i}}\cdot 
\prod_{j=1}^m\frac{1-e^{-u_j}}{u_j(1-ye^{-u_j})}
\in A\[[y\]].
\end{equation}
By the virtual Riemann-Roch theorem we have
\begin{equation} \label{chiyRR}
\chi^\vir_{-y}(X)=\int_{[X]^\vir}{\XX}_{-y}(X), \ \chi^\vir_{-y}(X,V)=\int_{[X]^\vir}{\XX}_{-y}(X)\cdot \ch(V).
\end{equation}
\end{Definition}

\begin{Theorem}\label{Euler}
\begin{enumerate}
\item We have
$\chi^\vir_{-y}(X,V)\in \Z[y]$, and its degree in $y$ is at most $d$. 
\item $\chi_{-1}(X,V)=\rk(V) \int_{[X]^\vir} c_d(T_X^\vir)$.
\item In fact, we have ${\XX}_{-y}(X)\in A[y]$, and its degree 
in $y$ is at most $d$.
Furthermore we can write
${\XX}_{-y}(X)=\sum_{l=0}^d (1-y)^{d-l} {\XX}^l$ where
${\XX}^l=c_l(T_X^\vir)+b_l$ with $b_l\in A^{> l}(X)$.
\end{enumerate}
\end{Theorem}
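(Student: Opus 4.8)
The plan is to work entirely with the explicit power series $\XX_{-y}(X)$ of \eqref{xxy}, since statements (1) and (2) follow from (3) by integration over $[X]^\vir$ together with the virtual Riemann–Roch formula \eqref{chiyRR} and the observation that $A^l(X)$ contributes nothing to the integral for $l>d$ while $\int_{[X]^\vir}$ kills everything but the degree-$d$ part; for (2) one sets $y=1$ in the expansion $\XX_{-y}(X)=\sum_{l=0}^d(1-y)^{d-l}\XX^l$, leaving only $\XX^d=c_d(T_X^\vir)+b_d$ with $b_d\in A^{>d}(X)$, hence $\int_{[X]^\vir}\XX^d\cdot\ch(V)=\rk(V)\int_{[X]^\vir}c_d(T_X^\vir)$. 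So the whole content is the factorization in (3).

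The key step is a formal identity for the single-variable generating function. Introduce the power series
\[
Q(x,y):=\frac{x(1-ye^{-x})}{1-e^{-x}}\in\Q[y][\![x]\!].
\]
I claim $Q(x,y)=(1-y)+x\cdot R(x,y)$ for some $R\in\Q[y][\![x]\!]$; indeed $Q(x,y)=\frac{x}{1-e^{-x}}-y\cdot\frac{xe^{-x}}{1-e^{-x}}$, and $\frac{x}{1-e^{-x}}=1+\frac x2+\cdots$ while $\frac{xe^{-x}}{1-e^{-x}}=1-\frac x2+\cdots$, so the constant term of $Q$ is $1-y$ and the claim follows. The crucial point is that $Q$ is a \emph{polynomial} in $y$ of degree $1$, with $y$-coefficient divisible by $x$; writing $Q(x,y)=(1-y)+xR_0(x)+y\cdot x R_1(x)$ we only need that the coefficient of $x^k$ in $Q$ is a polynomial in $y$ of degree $\le\min(k,1)$ — in particular of degree $\le k$. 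Similarly the "inverse" factor $\frac{1-e^{-u}}{u(1-ye^{-u})}$: here I would use $1-ye^{-u}=(1-y)+u\cdot(\text{series})$, so when $1-y$ is inverted one gets $\frac1{1-y}\cdot\big(1+\tfrac u{1-y}(\cdots)\big)^{-1}$, which is problematic as a power series in $y$. The honest route is the one the paper sets up: expand $\frac1{1-ye^{-u}}=\sum_{k\ge0}y^ke^{-ku}$ as an element of $A[\![y]\!]$, and observe that its coefficient of $u^j$ is $\sum_{k\ge0}y^k\frac{(-k)^j}{j!}$ — this is where modding out by $A^{>d}(X)$ becomes essential, because only finitely many $u^j$ survive in $A$, and we need each of those finitely many $y$-coefficients to be \emph{polynomial} in $y$.

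So the heart of the argument is the finiteness/polynomiality lemma: for a nilpotent element, i.e. working in the finite-dimensional ring $A=A^*(X)/A^{>d}(X)$ where $A^{>d}$ is an ideal with $A^{d+1}=0$ effectively (all products of total degree $>d$ vanish), the series $\frac1{1-ye^{-u}}$ is a polynomial in $y$ of bounded degree whenever $u$ is nilpotent of order $\le d+1$. I would prove this by writing $e^{-u}=1-N$ with $N$ nilpotent ($N^{d+1}=0$ in $A$), so $1-ye^{-u}=(1-y)+yN$, and then — rather than inverting — multiply everything out and track degrees: since $N^{d+1}=0$, we have $\frac{1}{(1-y)+yN}$ is \emph{not} a polynomial, so instead I must keep the product $\prod_i x_i(1-ye^{-x_i})\cdot\prod_j\frac{1-e^{-u_j}}{u_j}$ together and only at the end divide by $\prod_j(1-ye^{-u_j})$, using that $\XX_{-y}(X)\cdot\prod_j(1-ye^{-u_j})$ is manifestly a polynomial in $y$ and that $\prod_j(1-ye^{-u_j})$ has constant term $(1-y)^m\ne0$ — and here I'd argue that the quotient of a $y$-polynomial by $\prod_j(1-ye^{-u_j})$ lands back in $A[y]$ because, modulo the nilpotent ideal generated by the $u_j$, the denominator is $(1-y)^m$, and ${\XX}_{-y}(X)$ evaluated by first reducing mod $(u_j)$ must have $\prod_j\frac{1-e^{-u_j}}{u_j(1-ye^{-u_j})}\equiv(1-y)^{-m}$, which combines with the $\prod_i(\cdots)\equiv(1-y)^n$ to give the expected leading behavior $(1-y)^{d}$ — the $(1-y)$ in the denominator is exactly cancelled. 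To extract the $(1-y)^{d-l}$ expansion I would then substitute $y=1-(1-y)$ formally, i.e. re-expand $\XX_{-y}(X)$ in powers of $(1-y)$; that this expansion terminates at degree $d$ in $(1-y)$ follows from counting: each factor contributes a leading $(1-y)^{\pm1}$ and the corrections raise the cohomological degree, which is capped at $d$ in $A$. Finally, to identify $\XX^l=c_l(T_X^\vir)+b_l$: at the "top" of the $(1-y)$-expansion, i.e. the coefficient of $(1-y)^{d-l}$, the terms of lowest cohomological degree come only from the constant ($x$- and $u$-free) parts of the Todd-type factors, which are all $1$, times the degree-$l$ part of $\prod_i(1-ye^{-x_i})/\prod_j(1-ye^{-u_j})\cdot(\text{Todd corrections})$; setting the auxiliary variable so as to isolate the "naive Chern class" contribution one reads off $c_l(T_X^\vir)$ from $\prod(1-ye^{-x_i})\cdots$ at $y$ near the right value, with all higher-degree junk absorbed into $b_l\in A^{>l}(X)$.

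\textbf{The main obstacle} I expect is precisely the polynomiality/division step: showing that dividing the manifest $y$-polynomial $\prod_i x_i(1-ye^{-x_i})\prod_j(1-e^{-u_j})/u_j$ by $\prod_j(1-ye^{-u_j})$ yields an element of $A[y]$ rather than merely $A[\![y]\!]$, and pinning down the degree bound $d$. The clean way to do this is to note that $\prod_j(1-ye^{-u_j})=(1-y)^m\cdot v$ where $v=\prod_j\big(1+\tfrac{y}{1-y}(1-e^{-u_j})\big)$ is a unit in $A[\tfrac1{1-y}]$ congruent to $1$ modulo nilpotents; then $\XX_{-y}(X)=(1-y)^{d}\cdot(\text{unit congruent to }1)\cdot w$ where $w$ is a $y$-polynomial, and one checks the power of $(1-y)$ never goes negative after multiplying out because the nilpotent corrections each carry a compensating factor — a finite bookkeeping argument using $A^{d+1}$-nilpotence. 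I would present this as a short self-contained lemma about the ring $\Q[y][\![x_1,\dots]\!]$ modulo total degree $>d$, then feed it the Chern roots.
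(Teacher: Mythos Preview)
Your reduction of (1) and (2) to (3) is correct and matches the paper verbatim. Your diagnosis of the ``main obstacle'' is also right: the entire content of (3) is polynomiality in $y$ together with the degree bound, and your instinct to reorganize in powers of $1-y$, using that every occurrence of $\tfrac1{1-y}$ is accompanied by an element of positive cohomological degree (hence nilpotent in $A$), is exactly the mechanism. So there is no genuine gap; what is missing is only the clean execution, and your detours through separately inverting $\prod_j(1-ye^{-u_j})$ are unnecessary.

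The paper carries out precisely your plan by making the substitution $z=1-y$ explicit and uniform from the outset. From your own observation $Q(x,y)=(1-y)+xR(x,y)$ one computes $Q(x,1-z)=z\bigl(\tfrac{xe^{-x}}{1-e^{-x}}+\tfrac{x}{z}\bigr)$, and hence
\[
{\XX}_{-(1-z)}(X)\;=\;z^{d}\,\frac{\prod_{i=1}^n\bigl(\tfrac{x_ie^{-x_i}}{1-e^{-x_i}}+\tfrac{x_i}{z}\bigr)}{\prod_{j=1}^m\bigl(\tfrac{u_je^{-u_j}}{1-e^{-u_j}}+\tfrac{u_j}{z}\bigr)}\;=:\;\mathcal Y_z.
\]
Now the division problem has evaporated: each factor in numerator \emph{and} denominator has the same shape $1+(\text{element of }A^{\ge 1})+\tfrac{1}{z}(\text{element of }A^{1})$, so expanding the ratio as a formal Laurent series in $z^{-1}$ one sees immediately that the coefficient of $z^{-l}$ lies in $A^{\ge l}$. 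In the quotient ring $A$ this vanishes for $l>d$, whence $\mathcal Y_z\in A[z]$ of degree $\le d$; and the part of the coefficient of $z^{d-l}$ in $A^l$ receives contributions only from the $x_i/z$ and $u_j/z$ terms, giving $c_l(T_X^\vir)$. This is your ``finite bookkeeping argument using $A^{>d}$-nilpotence'' made into a two-line computation. The self-contained power-series lemma you propose at the end is exactly what the paper's Appendix records.
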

\begin{proof} We
start by observing that it is enough to prove (3). Assume we know (3). Then \eqref{chiyRR} implies  that 
$\chi^\vir_{-y}(X,V)=\int_{[X]^\vir}{\XX}_{-y}(X)\cdot \ch(V)\in \Q[y]$
is a polynomial of degree at most $d$. By definition $\chi^\vir_{-y}(X,V)\in \Z\[[y\]]$, thus (1) follows.  
(3)  also gives 
$$\chi_{-1}(X,V)=\int_{[X]^\vir}{\XX}^0(X)\cdot \ch(V)= \rk(V)\int_{[X]^\vir}c_d(T_X^\vir),$$
which gives (2).  Thus we only have to show (3).
Let 
\begin{equation}\label{yyy}
{\cal Y}_z:=z^d\frac{\prod_{i=1}^n \big(x_i\frac{e^{-x_i}}{1-e^{-x_i}}+\frac{x_i}{z}\big)}{\prod_{j=1}^m\big( u_j\frac{e^{-u_j}}{1-e^{-u_j}}+\frac{u_j}{z}\big)}\in A[z].
\end{equation}
The right  hand side of \eqref{yyy} is seen to be an element of $A[z]$
as follows: for a variable $t$ write 
$\frac{te^{-t}}{1-e^{-t}}:=1+\sum_{k>0} a_k t^k\in \Q\[[t\]]$, which is obviously 
invertible in $\Q\[[t\]]$. Putting this into \eqref{yyy}, we get that 
\begin{equation}\label{yyyz}
{\cal Y}_z=z^d\frac{\prod_{i=1}^n \big(1+\sum_{k>0} a_k x_i^k+\frac{x_i}{z}\big)}{\prod_{j=1}^m \big(1+\sum_{k>0} a_k u_j^k+\frac{u_j}{z}\big)}
\in A^*(X)\((z^{-1}\)).
\end{equation}
Denote ${\cal Y}^{l}$ the coefficient of $z^{d-l}$ of ${\cal Y}_z$. Then we
see immediately from \eqref{yyyz} that ${\cal Y}^{l}\in A^{\ge l}(X)$ for all $l\ge 0$. In particular ${\cal Y}^{l}$ is zero  in $A$ for $d-l<0,$ and thus ${\cal Y}_z\in A[z]$. We also see that ${\cal Y}_z$ has at most degree $d$ in $z$. Furthermore \eqref{yyyz} also implies that the part of ${\cal Y}^{l}$ in $A^{l}(X)$ is the part in $A^l(X)$ of 
$\frac{\prod_{i=1}^n (1+x_i)}{\prod_{j=1}^m (1+u_j)}$, i.e. $c_l(T_X^\vir)$.
Thus in order to finish the proof we only have to see that
${\cal Y}_{1-y}={\XX}_{-y}(X)$ in $A\[[y\]]$. In $A\[[y\]]$ we have 
\begin{align*}{\cal Y}_{1-y}&=(1-y)^d\frac{\prod_{i=1}^n \big(x_i \frac{e^{-x_i}}{1-e^{-x_i}}+\frac{x_i}{1-y}\big)}{\prod_{j=1}^m \big(u_j \frac{e^{-u_j}}{1-e^{-u_j}}+\frac{u_j}{1-y}\big)}
=\frac{\prod_{i=1}^n x_i\big(1+(1-y) \frac{e^{-x_i}}{1-e^{-x_i}}\big)}
{\prod_{j=1}^m x_i\big(1+(1-y) \frac{e^{-u_j}}{1-e^{-u_j}}\big)}\\&=
\prod_{i=1}^n\frac{x_i(1-ye^{-x_i})}{1-e^{-x_i}}\prod_{j=1}^m
\frac{1-e^{-u_j}}{u_j(1-ye^{-u_j})}={\XX}_{-y}(X).
\end{align*}
\end{proof}

\begin{Corollary}[Hopf index theorem]\label{actually} The virtual Euler characteristic equals the top virtual Chern number $$
e^\vir(X)=c_d(X).$$
\end{Corollary}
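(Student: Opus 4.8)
The plan is to obtain this directly as the special case $V=\oo_X$ of \thmref{Euler}(2). First observe that, straight from the definitions, $\chi^\vir_{-y}(X)=\chi^\vir(X,\Lambda_{-y}\Omega^\vir_X)=\chi^\vir(X,\oo_X\otimes\Lambda_{-y}\Omega^\vir_X)=\chi^\vir_{-y}(X,\oo_X)$, since $\oo_X$ is the unit of $K^0(X)$ and hence tensoring by it is the identity; in particular $e^\vir(X)=\chi^\vir_{-1}(X)=\chi^\vir_{-1}(X,\oo_X)$. Now $\rk(\oo_X)=1$, so \thmref{Euler}(2) applied with $V=\oo_X$ yields
$$e^\vir(X)=\chi^\vir_{-1}(X,\oo_X)=\rk(\oo_X)\int_{[X]^\vir}c_d(T_X^\vir)=\int_{[X]^\vir}c_d(T_X^\vir).$$
By \defref{chiy} the right-hand side is precisely the top virtual Chern number, i.e.\ the virtual Chern number $c_I(X)$ attached to the partition $I$ of $d$ with $i_d=1$ and all other $i_k=0$; denoting this by $c_d(X)$ as in the statement, we conclude $e^\vir(X)=c_d(X)$.

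There is no genuine obstacle here: the entire substance has already been supplied in the proof of \thmref{Euler}, namely the polynomiality of ${\XX}_{-y}(X)$ in $y$ together with the fact that specialising at $y=1$ kills every term of $\sum_{l=0}^d(1-y)^{d-l}{\XX}^l$ except ${\XX}^d=c_d(T_X^\vir)$ (the correction $b_d$ lying in $A^{>d}(X)$, which vanishes in $A$). The only point specific to this corollary is the purely formal reduction $\chi^\vir_{-y}(X)=\chi^\vir_{-y}(X,\oo_X)$ noted above, which is immediate.
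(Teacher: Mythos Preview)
Your proof is correct and follows exactly the paper's approach: the corollary is stated there simply as a special case of \thmref{Euler}(2), and you have spelled out that special case by taking $V=\oo_X$. The additional remarks you make in the second paragraph (about the polynomiality and the specialisation at $y=1$) merely recapitulate how \thmref{Euler}(2) was obtained from \thmref{Euler}(3) and are not logically needed here, but they are accurate.
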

\begin{proof} This is a special case of Theorem \ref{Euler}(2).
\end{proof}




\begin{Corollary}\label{sym}
Let $X$ be proper of expected dimension $d$, and $V\in K^0(X)$.
\begin{enumerate}
\item
$\chi^\vir_{-y}(X,V)=y^d \chi^\vir_{-1/y}(X,V^\vee).$
\item 
For all $p\ge 0$ we have $\chi^\vir(X,V\otimes \Omega^{p,\vir}_X)=
(-1)^d \chi^\vir(X,V^\vee \otimes \Omega^{d-p,\vir}_X)$, in particular 
$\chi^\vir(X,V\otimes \Omega^{p,\vir}_X)=0$ for $p>d$.
\end{enumerate}
\end{Corollary}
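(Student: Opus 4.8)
The plan is to derive Corollary~\ref{sym} as a formal consequence of the virtual Serre duality statement already available (the weak virtual Serre duality Proposition), together with the polynomiality in Theorem~\ref{Euler}. The key observation is that the two displayed formulae in \eqref{chiyRR} together with the symmetry of the integrand ${\XX}_{-y}(X)$ under the substitution sending all Chern roots $x\mapsto -x$ (which is exactly the device used in the proof of weak virtual Serre duality) already encode the identity we want; we only need to bookkeep the effect of that substitution on the extra factor $\Lambda_{-y}\Omega_X^\vir$.

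First I would prove part (1). Apply weak virtual Serre duality not to $V$ but to $W:=V\otimes\Lambda_{-y}\Omega_X^\vir\in K^0(X)\[[y\]]$ (the proposition extends to $K^0(X)\[[y\]]$ coefficients since it is linear and everything is a finite sum in each $y$-degree). This gives $\chi^\vir(X,W)=(-1)^d\chi^\vir(X,W^\vee\otimes K_X^\vir)$. Now I would identify $W^\vee\otimes K_X^\vir$: since $\Lambda_{-y}(\Omega_X^\vir)^\vee=\Lambda_{-y}(T_X^\vir)$ and $\det(T_X^\vir)=(K_X^\vir)^\vee$, a standard identity $\Lambda_t(E)=t^{\rk E}\det(E)\Lambda_{1/t}(E^\vee)$ — valid for honest bundles and extended to $K^0$ by the homomorphism property of $\Lambda_t$, noting $\rk T_X^\vir=d$ — yields $\Lambda_{-y}(T_X^\vir)=(-y)^d\det(T_X^\vir)\Lambda_{-1/y}(\Omega_X^\vir)$. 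Hence $W^\vee\otimes K_X^\vir=V^\vee\otimes(-y)^d\Lambda_{-1/y}(\Omega_X^\vir)$, so $(-1)^d\chi^\vir(X,W^\vee\otimes K_X^\vir)=y^d\chi^\vir_{-1/y}(X,V^\vee)$, which is exactly (1). One should remark that these identities live in $\Z[y,y^{-1}]$ only after using Theorem~\ref{Euler}, which guarantees $\chi^\vir_{-y}(X,V)$ is a polynomial of degree $\le d$ so that the substitution $y\mapsto 1/y$ followed by multiplication by $y^d$ makes sense; without that, the bookkeeping of $S_t$-type series is only formal.

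Then part (2) follows by comparing coefficients of $y^p$ on both sides of (1). Writing $\chi^\vir_{-y}(X,V)=\sum_{p\ge 0}(-y)^p\chi^\vir(X,V\otimes\Omega^{p,\vir}_X)$ and expanding $y^d\chi^\vir_{-1/y}(X,V^\vee)=y^d\sum_{q\ge 0}(-1/y)^q\chi^\vir(X,V^\vee\otimes\Omega^{q,\vir}_X)=\sum_q(-1)^q y^{d-q}\chi^\vir(X,V^\vee\otimes\Omega^{q,\vir}_X)$, matching the coefficient of $y^p$ (using $(-1)^p$ on the left and setting $q=d-p$ on the right) gives $(-1)^p\chi^\vir(X,V\otimes\Omega^{p,\vir}_X)=(-1)^{d-p}\chi^\vir(X,V^\vee\otimes\Omega^{d-p,\vir}_X)$, i.e. $\chi^\vir(X,V\otimes\Omega^{p,\vir}_X)=(-1)^d\chi^\vir(X,V^\vee\otimes\Omega^{d-p,\vir}_X)$ since $(-1)^{2p}=1$. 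In particular, for $p>d$ the right-hand side has $\Omega^{d-p,\vir}_X$ with negative index, which is $0$ by convention ($\Lambda^nV=\Coeff_{t^n}\Lambda_tV=0$ for $n<0$), giving the vanishing.

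The main obstacle I anticipate is purely formal rather than deep: making precise the passage between the power-series world ($K^0(X)\[[y\]]$, where $\Lambda_{-y}$ and its dual naturally live) and the Laurent-polynomial world needed to make sense of $y^d\chi^\vir_{-1/y}$. The clean way around this is to do the degree-by-degree coefficient comparison entirely inside $\Z[y]$, invoking Theorem~\ref{Euler}(1) at the outset to know both sides are polynomials of degree $\le d$, so that the identity of their coefficients is what (1) really asserts; then (2) is immediate. The identity $\Lambda_t(E)=t^{\rk E}\det(E)\,\Lambda_{1/t}(E^\vee)$ should be stated as a lemma or recalled as standard, and one must check it is compatible with the extension of $\Lambda_t$ to virtual bundles — this is routine since both sides are multiplicative on short exact sequences and one checks it on line bundles.
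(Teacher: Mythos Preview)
Your proof is correct and follows essentially the same route as the paper's: apply weak virtual Serre duality to $V\otimes\Lambda_{-y}\Omega_X^\vir$, use the standard identity $(\Lambda_{-y}F)\cdot\det(F)^\vee=(-y)^{\rk F}\Lambda_{-1/y}F^\vee$ to rewrite $K_X^\vir\otimes\Lambda_{-y}T_X^\vir$ as $(-y)^d\Lambda_{-1/y}\Omega_X^\vir$, and then invoke Theorem~\ref{Euler} to pass from a formal identity to one of honest polynomials. The paper does the same computation at the level of Chern characters, applying the $\Lambda$--identity separately to the genuine bundles $E_0$ and $E_1$ rather than directly to the virtual class $T_X^\vir$; this sidesteps the issue you correctly flag, namely that for virtual bundles $\Lambda_{-y}(T_X^\vir)\in K^0(X)\[[y\]]$ while $(-y)^d\Lambda_{-1/y}(\Omega_X^\vir)$ lives in $y^dK^0(X)\[[1/y\]]$, so the identity only makes sense as one of rational functions (or, after integration, of polynomials). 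Your awareness of this point and your remedy via Theorem~\ref{Euler} match the paper's own treatment. Part~(2) is, as you say and as the paper states, a coefficient comparison.
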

\begin{proof}
(1) Let again $n:=\rk(E_0)$, $m:=\rk(E_1)$, $d=n-m$.
It is well known that  $\Lambda^kF\otimes \det(F)^\vee\simeq
\Lambda^{r-k} F^\vee$ for  $F$  a  vector bundle of rank $r$.
Equivalently 
\begin{equation}\label{llamb}
(\Lambda_{-y}F)\cdot [\det(F)^\vee]=(-y)^r\Lambda_{-1/y}F^\vee,\hbox{  in }K^0(X)[y].
\end{equation}
By the weak virtual Serre duality we have 
\begin{equation}\label{chiT}
\chi^\vir_{-y}(X,V)=\chi^\vir(X,V\otimes\Lambda_{-y}\Omega_{X}^\vir)=
(-1)^d \chi^\vir(X,V^\vee \otimes K_X\otimes \Lambda_{-y} T_{X}^\vir).
\end{equation}
By \eqref{llamb}  we have in  $A^*(X)\[[y\]]$ the identity 
\begin{align*}
\ch(K_X\otimes \Lambda_{-y} T_{X}^\vir)&=
\ch\left(\frac{\det(E_0)^\vee\otimes  \Lambda_{-y}E_0}{\det(E_1)^\vee\otimes \Lambda_{-y}E_1}\right)\\
&=\ch\left(\frac{(-y)^n \Lambda_{-1/y}E^0}{(-y)^m\Lambda_{-1/y}E^{-1}}\right)=
(-y)^d \ch\big(\Lambda_{-1/y}\Omega_{X}^\vir\big).
\end{align*}
Thus \eqref{chiT} and the virtual Riemann-Roch theorem give
\begin{equation}\label{funeq}
\begin{split}
\chi^\vir_{-y}(X,V)&=(-1)^d\int_{[X]^\vir} \ch\big(V^\vee \otimes K_X\otimes \Lambda_{-y} T_{X}^\vir
\big)\cdot \td(T_X^\vir)\\& =y^d \int_{[X]^\vir}\ch\big(V^\vee \otimes \Lambda_{-1/y}\Omega_{X}^\vir\big)\cdot\td(T_X^\vir)=
y^d \chi^\vir_{-1/y}(X,V^\vee).
\end{split}
\end{equation} 
\eqref{funeq}  holds a priori in $\Q\[[y\]]$, but by \thmref{Euler} both sides are in 
in $\Z[y]$. Thus \eqref{funeq} holds in  $\Z[y]$. This proves (1). (2) is just a reformulation of (1).
\end{proof}

\begin{Remark} Thus we see that the virtual $\chi_{-y}$-genus, Euler number and signature have properties very similar to their non-virtual counterparts on  smooth projective varieties:
\begin{enumerate}
\item   $\chi^\vir_{-y}(X,V)$ is a polynomial of degree $d$ in $y$ with $\chi^\vir_{-y}(X,V)=y^d\chi^\vir_{-1/y}(X,V^\vee)$, in particular $\chi^\vir_{-y}(X)=y^d\chi^\vir_{-1/y}(X)$.
\item $e^\vir(X)=\chi^\vir_{-1}(X)=\int_{[X]^\vir}c_d(T_X^\vir)$.
\item If $d$ is odd, then $\sigma^\vir(X)=0$.
\item By definition $\chi^\vir_0(X,V)=\chi^\vir(X,V)$ and in particular 
$\chi^\vir_0(X)=\chi^\vir(X,\oo_X)$.
\end{enumerate}
\end{Remark}

\begin{Proposition} Let $X$ be a proper, virtually smooth scheme, and $V\in K^0(X)$. Then the virtual  $\chi_{-y}$ genus $\chi_{-y}^\vir(X,V)$ is deformation invariant. Hence, also the virtual Euler characteristic $e^\vir(X)$ and the virtual signature $\sigma^\vir(X)$ are deformation invariant.
\end{Proposition}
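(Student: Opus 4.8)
The plan is to reduce this, exactly as \thmref{divc} reduced deformation invariance of $\chi^\vir$ to Corollary~\ref{definv}, by exhibiting $\chi^\vir_{-y}$ as the integral over the virtual fundamental class of a single universal Chow cohomology class that is pulled back from the total space of the family. So fix a family of proper virtually smooth schemes $\pi\colon\XX\to B$ and an element $V\in K^0(\XX)$, and write $i_b\colon X_b\hookrightarrow\XX$ for the inclusion of the fibre over a closed point $b\in B$.

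First I would form the relative integrand
$$\Gamma_y:=\ch\!\big(\Lambda_{-y}\Omega^\vir_{\XX/B}\big)\cdot\td\!\big(T^\vir_{\XX/B}\big)\cdot\ch(V)\ \in\ A^*(\XX)\[[y\]],$$
where $\Omega^\vir_{\XX/B}:=(T^\vir_{\XX/B})^\vee$, and let $\Gamma_k\in A^*(\XX)$ be the coefficient of $y^k$. Because $\ch$, $\td$ and the operation $\Lambda_{-y}\colon K^0(-)\to K^0(-)\[[y\]]$ all commute with pullback along arbitrary morphisms, and because $i_b^*T^\vir_{\XX/B}=T^\vir_{X_b}$ and $i_b^*V=V_b$ by the definition of a family, one has $i_b^*\Gamma_k=\Coeff_{y^k}\!\big(\ch(\Lambda_{-y}\Omega^\vir_{X_b})\cdot\td(T^\vir_{X_b})\cdot\ch(V_b)\big)$. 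Hence, by \eqref{chiyRR} (virtual Hirzebruch--Riemann--Roch, Corollary~\ref{HRR}, expanded in $y$), the coefficient of $y^k$ in $\chi^\vir_{-y}(X_b,V_b)$ equals $\int_{[X_b]^\vir}i_b^*\Gamma_k$, which is locally constant in $b$ by Corollary~\ref{definv}. Thus every coefficient of the power series $\chi^\vir_{-y}(X_b,V_b)$ is deformation invariant, which proves the first assertion.

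For the last sentence, \thmref{Euler} tells us $\chi^\vir_{-y}(X_b,\oo_{X_b})$ is a polynomial in $y$ supported in degrees $0,\dots,d$; since $e^\vir(X_b)=\chi^\vir_{-1}(X_b)$ and $\sigma^\vir(X_b)=\chi^\vir_{1}(X_b)$ are obtained from it by specializing $y$ to $1$ and $-1$, they are finite $\Z$-linear combinations of the (already proven deformation-invariant) coefficients, hence themselves deformation invariant. I do not expect a real obstacle here: the only point that needs a moment's care is that $\Gamma_k$ genuinely is the pullback of a single global class on $\XX$ — which is precisely what allows the appeal to Corollary~\ref{definv} — and this is guaranteed by the fact that the whole integrand ${\XX}_{-y}(X)\cdot\ch(V)$ is assembled solely from the pullback-compatible operations $\ch$, $\td$ and $\Lambda_{-y}$ applied to $T^\vir$ and $V$.
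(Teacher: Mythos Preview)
Your argument is correct and follows essentially the same route as the paper: both reduce to Corollary~\ref{definv} via virtual Hirzebruch--Riemann--Roch. The only difference is packaging: the paper cites Theorem~\ref{divc} directly (applying it to each coefficient $\chi^\vir(X_b,V_b\otimes\Omega^{p,\vir}_{X_b})$, noting that $\Omega^{p,\vir}_{X_b}=i_b^*\Omega^{p,\vir}_{\XX/B}$), whereas you unpack that theorem and go straight to Corollary~\ref{definv} by writing out the Chow-cohomology integrand $\Gamma_y$ on the total space.
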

\begin{proof} This follows immediately from the definition and from Proposition \ref{divc}. For the virtual Chern numbers a different proof can be given  by combining Corollary \ref{actually} with the deformation invariance of the Chern numbers.\end{proof}



\subsection{The local complete intersection case}

We will say that the scheme $X$ has local complete intersection singularities, or just is lci (see \cite[Appendix B.7]{Fu}) if it admits a closed regular  embedding $i:X\to M$ in a smooth scheme. In this case $\tau_{\ge -1}L_X^\bullet=L_X^\bullet$ is a perfect complex, and thus a natural obstruction theory for $X$. Hence every lci scheme is naturally a virtually smooth scheme. The corresponding virtual fundamental class
is just $[X]$ and the virtual structure sheaf is $\oo_X$.

A {\em family of proper lci schemes} is an lci morphism $\pi:\XX\to B$ with $B$ smooth; again, the relative cotangent complex is also a relative obstruction theory, and hence $\pi:\XX\to B$ is also a family of proper virtually smooth schemes.

If $X$ is a proper lci scheme and $X_0$ is a proper smooth scheme, we say that $X_0$ is a {\em smoothening} of $X$ if there exists a family of proper lci schemes $\pi:\mathcal X\to B$ and closed points $b,b_0\in B$ such that $X_b$ is isomorphic to $X$ and $X_{b_0}$ is isomorphic to $b_0$.

\begin{Theorem}\label{FultonChern}
Let $X$ be a proper lci scheme with its natural obstruction theory: then $e^\vir(X)=\deg(c_F(X))$. Therefore $\deg(c_F(X))$ is invariant under lci deformations of $X$;
if $X$ admits a smoothing $X_0$,
then $
e^{vir}(X)=e(X_0)$.
\end{Theorem}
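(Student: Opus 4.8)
The plan is to prove the identity $e^\vir(X)=\deg(c_F(X))$ for a proper lci scheme $X$ with its natural obstruction theory, and then deduce the two corollaries (invariance under lci deformation, and the smoothing statement) from the machinery already in place. Recall that for the natural lci obstruction theory $E^\bullet = L_X^\bullet$, the virtual fundamental class is just $[X]$ and $\oo_X^\vir=\oo_X$, so by Corollary~\ref{actually} we have $e^\vir(X)=c_d(X)=\int_{[X]}c_d(T_X^\vir)$, where $d=\dim X$ and $T_X^\vir$ is the dual of $L_X^\bullet$. The content of the theorem is therefore the equality $\int_{[X]}c_d(T_X^\vir)=\deg(c_F(X))$, i.e.\ that the degree-$d$ part of the total virtual Chern class, capped with $[X]$, agrees with Fulton's class $c_F(X)=c(T_M|_X)\cap s(X,M)$ for a closed regular embedding $i:X\to M$ in a smooth scheme.

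First I would fix such an embedding $i:X\to M$; since $X$ is lci in $M$ the conormal sheaf $N^\vee=I/I^2$ is locally free of rank $c=\dim M - d$, and there is a distinguished triangle $N^\vee \to \Omega_M|_X \to L_X^\bullet \to$, so dually $T_X^\vir=[T_M|_X]-[N]$ in $K^0(X)$ and hence $c(T_X^\vir)=c(T_M|_X)\cdot c(N)^{-1}$. The key geometric input is that, because the embedding is regular, the intrinsic normal cone $\Cc_X$ is (the stack quotient of) the ordinary normal bundle $N$, so the normal cone $C_{X/M}$ coincides with $N$ as a cone over $X$; consequently $s(X,M)=s(C_{X/M},M)$ computed via Segre classes is simply the Segre class of the bundle $N$, namely $s(N)\cap[X]=c(N)^{-1}\cap[X]$. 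Plugging this into the definition gives $c_F(X)=c(T_M|_X)\cdot c(N)^{-1}\cap[X]=c(T_X^\vir)\cap[X]$; taking the degree-zero component (equivalently, extracting the $A_0$ part, which picks out the degree-$d$ piece of $c(T_X^\vir)$ since $[X]\in A_d$) yields $\deg(c_F(X))=\int_{[X]}c_d(T_X^\vir)=e^\vir(X)$. The one point requiring a little care is the identification $s(X,M)=c(N)^{-1}\cap[X]$ in the lci case: this is standard (it is exactly Fulton \cite[B.7.4 / Example~4.2.6]{Fu}, using that the blow-up along a regularly embedded center has normal cone equal to the normal bundle), and I would simply cite it.

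For the second assertion, invariance under lci deformation is now immediate: by the first part $\deg(c_F(X_b))=e^\vir(X_b)$ for every fiber $X_b$ of a family of proper lci schemes $\pi:\XX\to B$ (such a family is in particular a family of proper virtually smooth schemes, as noted just before the theorem statement), and $e^\vir$ is deformation invariant by Theorem~\ref{divc} together with Theorem~\ref{Euler}(2) — or directly by Corollary~\ref{definv} applied to $\gamma=c_d(T^{vir}_{\XX/B})$, since $c_d(T^{vir}_{\XX/B})$ restricts to $c_d(T_{X_b}^\vir)$ on each fiber. Hence $b\mapsto \deg(c_F(X_b))$ is locally constant. Finally, if $X$ admits a smoothening $X_0$, then $X_0$ is smooth and proper, so its natural obstruction theory is the cotangent bundle, $c_F(X_0)=c(T_{X_0})\cap[X_0]$, and $\deg(c_F(X_0))=\int_{X_0}c_{d}(T_{X_0})=e(X_0)$ by the classical Gauss–Bonnet/Hopf theorem; combining this with the deformation invariance just established gives $e^\vir(X)=\deg(c_F(X))=\deg(c_F(X_0))=e(X_0)$.

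I expect the main obstacle — really the only non-formal step — to be the clean identification of Fulton's Segre term $s(X,M)$ with $c(N)^{-1}\cap[X]$ in the regularly-embedded case, and more precisely making sure the bookkeeping of which graded piece one extracts ($A_0$ versus "degree $d$ part of the total Chern class") is consistent between the cycle-level statement $c_F(X)=c(T_X^\vir)\cap[X]$ and the numerical statement $\deg(c_F(X))=\int_{[X]}c_d(T_X^\vir)$. Everything else is either a direct citation of \cite[Chapter~18, Appendix~B.7]{Fu} or an application of results already proved in this paper (Corollary~\ref{actually}, Theorem~\ref{Euler}, Theorem~\ref{divc}, Corollary~\ref{definv}).
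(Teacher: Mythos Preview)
Your proposal is correct and follows essentially the same route as the paper: the paper simply cites \cite[Ex.~4.2.6]{Fu} for the identity $c_F(X)=c(T_X^\vir)\cap[X]$ and then extracts the degree, treating the deformation-invariance and smoothing statements as immediate consequences. Your write-up just unpacks the content of that Fulton example (the identification $s(X,M)=c(N)^{-1}\cap[X]$ for a regular embedding and the resulting formula for $c(T_X^\vir)$) and spells out the deduction of the corollaries more explicitly, but the argument is the same.
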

\begin{proof} It is of course enough to prove the first statement. 
By \cite[Ex.~4.2.6]{Fu} we have $c_F(X)=c(T_X^\vir)\cap [X]$, thus
if $d=\dim(X)$, then $\deg(c_F(X))=\int_X c_d(T_X^\vir)=e^\vir(X).$
\end{proof}
 
This  is easy to see in case $X$ is the zero scheme of a regular section of a vector bundle on a smooth proper variety  $M$. We thank P.~Aluffi for pointing this out to us.
 
 \begin{Remark}
More generally, all the virtual Chern numbers of a proper lci scheme are deformation invariants and coincide with the corresponding Chern numbers of a smoothing, when one exists.
\end{Remark}

\section{Virtual Elliptic genus}
Now we want to define and study a virtual version of the  Krichever-H\"ohn elliptic genus
\cite{Kr},\cite{Hoe}. 
The definition is completely analogous to the standard definition, 
we only replace at all instances $T_X$ by  $T_X^\vir$
and the holomorphic Euler characteristic by $\chi^\vir$.
Then we show that it has  similar properties to the elliptic genus of smooth projective varieties.
In particular, if $X$ is a virtual Calabi-Yau, i.e. the virtual canonical class of $X$ vanishes, 
then the elliptic genus is a meromorphic
 Jacobi form.

\begin{Definition}
As in the previous section let $A$ be the quotient of $A^*(X)$ by $A^{>d}(X)$ and denote by the same letter classes in $A^*(X)$ and in $A$.

For a vector bundle $F$ on $X$, we put 
$$\E(F)=\bigotimes_{n\ge 1}\big(\Lambda_{-yq^{n}} F^\vee
\otimes \Lambda_{-y^{-1}q^n} F\otimes S_{q^n} (F\oplus F^\vee)\big)\in 1+q\cdot K^0(X)[y,y^{-1}]\[[q\]].$$
Note that $\E$ defines a homomorphism from the
additive group $K^0(X)$ to the multiplicative group $1+q\cdot  K^0(X)[y,y^{-1}]\[[q\]]$.
For any vector bundle $F$ on $X$ we also put 
$$\EL(F;y,q):=y^{-\rk(F)/2}\ch(\Lambda_{-y}F^\vee) \cdot \ch(\E(F))\cdot \td(F)
\in A^*(X)[y^{-1/2},y^{1/2}]\[[q\]],$$
then  the map $F\mapsto \EL(F)$ extends to a homomorphism from the  additive group of $K^0(X)$
to the multiplicative group of $A^*(X)\((y^{1/2}\))\[[q\]].$
The {\em virtual elliptic genus} of $X$ is defined by
\begin{equation}\label{elldef}
Ell^\vir(X;y,q):=y^{-d/2}\chi^\vir_{-y}(X,\E(T_X^\vir))\in \Q\((y^{1/2}\))\[[q\]].
\end{equation}
For $V\in K^0(X)$ we also put 
$Ell^\vir(X,V;y,q):=y^{-d/2}\chi^\vir_{-y}(X,\E(T_X^\vir)\otimes V)$.
By our definitions and the virtual Riemann-Roch theorem we have 
$$Ell^\vir(X;y,q)=\int_{[X]^\vir} \EL(T^\vir_X;y,q),\quad Ell^\vir(X,V;y,q)=\int_{[X]^\vir} \EL(T^\vir_X;y,q)\cdot\ch(V),$$
and we see (in the notations of \eqref{xxy}) that 
$$\EL(T^\vir_X;y,q)=y^{-d/2}{\XX}_{-y}(X)\ch(\E(T_X^\vir )),$$
in particular, by \thmref{Euler} we see that $\EL(T^\vir_X;y,q)\in A[y^{1/2},y^{-1/2}]\[[q\]]$.
Finally for every  $k\in \Z_{\ge 0}$, $a\in A^k(X)$, we put
$Ell^\vir((X,a);y,q):=\int_{[X]^\vir} \EL(T^\vir_X;y,q)
\cdot a\in \Q[y^{1/2},y^{-1/2}]\[[q\]].$
From the definitions it is clear that
$Ell^\vir(X;y,q)|_{q=0}=y^{-d/2} \chi^\vir_{-y}(X).$

For $z\in \C$, $\tau\in \hh:=\big\{ \tau\in \C\bigm| \Im(\tau)>0\big\}$, 
we write 
\begin{align*}
\EL(F;z,\tau)&:=\EL(F;e^{2\pi i z},e^{2\pi i\tau}),\
Ell^\vir(X;z,\tau):=Ell^\vir(X;e^{2\pi i z},e^{2\pi i\tau}),\\
Ell^\vir(X,V;z,\tau)&:=Ell^\vir(X,V;e^{2\pi i z},e^{2\pi i\tau}),\ 
Ell^\vir((X,a);z,\tau):=Ell^\vir((X,a);e^{2\pi i z},e^{2\pi i\tau}).
\end{align*}
Let 
$$\theta(z,\tau):=q^{1/8}\frac{1}{i}(y^{1/2}-y^{-1/2})
\prod_{l=1}^\infty (1-q^l)(1-q^ly)(1-q^ly^{-1}),\qquad q=e^{2\pi i\tau}, \ y=e^{2\pi  i z}.$$ be the Jacobi theta function \cite[Chap.~V]{Cha}.
Let $f_1,\ldots,f_r$ be the Chern roots of $F$.
Then it is  shown e.g. in \cite[Prop.~3.1]{BL}, that 
\begin{equation}
\label{elltheta}
\EL(F;z,\tau):=\prod_{k=1}^r f_k\frac{\theta(\frac{f_k}{2\pi i}-z,\tau)}{\theta(\frac{f_k}{2\pi i},\tau)}.
\end{equation}
Let $B$ be the subring of $A\otimes \C$ generated by the Chern classes of 
$T_X^\vir$ and  a given element $a\in A^k(X)$. This is a finite dimensional $\C$-vector space.
It is easy to see that $\prod_{k=1}^r \frac{f_k}{\theta(\frac{f_k}{2\pi i},\tau)}$
defines  a holomorphic map from $\hh$ to the group of invertible elements $B^{\times}$ of $B$. Similarly $h:=\prod_{k=1}^r \theta(\frac{f_k}{2\pi i}-z,\tau)$ 
defines a holomorphic map from $\hh\times \C$ to $B$. 
Write $\Gamma:=\Z+\Z\tau$. Then we see
that the part of $h$ in $A^0(X)$ is given by $\theta(-z,\tau)$, which is nonzero
on $(\C\setminus\Gamma)\times \hh$. 
It follows that $\EL(T_X^\vir;z,\tau)$ is a holomorphic map 
$(\C\setminus\Gamma)\times \hh\to B$, and thus 
$Ell((X,a);z,\tau)$ and $Ell(X;z,\tau)$ are meromorphic functions from $\C\times \hh$ to $\C$.

\end{Definition}

Now we want to see that   that the virtual elliptic genus is a weak Jacobi form 
with character (see \cite[p.~104]{EZ}, for the definition in the case without character), as in the case of the usual elliptic genus of 
compact complex manifolds (see \cite{Hoe}, \cite{BL}).
We do not know whether (1) of \thmref{Jacobi} below was already known before in the case of compact complex manifolds.

\begin{Theorem} \label{Jacobi}
\begin{enumerate}
\item Fix $k\in \Z_{\ge 0}$, and let $a\in A^k(X)$ with $c_1(K^\vir_X)\cdot a\cap [X]^\vir=0$ in $A_{d-k-1}(X)$. 
Then $Ell^\vir((X,a);z,\tau)$ is a weak Jacobi form of weight $-k$ and index $d/2$ with character. Furthermore if $k>0$, then $Ell^\vir((X,a);0,\tau)=0.$
\item In particular if $X$ is a virtual Calabi-Yau manifold of expected dimension $d$, then 
$Ell^\vir(X;z,\tau)$ is a weak Jacobi form of weight $0$ and index $d/2$ with character. Furthermore $Ell^\vir(X;0,\tau)=e^\vir(X).$
\end{enumerate}
\end{Theorem}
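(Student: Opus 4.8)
The strategy is to reduce the statement to the classical transformation behaviour of the function $\EL$ at the level of Chern roots, namely the product formula \eqref{elltheta}, and then integrate against $[X]^\vir$. Concretely, fix a closed embedding and a global resolution of $E^\bullet$, so that $T_X^\vir$ has well-defined Chern roots $f_1,\dots,f_n$ (with the formal rules of \S2.4 governing the virtual/negative part); by \eqref{xxy} and \thmref{Euler}, working in the finite-dimensional ring $B\subset A\otimes\C$ generated by the Chern classes of $T_X^\vir$ and $a$, the class $\EL(T_X^\vir;z,\tau)$ is a holomorphic $B$-valued function on $(\C\setminus\Gamma)\times\hh$, and $Ell^\vir((X,a);z,\tau)=\int_{[X]^\vir}\EL(T_X^\vir;z,\tau)\cdot a$ is a $\C$-valued meromorphic function. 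The first task is therefore to establish, at the level of the $B$-valued function $\EL(F;z,\tau)$, the quasi-periodicity in $z$ under $z\mapsto z+1$ and $z\mapsto z+\tau$, and the modularity under $\tau\mapsto -1/\tau$, $z\mapsto z/\tau$ — exactly as in the smooth case (cf.\ \cite{BL}, \cite{Hoe}) — keeping track of the $\theta$-function multiplier which produces a factor $e^{2\pi i c_1(K_X^\vir)\cdot(\,\cdot\,)}$-type correction.

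The key point is that in the smooth case the anomaly under the lattice translation $z\mapsto z+\tau$ is governed by $c_1(T_X)=-c_1(K_X)$: translating each $f_k$-factor by $\tau$ in \eqref{elltheta} and using $\theta(w+\tau,\tau)=-q^{-1/2}y^{-1}\theta(w,\tau)$ (with $y=e^{2\pi i w}$) contributes an overall factor that is a power of $q,y$ times $\exp\!\big(\text{linear in }\sum f_k\big)=\exp\!\big(\text{linear in }c_1(T_X^\vir)\big)$. Thus, after multiplying by $a$ and capping with $[X]^\vir$, the extra term is a multiple of $c_1(K_X^\vir)\cdot a\cap[X]^\vir$, which vanishes by hypothesis; what survives is precisely the automorphy factor $e^{-2\pi i d(z+\tau/2)}$ (plus the standard character) of a weak Jacobi form of index $d/2$. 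The behaviour under $z\mapsto z+1$ is unconditional, and the modular transformation in $\tau$ is handled the same way, using the transformation law of $\theta$ under $SL_2(\Z)$ and again isolating the $c_1(K_X^\vir)\cdot a$-term. The weight $-k$ comes from the homogeneity: $a\in A^k(X)$ and the pairing with $[X]^\vir\in A_d(X)$ forces one to extract the degree-$(d-k)$ part of $\EL$, and under $\tau\mapsto-1/\tau$ that part scales by $\tau^{d-k}\cdot\tau^{-?}$; the careful bookkeeping (as in \cite{BL}) yields weight $-k$. Holomorphicity at the cusp $q\to 0$ (``weak'') follows from $\EL(T_X^\vir;y,q)\in A[y^{1/2},y^{-1/2}]\[[q\]]$, already noted in the excerpt. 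Finally, for $k>0$ the evaluation $Ell^\vir((X,a);0,\tau)=0$ follows because at $z=0$ the factor $\Lambda_{-y}F^\vee|_{y=1}$ forces $\EL$ to begin in positive cohomological degree (its $A^0$-part vanishes), so its degree-$(d-k)$ component with $k>0$ still has no term of the right codimension to pair nontrivially once multiplied by $a\in A^k$; for $k=0$, $X$ Calabi-Yau, at $z=0,q=0$ one recovers $y^{-d/2}\chi^\vir_{-y}(X)|_{y=1}=e^\vir(X)$ by Corollary \ref{actually}.

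Statement (2) is then the special case $k=0$, $a=1$: the hypothesis $c_1(K_X^\vir)\cdot a\cap[X]^\vir=0$ becomes $c_1(K_X^\vir)\cap[X]^\vir=0$, which holds since $X$ is a virtual Calabi-Yau (or, as remarked after the definition, under the weaker assumption $c_1(K_X^\vir)\cap[X]^\vir=0$), giving a weak Jacobi form of weight $0$ and index $d/2$; and $Ell^\vir(X;0,\tau)=Ell^\vir(X;y,q)|_{y=1,q=0}=\chi^\vir_{-1}(X)=e^\vir(X)$ by the remark following Corollary \ref{sym} together with Corollary \ref{actually}.

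\textbf{Main obstacle.} The genuinely delicate step is the translation $z\mapsto z+\tau$: one must verify that \emph{all} of the anomaly, beyond the universal Jacobi automorphy factor, is proportional to $c_1(K_X^\vir)$ — i.e.\ that no higher Chern classes of $T_X^\vir$ enter — and that multiplying by $a\in A^k(X)$ and capping with $[X]^\vir$ kills exactly the term $c_1(K_X^\vir)\cdot a\cap[X]^\vir$ and nothing extraneous. This is the same computation as in the smooth case, but it has to be redone with $T_X^\vir$ a virtual (possibly non-effective) class, so one has to make sure the formal manipulations with Chern roots in the quotient ring $A$ (where $A^{>d}=0$) are legitimate throughout; since everything in sight lies in the finite-dimensional ring $B$ and $\theta$ is entire in its first argument, this causes no real trouble, but it is where the proof needs to be written with care.
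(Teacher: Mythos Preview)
Your strategy matches the paper's: express $\EL(T_X^\vir;z,\tau)$ via the theta-product \eqref{elltheta}, read off the transformation behaviour from the standard identities for $\theta$, observe that the deviation from the Jacobi automorphy factor is controlled by an exponential in $c_1(K_X^\vir)$, and kill it with the hypothesis after multiplying by $a$ and capping with $[X]^\vir$. The paper carries out the modular step $\tau\mapsto -1/\tau$ explicitly by introducing an auxiliary rescaling $\EL_\tau(G;z,\tau):=\sum_{m}\tau^{-m}[\EL(G;z,\tau)]_m$ to keep track of cohomological degrees; this is precisely where the weight $-k$ is read off and where the correction $e^{zc_1(K_X^\vir)/\tau}$ appears and is disposed of by the hypothesis. (You locate the $c_1$-anomaly instead in the translation $z\mapsto z+\tau$; the paper asserts that this particular transformation already holds at the level of $\EL(F)$ without correction. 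In any case the same hypothesis handles both, so your ``main obstacle'' is correctly identified.)

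There is, however, one genuine gap in your outline: the evaluation at $z=0$. You write $Ell^\vir(X;0,\tau)=Ell^\vir(X;y,q)|_{y=1,\,q=0}=e^\vir(X)$, but setting $q=0$ is not what is being claimed; one must show $Ell^\vir(X;0,\tau)=e^\vir(X)$ for \emph{every} $\tau$, i.e.\ that the function is constant in $\tau$ once $z=0$. The missing ingredient is the identity
\[
\ch(\E(F))\big|_{y=1}\;=\;\prod_{n\ge 1}\ch\!\big(\Lambda_{-q^n}(F\oplus F^\vee)\big)\,\ch\!\big(S_{q^n}(F\oplus F^\vee)\big)\;=\;1,
\]
valid for all $q$; combined with \thmref{Euler}(3), which gives $\XX_{-1}(X)=c_d(T_X^\vir)$ in $A$, this forces $\EL(T_X^\vir;0,\tau)=c_d(T_X^\vir)$ independently of $\tau$. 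Then $Ell^\vir((X,a);0,\tau)=\int_{[X]^\vir}c_d(T_X^\vir)\cdot a$, which equals $e^\vir(X)$ for $a=1$ and vanishes for $a\in A^k(X)$ with $k>0$ (the product has degree $d+k>d$). Your argument for the $k>0$ case---that $\Lambda_{-y}F^\vee|_{y=1}$ forces $\EL$ to begin in positive cohomological degree---is not sharp enough: you need $\EL(T_X^\vir;0,\tau)$ concentrated in degree exactly $d$, not merely in positive degree. The same computation shows holomorphicity of $Ell^\vir((X,a);z,\tau)$ at $z=0$, and then the lattice translations (which you already have) extend holomorphicity across all of $z\in\Z+\Z\tau$; this step should also be mentioned.
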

\begin{proof}
First we want to show the  transformation properties.
By definition $Ell^\vir(X;z,\tau)=Ell^\vir((X,1);z,\tau)$, thus  it suffices to prove 
them for $Ell^\vir((X,a);z,\tau)$ for $a\in A^k(X)$.
The proof is a modification of that of \cite[Thm.~3.2]{BL}.
We have to show the equations
\begin{align}
\label{one}
Ell^\vir((X,a);z,\tau+1)&=Ell^\vir((X,a);z,\tau),\\
\label{two}Ell^\vir((X,a);z+1,\tau)&=(-1)^d Ell^\vir((X,a);z,\tau),\\
\label{three}Ell^\vir((X,a);z+\tau,\tau)&=(-1)^d e^{-\pi i d(\tau+2z)} Ell^\vir((X,a);z,\tau),\\
\label{four}Ell^\vir\left((X,a);\frac{z}{\tau},-\frac{1}{\tau}\right)&=\tau^{-k}e^{\frac{\pi i d z^2}{\tau}} Ell^\vir((X,a);z,\tau).
\end{align}
We have $T^\vir_X=[E_0-E_1]$, thus $\EL(T_X^\vir)=\EL(E_0)/\EL(E_1)$.
From \eqref{elltheta} and the standard identities (cf.\cite[V(1.4), V(1.5)]{Cha}) 
\begin{equation}\label{thtr}
\theta(z+1,\tau)=-\theta(z,\tau),\quad \theta(z+\tau,\tau)=-e^{-2\pi i z-\pi i\tau} \theta(z,\tau),\quad \theta(z,\tau+1)=\theta(z,\tau),
\end{equation}
we see that \eqref{one}, \eqref{two}, \eqref{three}  are satisfied for any vector bundle $F$, if we replace
$Ell^\vir((X,a);z,\tau)$ by $\EL(F;z,\tau)$ and $d$ by $\rk(F)$. As $\rk(E_0)-\rk(E_1)=d$, they also hold if we instead only replace $Ell^\vir((X,a);z,\tau)$
by $\EL(T_X^\vir;z,\tau)$. Thus they are also true for $Ell^\vir((X,a);z,\tau)=\int_{[X]^\vir} \EL(T_X^\vir;z,\tau)\cdot a$.

For a ring $R$, and $\alpha\in A^*(X)\otimes R$ denote by $[\alpha]_m$  the part  in $A^m(X)\otimes R$,
and for $G\in K^0(X)$, write $$\EL_\tau(G;z,\tau):=\sum_{m\ge 0}\tau^{-m} \left[\EL\left(G;z,\tau\right)\right]_m.$$
For  $F$ a vector bundle on $X$ of rank $r$ with Chern roots  $f_1,\ldots,f_r$, 
 \eqref{elltheta} gives
\begin{align*}\EL_\tau\left(F,\frac{z}{\tau},-\frac{1}{\tau}\right)&=
\prod_{l=1}^r\frac{f_l}{\tau}\frac{\theta(-\frac{z}{\tau}+\frac{f_l}{2\pi i \tau},-\frac{1}{\tau})}{\theta(\frac{f_l}{2\pi i \tau},-\frac{1}{\tau})}\\
&=\tau^{-r}\prod_{l=1}^r \Big(e^{\frac{-zf_l}{\tau}} f_l \frac{e^{\frac{\pi i z^2}{\tau}}
\theta(-z+\frac{f_l}{2\pi i},{\tau})}{\theta(\frac{f_l}{2\pi i},{\tau})}\Big)\\
&=\tau^{-r}e^{-\frac{zc_1(F)}{\tau}}e^{\frac{\pi i rz^2}{\tau}}\EL\left(F;z,\tau\right).
\end{align*}
Here the first and third line are obvious from the definitions, and the second line follows from the transformation properties  of $\theta$ by a two line  computation
(cf. \cite[eq. (9)]{BL}).
Thus 
\begin{align*}\EL_\tau\left(T^\vir_X,\frac{z}{\tau},-\frac{1}{\tau}\right)&=
\frac{\EL_\tau\left(E_0,\frac{z}{\tau},-\frac{1}{\tau}\right)}{\EL_\tau\left(E_1,\frac{z}{\tau},-\frac{1}{\tau}\right)}
=\tau^{-d}e^{\frac{zc_1(K_X^\vir)}{\tau}}e^{\frac{\pi i dz^2}{\tau}}\EL\left(T^\vir_X;z,\tau\right).
\end{align*}
By the definition of $\EL_\tau\left(T^\vir_X;z,\tau\right)$, we thus have 
$$\left[\EL\left(T^\vir_X,\frac{z}{\tau},-\frac{1}{\tau}\right)\right]_{d-k}=\tau^{-k}e^{\frac{\pi i dz^2}{\tau}}\left[e^{\frac{zc_1(K_X^\vir)}{\tau}}\EL\left(F;z,\tau\right)\right]_{d-k}.$$
As $a\cap [X]^\vir\in A_{d-k}(X)$, we thus get
\begin{align*}
Ell^\vir\left((X,a);\frac{z}{\tau},-\frac{1}{\tau}\right)
&=\tau^{-k}e^{\frac{\pi i dz^2}{\tau}}\int_{[X]^\vir}\Big[e^{\frac{zc_1(K_X^\vir)}{\tau}}\EL\left(F;z,\tau\right)\Big]_{d-k}\cdot a\\
&=\tau^{-k}e^{\frac{\pi i dz^2}{\tau}}\int_{[X]^\vir}\left[\EL\left(F;z,\tau\right)\right]_{d-k}\cdot a =\tau^{-k}e^{\frac{\pi i d z^2}{\tau}} Ell^\vir((X,a);z,\tau),
\end{align*}
where in the second line we have used that $K_X^\vir\cdot a\cap [X]^\vir=0$.
This shows that $Ell((X,a);z,\tau)$ has the  transformation properties of a Jacobi form of weight $-k$ and index $d/2$ with character.

Finally we show that $Ell((X,a);z,\tau)$ is regular on $\C\times \hh$ and at infinity and compute $Ell((X,a);0,\tau)$, $Ell(X;0,\tau)$.
Before the statement of \thmref{Jacobi} we saw that  
$$Ell^\vir((X,a);y,q):=\int_{[X]^\vir} {\XX}_y(X)\cdot  \ch(\E(T_X^\vir)
\cdot a\in \Q[y^{1/2},y^{-1/2}]\[[q\]],$$ i.e. $Ell^\vir((X,a);z,\tau)$ is holomorphic at infinity.
By \thmref{Euler}, we have ${\XX}_{y}(X)\in A[y]$ and ${\XX}_{-1}(X)=c_d(T_X^\vir)\in A$. By definition
$$\ch(\E(F))|_{y=1}=\prod_{n\ge 1} \ch\big(\Lambda_{-q^n} (F \oplus F^\vee)\big)
\ch(S_{q^{n}}(F\oplus F^\vee))=1$$ for all $F\in K^0(X)$.
Thus we get  for $a\in A^k(X)$  that $Ell^\vir((X,a);z,\tau)$ is holomorphic at $z=0$ and 
\begin{equation}\label{z0}Ell^\vir((X,a);0,\tau)=\int_{[X]^\vir} c_d(T_X^{\vir}) \cdot a=\begin{cases} 0 & k>0, \\ e^\vir(X)& a=1.\end{cases}
\end{equation}
Finally we can see that $Ell^\vir((X,a);z,\tau)$ is holomorphic  on $\C\times\hh$:
We had seen before that it is holomorphic on $(\C\setminus \Gamma)\times\hh$, and just saw  that it is holomorphic at $z=0$. Then \eqref{two} and \eqref{three} show that it is holomorphic at all $z\in \Gamma$.
\end{proof}

\begin{Proposition} Let $X$ be proper and $a\in A^*(X)$. Then the virtual elliptic genus $Ell^\vir((X,a);y,q)$ is deformation invariant. In particular if $X$ is a smoothable lci scheme with the natural osbtruction theory, then the virtual ellipitic genus $Ell^\vir(X;y,q)$ coincides with the elliptic genus of a smoothening.
\end{Proposition}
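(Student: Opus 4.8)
The plan is to reduce the statement to Corollary~\ref{definv}, applied coefficientwise in the variables $q$ and $y^{1/2}$. The structural point to isolate first is this: by the Definition of $\E$ and $\EL$, the class $\EL(T_X^\vir;y,q)$ is obtained from $T_X^\vir\in K^0(X)$ by applying a fixed operation assembled out of $\ch$, $\td$ and the homomorphisms $\Lambda_{\bullet}$, $S_{\bullet}$, together with multiplication and division in the ring $A^*(X)\((y^{1/2}\))\[[q\]]$. Each of these ingredients commutes with pullback along arbitrary morphisms. Hence, for a closed embedding $g\colon Z\to Z'$ of schemes, $g^*$ commutes with the formation of $\EL$; in particular, for a family of proper virtually smooth schemes $\pi\colon\XX\to B$, using $i_b^*T_{\XX/B}^\vir=T_{X_b}^\vir$, one gets $i_b^*\bigl(\EL(T_{\XX/B}^\vir;y,q)\bigr)=\EL(T_{X_b}^\vir;y,q)$.

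Next I would fix such a family and a class $a\in A^*(\XX)$, and put $a_b:=i_b^*a$. Set $\gamma:=\EL(T_{\XX/B}^\vir;y,q)\cdot a\in A^*(\XX)\((y^{1/2}\))\[[q\]]$, and for each monomial $y^{j/2}q^n$ let $\gamma_{j,n}\in A^*(\XX)$ be its coefficient. By the previous paragraph and the definition $Ell^\vir((X_b,a_b);y,q)=\int_{[X_b]^\vir}\EL(T_{X_b}^\vir;y,q)\cdot a_b$, the coefficient of $y^{j/2}q^n$ in $Ell^\vir((X_b,a_b);y,q)$ equals $\int_{[X_b]^\vir}i_b^*\gamma_{j,n}$, which is locally constant in $b$ by Corollary~\ref{definv}. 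Since this holds for all $j,n$, the series $Ell^\vir((X_b,a_b);y,q)$ is locally constant in $b$, which is the asserted deformation invariance.

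For the final assertion, let $X$ be a proper lci scheme with its natural obstruction theory; then $[X]^\vir=[X]$ and $T_X^\vir=(L_X^\bullet)^\vee$, and when $X$ happens to be smooth this reduces to $T_X$, so $Ell^\vir(X;y,q)=\int_X\EL(T_X;y,q)$ is the ordinary elliptic genus of $X$ (cf.\ \cite{Hoe},\cite{BL}). Now if $X_0$ is a smoothening of $X$, there is by definition a family of proper lci schemes $\pi\colon\XX\to B$ — a fortiori a family of proper virtually smooth schemes — and closed points $b,b_0$ with $X_b\cong X$ and $X_{b_0}\cong X_0$; taking $B$ connected, the deformation invariance just established gives that $Ell^\vir(X;y,q)=Ell^\vir(X_0;y,q)$, which is the ordinary elliptic genus of the smoothening $X_0$.

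I do not foresee a genuine difficulty here: the only point needing care is the passage from the single-class statement of Corollary~\ref{definv} to a statement about an entire formal power series, handled by applying the corollary separately to each coefficient $\gamma_{j,n}$, together with the (immediate) verification that $i_b^*$ commutes with the formation of $\EL$, which follows from the functoriality of $\ch$, $\td$, $\Lambda_{\bullet}$ and $S_{\bullet}$.
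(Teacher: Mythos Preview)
Your proposal is correct and follows essentially the same approach as the paper, which simply states that the result is a direct consequence of the definition and of Corollary~\ref{definv}. You have merely made explicit the coefficientwise application of Corollary~\ref{definv} and the compatibility of $\EL$ with pullback, which the paper leaves to the reader.
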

\begin{proof} This is a direct consequence of the definition and of Lemma \ref{definv}.
\end{proof}

\section{Virtual localization}
In this section let $X$ be a proper scheme over $\C$ with a $\C^*$-action and an equivariant $1$-perfect obstruction theory. We also assume that $X$ admits
an equivariant embedding into a nonsingular variety.
We denote by $K^0_{\C^*}(X)$, the Grothendieck group of equivariant vector bundles on $X$.
We  combine the virtual Riemann-Roch formula with the virtual localization of \cite{GP}
to obtain a localization formula expressing $\chi^\vir(X,V)$, in terms of the equivariant virtual holomorphic Euler characteristics on fixpoint schemes. 

Let $Z$ be a scheme on which $\C^*$ acts trivially.
Let $\ve$ be a variable. Let $B$ be a vector bundle on $Z$ with a $\C^*$-action. 
Then $B$ decomposes as a finite direct sum 
\begin{equation}\label{decomp} B=\bigoplus_{k\in \Z}B^k
\end{equation}
of $\C^*$-eigenbundles  $B^k$ on which $t\in\C^*$ acts by  $t^k$.
We identify 
$B$ with $\sum_{k} B^k e^{k\ve}\in K^0(X)\[[\ve\]].$ 
 This identifies $K^0_{\C^*}(X)$ with a subring of  $K^0(X)\[[\ve\]]$.
Now let again $B$ be a $\C^*$-equivariant vector bundle on $Z$ with decomposition \eqref{decomp} into eigenspaces.
We 
denote 
$$B^\fix:=B^0,\qquad B^\mov:=\oplus_{k\ne 0} B^k.$$
We put 
$\Lambda_{-1} B:=\sum_{i\ge 0} (-1)^i \Lambda^iB.$
If $B=B^\mov$, then $\Lambda_{-1}B$ is  invertible  in $K^0(Z)\((\ve\))$.
Thus if $C\in K^0_{\C^*}(Z)$ is of the form $C=A-B$, with
$A=A^\mov$, $B=B^\mov$, then $\Lambda_{-1}C:=\Lambda_{-1}A
/\Lambda_{-1}B$ is an invertible element in $K^0(Z)\((\ve\))$.

Now assume that $Z$ is proper and  has a $1$-perfect obstruction theory. Let $\oo_Z^\vir$ be the corresponding 
virtual structure sheaf. Let 
$p_*^\vir:=K^0(Z)\((\ve\))\to \Q\((\ve\))$ be the $\Q\((\ve\))$-linear extension of 
$\chi^\vir(Z,\bullet):K^0(X)\to\Z$.

We also recall some basic facts about equivariant Chow groups. 
Let $A^*_{\C^*}(Z)$ be the  equivariant Chow ring, this can be canonically identified with $A^*(Z)[\ve]$.
We extend the Chern character $\ch:K^0(Z)\to A^*(Z)$, by 
$\Q\((\ve\))$-linearity to $\ch:K^0(Z)\((\ve\))\to A^*(Z)\((\ve\))$.
With our identification of $K^0_{\C^*}(Z)$ with a subring of $K^0(Z)\[[\ve\]]$,
the restriction to $K^0_{\C^*}(Z)$ is the equivariant Chern character.
For $V\in K^0_{\C^*}(Z)$ let $\Eu(V)\in A^*_{\C^*}(Z)$ be the equivariant Euler class, and $\td(V)$ the equivariant Todd genus. By definition, we have 
$\ch(\Lambda_{-1} V^\vee)=\Eu(V)/\td(V)$.
Let $p_*:A_*^{\C^*}(Z)=A_*(Z)[\ve]\to \Q[\ve]$ be the equivariant pushforward to a point; it is 
$\Q[\ve]$--linear and we denote by the same letter its  $\Q\((\ve\))$--linear extension.
As the action of $\C^*$ on $Z$ is trivial, $\td(T_Z^\vir)$ and $[Z]^\vir$ are
$\C^*$-invariant. Thus $\alpha\mapsto p_*( \td(T_Z^\vir)\cdot \alpha\cap [Z]^\vir)$ is $\Q\((\ve\))$-linear. As $\int_{[Z]^\vir} \ch(V) \td(T_Z^\vir)=\chi^\vir(Z,V)$ for $V\in K^0(Z)$, it follows that 
\begin{equation}\label{intt}
p_*^\vir(V)=p_*(\ch(V) \td(T_Z^\vir)\cap [Z]^\vir) ,\quad \hbox{for }V\in K^0(Z)
\((\ve\)).
\end{equation}

We briefly recall the  setup of \cite{GP}.
We assume that $X$ admits an equivariant global embedding into a nonsingular scheme $Y$ with $\C^*$ action.
Let $I$ the ideal sheaf of $X$ in $Y$, assume that 
$\phi:E^\bullet \to [I/I^2\to \Omega_Y]$ is a map of complexes.
Assume that the action of $\C^*$ lifts to $E^\bullet$ and $\phi$ is equivariant.
Then \cite{GP} define an equivariant fundamental class $[X]^\vir$ in 
the equivariant Chow group $A_d^{\C^*}(X)$.
Let $X^f$ be the maximal $\C^*$-fixed closed subscheme of $X$.
For nonsingular $Y$, $Y^f$ is the nonsingular set-theoretic fixpoint locus, and $X^f$ is the scheme-theoretic intersection
$X^f=X\cap Y_f$.
Let  $Y^f:=\bigcup_{i\in S} Y_i$ be the decomposition into irreducible components and $X_i=X\cap Y_i$.
The $X_i$ are possibly reducible.
\cite{GP} define a canonical  obstruction theory on $X_i$. Let $[X_i]^\vir$ be the corresponding virtual fundamental class and
$\oo_{X_i}^\vir$ the corresponding virtual structure sheaf. 
The {\em virtual normal bundle} $N_i^\vir$ of $X_i$ is defined by
$N_i^\vir:=(T_X^\vir|_{X_i})^\mov$.

\begin{Proposition}[weak K-theoretic localization]\label{loc}
Let $V\in K_\C(X)$ and let $\widetilde V\in K^0_\C(X)$ be an equivariant lift of $V$. Denote by $\widetilde V_i$ the restriction of $\widetilde V$ to $X_i$ and
$p_i:X_i\to pt$ the projection.
Put 
$$\chi^\vir(X,\widetilde V,\ve):=\sum_{i} p_{i*}^\vir\big(\widetilde V_{i}/\Lambda_{-1}(N_{i}^\vir)^\vee)\big).$$ 
Then $\chi^\vir(X,\widetilde V,\ve)\in \Q\[[\ve\]]$ and 
$\chi^\vir(X,V)=\chi^\vir (X,\widetilde V,0).$
 \end{Proposition}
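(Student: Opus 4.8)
The plan is to deduce the statement from two ingredients already in hand: the non-equivariant virtual Hirzebruch--Riemann--Roch theorem (\corref{HRR}), and the virtual localization formula of \cite{GP} for the equivariant virtual class in equivariant Chow groups; everything in between is the bookkeeping collected in \eqref{intt} and in the identities above. First I would unwind the definition of $\chi^\vir(X,\widetilde V,\ve)$. Each $X_i$ carries the trivial $\C^*$-action, and $\widetilde V_i/\Lambda_{-1}(N_i^\vir)^\vee\in K^0(X_i)\((\ve\))$ (the denominator being invertible there since $N_i^\vir$ is purely moving), so \eqref{intt} applied to $X_i$ gives
$$\chi^\vir(X,\widetilde V,\ve)=\sum_i p_{i*}^\vir\!\Big(\frac{\widetilde V_i}{\Lambda_{-1}(N_i^\vir)^\vee}\Big)=\sum_i p_{i*}\!\Big(\ch\!\Big(\frac{\widetilde V_i}{\Lambda_{-1}(N_i^\vir)^\vee}\Big)\td(T_{X_i}^\vir)\cap [X_i]^\vir\Big).$$

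Next I would rewrite the integrand on each $X_i$. By the construction of \cite{GP}, the fixed part of $T_X^\vir|_{X_i}$ is $T_{X_i}^\vir$ and its moving part is $N_i^\vir$, so $\iota_i^*\td(T_X^\vir)=\td(T_X^\vir|_{X_i})=\td(T_{X_i}^\vir)\,\td(N_i^\vir)$, where $\iota_i:X_i\hookrightarrow X$. Combining this with the identity $\ch(\Lambda_{-1}W^\vee)=\Eu(W)/\td(W)$ for $W=N_i^\vir$, together with functoriality and multiplicativity of $\ch$, one obtains
$$\ch\!\Big(\frac{\widetilde V_i}{\Lambda_{-1}(N_i^\vir)^\vee}\Big)\td(T_{X_i}^\vir)=\frac{\iota_i^*\big(\ch(\widetilde V)\,\td(T_X^\vir)\big)}{\Eu(N_i^\vir)}.$$
Substituting this back, and using $p_{i*}=p_*\circ\iota_{i*}$ with the projection formula (the class $\ch(\widetilde V)\td(T_X^\vir)$ is pulled back from $X$), the sum collapses:
$$\chi^\vir(X,\widetilde V,\ve)=p_*\!\Big(\ch(\widetilde V)\,\td(T_X^\vir)\cap\sum_i\iota_{i*}\frac{[X_i]^\vir}{\Eu(N_i^\vir)}\Big)=p_*\!\big(\ch(\widetilde V)\,\td(T_X^\vir)\cap [X]^\vir\big),$$
the last step being precisely the virtual localization theorem of \cite{GP}, which identifies the a priori localized class $\sum_i\iota_{i*}\big([X_i]^\vir/\Eu(N_i^\vir)\big)$ with the honest equivariant class $[X]^\vir\in A_d^{\C^*}(X)$.

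From the resulting formula $\chi^\vir(X,\widetilde V,\ve)=p_*\big(\ch(\widetilde V)\td(T_X^\vir)\cap [X]^\vir\big)$ both assertions follow. The right-hand side is the equivariant pushforward to a point of a genuine class in the (completed) equivariant Chow group of $X$; since the equivariant Chow group of a point has no classes in positive degree, this pushforward involves only non-negative powers of $\ve$, so $\chi^\vir(X,\widetilde V,\ve)\in\Q\[[\ve\]]$, which is the first claim. For the second, setting $\ve=0$ is the map forgetting the $\C^*$-equivariant structure; it commutes with proper pushforward, with $\ch$ and with $\td$, and by \cite{GP} it sends the equivariant $[X]^\vir$ to the ordinary $[X]^\vir$ and $\widetilde V$ to $V$. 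Hence $\chi^\vir(X,\widetilde V,0)=p_*\big(\ch(V)\td(T_X^\vir)\cap [X]^\vir\big)=\int_{[X]^\vir}\ch(V)\,\td(T_X^\vir)$, which is $\chi^\vir(X,V)$ by \corref{HRR}.

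The computation itself is a routine chain of identities; the one place where genuine input enters, beyond the non-equivariant \corref{HRR}, is the appeal to \cite{GP}: it is exactly the fact that $\sum_i\iota_{i*}\big([X_i]^\vir/\Eu(N_i^\vir)\big)$ equals the non-localized class $[X]^\vir$ that makes $\chi^\vir(X,\widetilde V,\ve)$ pole-free and ensures compatibility of the whole computation with the specialization $\ve=0$. Accordingly, the main care needed is keeping track of which objects live in localized versus non-localized (and completed versus uncompleted) equivariant Chow groups, so that both the vanishing of negative powers of $\ve$ and the $\ve=0$ restriction are legitimately applied.
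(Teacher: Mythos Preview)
Your proof is correct and follows essentially the same route as the paper's: both establish the key identity $\chi^\vir(X,\widetilde V,\ve)=p_*\big(\ch(\widetilde V)\,\td(T_X^\vir)\cap [X]^\vir\big)$ by combining \eqref{intt}, the splitting $\td(T_X^\vir|_{X_i})=\td(T_{X_i}^\vir)\,\td(N_i^\vir)$, the identity $\ch(\Lambda_{-1}W^\vee)=\Eu(W)/\td(W)$, and the virtual localization of \cite{GP}, and then read off both conclusions. The only cosmetic difference is that you run the chain of equalities from the fixed-locus side towards $p_*(\ch(\widetilde V)\td(T_X^\vir)\cap [X]^\vir)$, while the paper runs it in the opposite direction; your justification that no negative powers of $\ve$ appear is phrased slightly awkwardly (what matters is simply that the class lies in $A_*(X)\[[\ve\]]$ before pushforward, as the paper states directly), but the content is the same.
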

\begin{proof} This follows by combining the virtual Riemann-Roch theorem with the virtual localization.
We will show: 
\begin{equation}\label{claim} 
p_*(\ch(\widetilde V)\cdot \td(T_{X}^\vir)\cap [X]^\vir)=\chi^\vir(X,V,\ve).
\end{equation}
This implies the Corollary:  $\ch(\widetilde V)\cdot \td(T_{X}^\vir)\cap [X]^\vir\in A_*(X)\[[\ve\]]$ and thus by \eqref{claim}  we have $\chi^\vir(X,V,\ve)\in \Q\[[\ve\]]$. Furthermore the virtual Riemann-Roch theorem gives
$$\chi^\vir(X,V)=\int_{[X^\vir]} \ch(V)\cdot \td(T_{X}^\vir)=
p_*(\ch(\widetilde V)\cdot \td(T_{X}^\vir)\cap [X]^\vir)|_{\ve=0}=\chi^\vir(X,\widetilde V,0).$$
By the localization formula of  \cite{GP}
we have
\begin{align*}
p_*(\ch(\widetilde V)&\cdot \td(T_{X}^\vir)\cap [X]^\vir)=\sum_{i} p_{i*}\big(\ch(\widetilde V_i) \td(T_{X}^\vir|_{X_i})/\Eu(N_i^\vir)\cap [X_i]^\vir\big)\\
&=\sum_i p_{i*} \big(\td(T_{X_i}^\vir) \ch\big(\widetilde V_i/\ch(\Lambda_{-1} (N_i^\vir)^\vee\big)\cap [X_i]^\vir\big)=\sum_{i} p_{i*}^\vir\big(\widetilde V_{i}/\Lambda_{-1}(N_{i}^\vir)^\vee\big).
\end{align*}
where  we have used  $\td(T_{X}^\vir|_{X_i})=\td(T_{X_i}^\vir)\td(N_i^\vir)$, and 
$\td(N_i^\vir)=\Eu(N_i^\vir)/\ch(\Lambda_{-1}(N_i^\vir)^\vee)$, and finally \eqref{intt}.
\end{proof}

\begin{Conjecture}[$K$-theoretic virtual localization]\label{Kconj}
 Let $\iota:\bigcup X_i\to X$ be the inclusion. Then 
$\oo_X^\vir=\sum_{i=1}^s\iota_*\big(\oo_{X_i}^\vir/\Lambda_{-1} (N^\vir_i)^\vee\big),$
in localized equivariant $K$-theory.
\end{Conjecture}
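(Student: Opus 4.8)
The plan is to lift the Graber--Pandharipande argument of \cite{GP} from rational Chow groups to localized equivariant $K$-theory, replacing the Euler class by the $K$-theoretic Euler class $\Lambda_{-1}(\,\cdot\,)^\vee$. The starting point is Thomason's concentration theorem: working in $K^0_{\C^*}(-)$ with the elements $1-e^{k\ve}$ ($k\neq 0$) inverted, i.e.\ in the ring where the classes $\Lambda_{-1}(B^\mov)^{\pm1}$ of the present section already live, the proper pushforward $\iota_*\colon K^0_{\C^*}(X^f)_{\mathrm{loc}}\to K^0_{\C^*}(X)_{\mathrm{loc}}$ is an isomorphism; the same holds for each $X_i$ in place of $X^f$ after replacing $X$ by a $\C^*$-invariant open neighbourhood $U_i\supset X_i$ disjoint from the other components. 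Hence there is a unique class $\alpha_i\in K^0_{\C^*}(X_i)_{\mathrm{loc}}$ with $\iota_{i*}\alpha_i=\oo_X^\vir|_{U_i}$, and the conjecture is precisely the assertion that $\alpha_i=\oo_{X_i}^\vir/\Lambda_{-1}(N_i^\vir)^\vee$.

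To identify $\alpha_i$, the next step is to treat $X^f\hookrightarrow X$ as a \emph{virtually regular} embedding: with the Graber--Pandharipande obstruction theory on $X_i$ it carries a virtual normal bundle, namely $N_i^\vir=(T_X^\vir|_{X_i})^\mov$, which is purely moving, so that $\Lambda_{-1}\bigl((N_i^\vir)^\vee\bigr)$ is invertible after localization. One then needs the $K$-theoretic counterparts of the two facts that drive the Chow computation in \cite{GP}: (i) compatibility of virtual structure sheaves with the induced virtual pullback, $\iota_i^{!,\vir}\oo_X^\vir=\oo_{X_i}^\vir$; and (ii) a virtual self-intersection formula $\iota_i^{!,\vir}(\iota_{i*}\beta)=\Lambda_{-1}\bigl((N_i^\vir)^\vee\bigr)\otimes\beta$ in $K^0_{\C^*}(X_i)_{\mathrm{loc}}$. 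Granting (i) and (ii), applying $\iota_i^{!,\vir}$ to $\iota_{i*}\alpha_i=\oo_X^\vir|_{U_i}$ yields $\Lambda_{-1}\bigl((N_i^\vir)^\vee\bigr)\otimes\alpha_i=\oo_{X_i}^\vir$, whence $\alpha_i=\oo_{X_i}^\vir/\Lambda_{-1}(N_i^\vir)^\vee$, which is the conjecture.

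To prove (i) and (ii) directly I would argue as in the proof of \propref{goodprop}, working with the cone $C(E)\subset E_1$ (through which $\oo_X^\vir=s_0^*[\oo_{C(E)}]$ is defined), its $\C^*$-invariant splitting $E_1|_{X^f}=E_1^\fix\oplus E_1^\mov$, and the Koszul resolutions that compute the relevant $\Tor$-sheaves, so that all identities become statements about perfect complexes rather than about cycles; the Graber--Pandharipande analysis of the intrinsic normal cone then identifies $(C(E)\times_X X_i)^\fix$ with the cone attached to the induced obstruction theory on $X_i$ and the moving directions with $N_i^\vir$, while the regularity of $Y_i\hookrightarrow Y$ controls the resulting excess term. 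The hard part will be exactly this: setting up a $K$-theoretic virtual-pullback formalism with enough functoriality to make sense of $\iota_i^{!,\vir}$ and to prove (i)--(ii). In Chow theory this is furnished by \cite{BF} and \cite{GP}, but here $X^f\hookrightarrow X$ is not an honest regular embedding, so the classical $K$-theoretic self-intersection formula does not apply and one must keep control of all higher Tor sheaves along the deformation to the normal cone. An alternative would be to import the needed formalism from the $[0,1]$-manifold picture, where \cite{CFK} establish a $K$-theory localization formula, combined with the (partially known, cf.\ \cite{CFK1},\cite{CFK2}) fact that virtually smooth schemes arise from $[0,1]$-manifolds.

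As a consistency check, applying the equivariant Riemann--Roch transformation $\tau$ to both sides of the conjectured identity and using $\ch(\Lambda_{-1}V^\vee)=\Eu(V)/\td(V)$, the covariance and module properties of $\tau$, the projection formula, and $\td(T_X^\vir|_{X_i})=\td(T_{X_i}^\vir)\cdot\td(N_i^\vir)$, one finds that the conjecture projects onto the Chow-theoretic virtual localization $[X]^\vir=\sum_i\iota_*\bigl([X_i]^\vir/\Eu(N_i^\vir)\bigr)$ of \cite{GP}; this is in essence the computation already carried out in the proof of \propref{loc}. Since $\tau$ is not injective on $K_0$ of singular schemes this does not prove the conjecture, but it confirms that the conjectured formula is the correct one and that no lower-order correction terms are missing.
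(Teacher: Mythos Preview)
The paper does not prove this statement: it is stated as a \emph{Conjecture}, and the only accompanying remark is that in the context of dg-schemes it has already been proven in \cite[Thm.~5.3.1]{CFK} and that ``it should be possible to adapt their proof.'' There is therefore no proof in the paper to compare against.

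Your proposal is a coherent strategy outline---Thomason concentration plus a $K$-theoretic virtual pullback satisfying (i) compatibility with virtual structure sheaves and (ii) a self-intersection/excess formula---and the logic from (i)--(ii) to the conclusion is correct. But you yourself flag the genuine gap: (i) and (ii) are \emph{assumed}, not proven. Constructing a $K$-theoretic virtual pullback $\iota_i^{!,\vir}$ for the non-regular inclusion $X_i\hookrightarrow X$ and verifying these two properties is precisely the content of the conjecture; your cone/Koszul sketch indicates where to look but does not carry out the argument. So as written this is a plausible plan, not a proof.

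Your alternative suggestion---to import the formalism from \cite{CFK}---is exactly what the paper hints at. That route is the most promising one, but it requires either working in the $[0,1]$-manifold setting or translating their deformation-to-the-normal-cone argument into the language of virtually smooth schemes, which is nontrivial and not supplied here. The Riemann--Roch consistency check at the end is correct and matches the computation in the proof of \propref{loc}, but, as you note, it does not upgrade to a proof since $\tau$ is not injective.
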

In the context of 
$DG$-schemes \conref{Kconj} has already been proven in 
\cite[Thm.~5.3.1]{CFK} and it should be possible to adapt their proof.

\begin{Corollary}
Under the assumptions of \thmref{loc} we have
\begin{enumerate}
\item For any $V\in K^0(X)$ we have (writing $\widetilde V_i:=\widetilde V|_{X_i}$
for $\widetilde V$ an equivariant lift of $V$):
$$\chi^\vir_{-y}(X,V)=\Big(\sum_{i} \chi_{-y}^{\vir}\left(X_i,\widetilde V_i\otimes \Lambda_{-y}(N^\vir_i)^\vee/\Lambda_{-1}(N^\vir_i)^\vee\right)\Big)|_{\ve=0}.$$
\item Let $n_i:=\rk(N_i^\vir)$. Then 
$$Ell^\vir(X;z,\tau)=\Big(\sum_{i} y^{-n_i/2} Ell^\vir\left(X_i,\E(N_i^\vir)\Lambda_{-y}(N_i^\vir)/\Lambda_{-1}(N_i^\vir)^\vee,z,\tau\right)\Big)|_{\ve=0}.$$
\item $e^\vir(X)=\sum_{i} e^{\vir}(X_i)$, where $e^\vir(X_i)$ is defined using 
the obstruction theory induced from $X$.
In particular, if  all the $X_i$ are smooth and the obstruction theory induced from $X$ on each $X_i$ is the cotangent bundle, then $e^\vir(X)=e(X)$.
\end{enumerate}
\end{Corollary}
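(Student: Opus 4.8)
The plan is to deduce all three parts from Proposition~\ref{loc} by feeding into it the $K$-theory class whose virtual holomorphic Euler characteristic computes the invariant in question, and then simplifying the right hand side using the splitting of $T^\vir_X$ along the fixed locus. Recall from \cite{GP} (as summarized above) that the obstruction theory induced on each $X_i$ has virtual tangent bundle $(T^\vir_X|_{X_i})^\fix$, so that in $K^0_{\C^*}(X_i)$ one has the splitting $T^\vir_X|_{X_i}=T^\vir_{X_i}+N^\vir_i$, and dually $\Omega^\vir_X|_{X_i}=\Omega^\vir_{X_i}+(N^\vir_i)^\vee$. Since $\Lambda_{-y}\Omega^\vir_X$ and $\E(T^\vir_X)$ are built functorially from the equivariant complex $E^\bullet$, they carry canonical equivariant structures, so for an equivariant lift $\widetilde V$ of $V$ the classes $\widetilde V\otimes\Lambda_{-y}\Omega^\vir_X$ and $\widetilde V\otimes\E(T^\vir_X)\otimes\Lambda_{-y}\Omega^\vir_X$ are equivariant lifts of the corresponding classes on $X$, and Proposition~\ref{loc} applies to them, coefficient by coefficient in the formal variables $y$ and $q$.

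For (1) I would apply Proposition~\ref{loc} to $V\otimes\Lambda_{-y}\Omega^\vir_X$. Because $\Lambda_{-y}$ carries sums to products, its restriction to $X_i$ is $\widetilde V_i\otimes\Lambda_{-y}\Omega^\vir_{X_i}\otimes\Lambda_{-y}((N^\vir_i)^\vee)$; using $(\Lambda_{-y}F)^\vee=\Lambda_{-y}(F^\vee)$ and the definition $\chi^\vir_{-y}(X_i,W)=\chi^\vir(X_i,W\otimes\Lambda_{-y}\Omega^\vir_{X_i})$, the $i$-th summand returned by Proposition~\ref{loc} is precisely $\chi^\vir_{-y}(X_i,\widetilde V_i\otimes\Lambda_{-y}(N^\vir_i)^\vee/\Lambda_{-1}(N^\vir_i)^\vee)$, which is (1). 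Statement (2) then comes out of (1) applied with $V$ replaced by $V\otimes\E(T^\vir_X)$: since $\E$ also carries sums to products, $\E(T^\vir_X)|_{X_i}=\E(T^\vir_X|_{X_i})=\E(T^\vir_{X_i})\cdot\E(N^\vir_i)$, so grouping the factor $\E(T^\vir_{X_i})$ together with $\Lambda_{-y}\Omega^\vir_{X_i}$ and extracting $y^{-d_i/2}$ identifies the $i$-th term with an elliptic genus of $X_i$ whose coefficient class is the $N^\vir_i$-dependent product displayed in (2); combining with the global prefactor $y^{-d/2}=y^{-d_i/2}y^{-n_i/2}$, using $d=d_i+n_i$, yields the stated formula, where $d_i$ denotes the virtual dimension of $X_i$.

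Finally, (3) follows by applying (1) with $V=\OO_X$ and specializing at $y=1$: the series $\Lambda_{-y}(N^\vir_i)^\vee/\Lambda_{-1}(N^\vir_i)^\vee$ specializes at $y=1$ to the identity class $[\OO_{X_i}]$ — here one uses that $\Lambda_{-1}(N^\vir_i)^\vee$ is invertible in $K^0(X_i)\((\ve\))$ because $N^\vir_i$ is purely moving — so the $i$-th summand becomes $\chi^\vir_{-1}(X_i)=e^\vir(X_i)$, a number independent of $\ve$, and the outer substitution $\ve=0$ is vacuous; hence $e^\vir(X)=\sum_i e^\vir(X_i)$. For the last assertion, if each $X_i$ is smooth with its cotangent bundle as induced obstruction theory then $T^\vir_{X_i}=T_{X_i}$ and $[X_i]^\vir=[X_i]$, so $e^\vir(X_i)=\int_{X_i}c_{\dim X_i}(T_{X_i})=e(X_i)$ by Gauss--Bonnet; since $Y^f$ is smooth its irreducible components $Y_i$ are disjoint, so the $X_i$ are disjoint closed subschemes with $\bigcup_i X_i=X^{\C^*}$, and $\sum_i e(X_i)=e(X^{\C^*})=e(X)$ by the standard invariance of the topological Euler characteristic under removal of non-fixed $\C^*$-orbits. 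I do not expect a genuine obstacle here: the substance is entirely in Proposition~\ref{loc}, and what remains is the bookkeeping above, the only slightly delicate points being the identification $(T^\vir_X|_{X_i})^\fix=T^\vir_{X_i}$ (which is how \cite{GP} set up the induced obstruction theory) and, in (2), keeping track of duals, ranks and the half-integral powers of $y$.
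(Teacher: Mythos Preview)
Your proof is correct and follows essentially the same approach as the paper: apply Proposition~\ref{loc} to $V\otimes\Lambda_{-y}\Omega^\vir_X$ (resp.\ to $\E(T^\vir_X)\otimes\Lambda_{-y}\Omega^\vir_X$), use the splitting $T^\vir_X|_{X_i}=T^\vir_{X_i}+N^\vir_i$ and the multiplicativity of $\Lambda_{-y}$ and $\E$, and for (3) specialize (1) at $y=1$ so that the normal-bundle factors cancel, then invoke additivity of the topological Euler characteristic and vanishing of $e$ on the free locus. The paper's argument is identical in substance; your write-up is in places more explicit (the equivariance of the lifts, the rank bookkeeping $d=d_i+n_i$), and you correctly flag the dual in $\Lambda_{-y}(N^\vir_i)^\vee$ which the paper's displayed formula in (2) drops.
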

\begin{proof}
(1) By definition and \propref{loc} we get 
$$\chi_{-y}^\vir(X,V)=\chi^\vir(X,V\otimes \Lambda_{-y}(\Omega_X^\vir))=\sum_{i} p_{i*}^\vir\left(
V_i\otimes \Lambda_{-y}(\Omega_X^\vir|_{X_i})/\Lambda_{-1}(N_i^\vir)^\vee\right)|_{\ve =0}.$$
As $T_X^\vir=T_{X_i}^\vir+N_i^\vir\in K^0(X_i)$, and $\Lambda_{-y}$ is a homomorphism  we get 
\begin{align*}p_{i*}^\vir\left(V_i\otimes \Lambda_{-y}(\Omega_X^\vir|_{X_i})/\Lambda_{-1}(N_i^\vir)^\vee\right)&=
p_{i*}^\vir\left(V_i\otimes \Lambda_{-y}\Omega_{X_i}^\vir \otimes \Lambda_{-y}(N_i^\vir)^\vee/\Lambda_{-1}(N_i^\vir)^\vee\right)\\
&=\chi_{-y}^\vir \left(X_i,V_i\otimes \Lambda_{-y}(N_i^\vir)^\vee /\Lambda_{-1}(N_i^\vir)^\vee\right).\end{align*}
(2) Putting $y=e^{2\pi i z}$, $q=e^{2\pi i \tau}$, we have by definition and applying (1)
\begin{align*}
Ell^\vir(X,z,\tau)&=y^{-d/2}\chi^\vir_{-y}(X,\E(T_X^\vir))\\
&=y^{-d/2}\Big(\sum_{i} \chi_{-y}^{\vir}\left(X_i,\E(T_X^\vir) \otimes \Lambda_{-y}(N^\vir_i)^\vee/\Lambda_{-1}(N^\vir_i)^\vee\right)\Big)|_{\ve=0}\\
&=y^{-d/2}\Big(\sum_{i} \chi_{-y}^{\vir}\left(X_i,\E(T_{X_i}^\vir) \otimes \E(N^\vir_i) \otimes \Lambda_{-y}(N^\vir_i)^\vee/\Lambda_{-1}(N^\vir_i)^\vee\right)\Big)|_{\ve=0}\\&=\Big(\sum_{i} y^{-n_i/2} Ell^\vir\left(X_i,\E(N_i^\vir)\Lambda_{-y}(N_i^\vir)/\Lambda_{-1}(N_i^\vir)^\vee,z,\tau\right)\Big)|_{\ve=0}.
\end{align*}
(3) By the definition of $e^\vir(X)$ and by (1) we have 
\begin{align*}e^\vir(X)&=\chi^\vir_{-1}(X)=\sum_{i} \chi_{-1}^{\vir}\left(X_i,\Lambda_{-1}(N^\vir_i)^\vee/\Lambda_{-1}(N^\vir_i)^\vee\right)|_{\ve =0}\\&=\sum_{i} \chi_{-1}^{\vir}\left(X_i\right)=\sum_ie^\vir(X_i).\end{align*}
If all the $X_i$ are smooth with trivial obstruction theory, then    $e^\vir(X_i)=e(X_i)$, thus $e^\vir(X)=\sum_ie(X_i)$.
Put $Y:=X\setminus \bigcup_i X_i$. Then the $\C^*$ acts freely on $Y$, thus 
$e(Y)=0$. Furthermore $Y$ is open in $X$. Thus $e(X)=e(X\setminus Y)+e(Y)=\sum_ie(X_i)$.
\end{proof}

\section{Virtually smooth DM stacks}

Obstruction theories arise naturally as deformation-theoretic dimensional estimates on moduli spaces, e.g. of stable maps or stable sheaves. 
Such moduli spaces are often not schemes but DM stacks. In this section we will extend this paper's definition to the stack case, and discuss which results are still valid. A significant case where the theory works are moduli spaces of stable sheaves: in the forthcoming paper \cite{GMNY}, this will be used together with the results of  \cite{Moc} to study the invariants of  moduli spaces of sheaves on surfaces.

\subsection{Notation and conventions} 

A {\em DM stack} will be a separated algebraic stack in the sense of Deligne--Mumford of finite type over the ground field of characteristic $0$. All DM stacks will be assumed to be quasiprojective in the sense of Kresch, 
i.e. they admit  locally closed embeddings into a smooth DM stack which is proper over the base field and has projective coarse moduli space. Furthermore
we assume that they have  the resolution property;
in characteristic $0$ this last condition is implied by quasiprojectivity
 \cite[Thm.~5.3]{Kre}.
 
A DM stack $\mathcal X$ will be called {\em virtually smooth} of dimension $d$ if we have chosen a perfect obstruction theory $\phi:E^\bullet\to \tau_{\ge-1}L_\XX^\bullet$. 

The definitions for schemes are valid also in this case, yielding a virtual fundamental class $[\XX]^\vir\in A^d(\XX)$, a virtual structure sheaf $\OO_\XX^\vir\in K_0(\XX)$ and a virtual tangent bundle $T_\XX^\vir\in K^0(\XX)$.
  
If $\XX$ is proper and $V\in K^0(\XX)$ we can define 
$$\chi^\vir(\XX,V):=\chi(\XX,V\otimes \oo_{\XX}^\vir).$$

Following \defref{chiy}, we put
$\Omega_{\XX}^\vir:=(T_{\XX}^\vir)^\vee$, and define
the virtual $\chi_{-y}$-genus of $\XX$ by 
$\chi_{-y}^\vir(\XX):=\chi^\vir(\XX,\Lambda_{-y}\Omega_{\XX}^\vir)$ and, for 
$V\in K^0(\XX)$, put $\chi_{-y}^\vir(\XX,V):=\chi^\vir(\XX,V\otimes \Lambda_{-y}\Omega_{\XX}^\vir)$.
If $\chi_{-y}^\vir(\XX)$ is a polynomial, we define the virtual Euler characteristic of $\XX$ by  $e^\vir(\XX):=\chi_{-1}^\vir(\XX)$. Following \eqref{elldef}
we put $Ell^\vir(\XX,y,q):=y^{-d/2} \chi_{-y}^\vir(\XX,\E(T_{\XX}^\vir)).$

\subsection{Morphisms of virtually smooth DM stacks}

A morphism between virtually smooth DM stacks $(\XX,E_{\XX})$ and 
$(\YY,E_{\YY})$ is a pair $(f,\psi)$ where $f:\XX\to\YY$ is a morphism, and $\psi:f^*E_\YY\to E_\XX$ is a morphism in $D^b(\XX)$ such  that $$f^*\phi_\YY\circ \psi=\tau_{\ge-1}L_f\circ \phi_\XX:f^*E_\YY\to \tau_{\ge-1}L_\XX$$
and such that the mapping cone $C(\psi)$ is perfect in $[-1,0]$. 

Note that every fibre of a morphism of virtually smooth stacks is itself virtually smooth

A morphism $(f,\psi)$ of virtually smooth DM stacks is called  \'etale  if $f$ is \'etale  and $\psi$ is an isomorphism. 

\begin{Lemma}
Let $f:\XX\to\YY$ be an \'etale morphism of virtually smooth DM stacks.
\begin{enumerate} 
\item $T_{\XX}^\vir=f^*(T_\YY^\vir)$.
\item $[\XX]^\vir=f^*[\YY]^\vir$.
\item $\oo_{\XX}^\vir=f^*(\oo_{\YY}^\vir)$.
\end{enumerate}
\end{Lemma}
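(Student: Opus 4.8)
The plan is to prove the three equalities in order, deducing each from the previous one and from the compatibility built into the definition of an \'etale morphism of virtually smooth DM stacks. The key point throughout is that an \'etale morphism is flat, so pullback $f^*$ is exact on coherent sheaves and commutes with the formation of cones, normal sheaves, and Gysin maps; moreover, since $\psi: f^*E_\YY^\bullet \to E_\XX^\bullet$ is assumed to be an isomorphism in $D^b(\XX)$, the two obstruction theories literally agree after pullback.

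First I would prove (1). Choosing a global resolution $[E_\YY^{-1}\to E_\YY^0]$ of $E_\YY^\bullet$, its pullback $[f^*E_\YY^{-1}\to f^*E_\YY^0]$ is a complex of vector bundles on $\XX$ quasi-isomorphic, via $\psi$, to $E_\XX^\bullet$. Since (as recalled in \ref{enfl}) the class $T^\vir$ depends only on the obstruction theory and not on the chosen resolution, we get $T_\XX^\vir = [f^*E_\YY^0{}^\vee] - [f^*E_\YY^{-1}{}^\vee] = f^*([E_\YY^0{}^\vee]-[E_\YY^{-1}{}^\vee]) = f^*(T_\YY^\vir)$, using that $f^*: K^0(\YY)\to K^0(\XX)$ is a ring homomorphism compatible with duals.

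Next (2). Here I would run through the construction of $[\ ]^\vir$ recalled in \S\ref{vireasy}, checking \'etale base-change at each stage. The intrinsic normal cone satisfies $\CC_\XX = f^*\CC_\YY$ for $f$ \'etale (or more generally smooth), since $\CC$ is built from the deformation theory of $\XX$ over a point and commutes with \'etale base change; likewise the intrinsic normal sheaf $\Nn_\XX$ and the stack $\Ee_\XX=[E_{1,\XX}/E_{0,\XX}]$ pull back from $\YY$ because $\psi$ is an isomorphism. Therefore the cone $C(E_\XX)\subset E_{1,\XX}$ is the pullback of $C(E_\YY)\subset E_{1,\YY}$, and applying $f^*$ to the Gysin pullback along the zero section $s_0$ — using that refined Gysin maps for regular embeddings commute with flat (in particular \'etale) base change, \cite[\S1.7, \S6.2]{Fu} — gives $[\XX]^\vir = s_0^*([C(E_\XX)]) = f^*(s_0^*([C(E_\YY)])) = f^*[\YY]^\vir$.

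Finally (3). By \S\ref{vireasy}, $\oo_\XX^\vir = s_0^*([\oo_{C(E_\XX)}])$ where $s_0:\XX\to E_{1,\XX}$ is the regular (Koszul) embedding of the zero section, and the analogous formula holds for $\YY$. Since $C(E_\XX) = f^*C(E_\YY)$ from step (2) and $f$ is flat, $\oo_{C(E_\XX)} = (f_{E_1})^*\oo_{C(E_\YY)}$ where $f_{E_1}: E_{1,\XX}\to E_{1,\YY}$ is the (flat) induced map on total spaces; the $K$-theoretic Gysin map $s_0^*$ for the Koszul-resolved embedding commutes with this flat base change (the pullback of the Koszul complex is again the Koszul complex, and $\Tor$ commutes with flat base change), so $\oo_\XX^\vir = f^*(\oo_\YY^\vir)$. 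The main obstacle — really the only nontrivial point — is verifying the \'etale base-change compatibility of the virtual fundamental class in step (2) in the stack setting; once one is willing to invoke that $\CC$, $\Nn$ and the Gysin/Tor constructions all satisfy flat base change on DM stacks (which is standard and already implicitly used in \cite{BF} and \cite{GP}), the rest is formal.
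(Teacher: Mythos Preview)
Your proposal is correct and follows essentially the same approach as the paper's proof: part (1) from the definition via a chosen resolution; parts (2) and (3) by identifying $C(E_\XX)$ with the flat pullback of $C(E_\YY)$ (the paper invokes \cite[Cor.~3.9]{BF} for $\Cc_\XX\cong f^*\Cc_\YY$ where you say it is ``standard''), then applying \cite[Thm.~6.2]{Fu} for the Chow-theoretic Gysin and flat base change of $\Tor$ for the $K$-theoretic one. One small caution: your parenthetical ``or more generally smooth'' is not literally true for $\Cc$ without a correction term, so it is safest to stick to the \'etale case as stated.
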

\begin{proof}
(1) follows  directly from the definitions.

The isomorphism $f^*L_{\YY}\to L_\XX$ induces an isomorphism of abelian cone stacks ${\frak N}_{\XX}\to {\frak N}_\YY$, and, by the consequence of 
\cite[Cor.~3.9]{BF}, an isomorphism 
${\frak C}_\XX\to f^*{\frak C}_\YY$. 
Choose a global resolution $[E^{-1}\to E^{0}]$
of $E_{\YY}$ and let $[F^{-1}\to F^0]$ be its pullback to $\XX$.
The cartesian diagram 
\begin{diagram}
\Cc_\XX&\rTo&[F_1/F_0]&\rTo &\XX\\
\dTo&&\dTo&&\dTo\\
\Cc_\YY&\rTo&[E_1/E_0]&\rTo&\YY
\end{diagram}
induces a cartesian diagram 
\begin{diagram}
C_\XX&\rTo&F_1&\rTo &\XX\\
\dTo&&\dTo_{\overline  f}&&\dTo_{f}\\
C_\YY&\rTo&E_1&\rTo&\YY
\end{diagram}
where $C_\XX$ (resp.~$C_\YY$) is the inverse image of $\Cc_\XX$
(resp. $\Cc_\YY$). Let $s_0$ (resp.~$\widetilde s_0$) 
denote the zero section of 
$F_1$ (resp.~$E_1$). Since $[\YY]^\vir=\widetilde s_0^![C_\YY]$ and 
$\overline f^*[C_\YY]=[C_{\XX}]$, (1) becomes 
$s_0^!\overline f^*[C_\YY]=f^*\widetilde s_0^![C_\YY]$, which is \cite[Thm.~6.2]{Fu}.
On the other hand
$$\Tor^{\oo_{F_1}}_k(\oo_{C_{\XX}},\oo_\XX)=\Tor^{\oo_{F_1}}_k(\overline f^*\oo_{C_{\YY}},\overline f^*\oo_\YY)=f^*(\Tor^{\oo_{E_1}}_k(\oo_{C_{\YY}},\oo_\YY)),
$$
since $\overline f$ is flat and hence commutes with $\Tor$; this proves (2).

\end{proof}
\subsection{The gerbe case}

Let $X$ be a scheme, and $\eps:\XX\to X$ a gerbe over $X$ banded by a finite abelian group $G$; note that $\eps$ is always an \'etale proper morphism. Write $|G|$ for the order of $G$.

For all $W\in K^0(X)$ we have  $\eps_*\eps^*(W)=W$.
The morphism $\eps^*:A^*(X)\to A^*(\XX)$ is a ring isomorphism and
$\eps_*\eps^*:A_*(X)\to A_*(X)$ is multiplication by $\frac{1}{|G|}$.
In particular for any class $\alpha\in A^*(X)$ we have  
$\int_{[X]^\vir}\alpha=|G|\int_{[\XX]^\vir} \eps^*(\alpha)$.

Let $\FF$  be a coherent sheaf on $\XX$; then for every point $x:\spec \C\to \XX$ the fiber $\FF(x):=x^*\FF$ is naturally a representation of $G$, and hence decomposes as a direct sum over the group $G^\vee$ of characters of $G$.
The fiberwise direct sum decompositions induce a global decomposition $E:=\sum E_\chi$ where the sum runs over $\chi\in G^\vee$, and any morphism $f:E^{-1}\to E^0$ of coherent sheaves respects the characters.

We write $\chi_0$ for the  trivial character. For $E$ a coherent sheaf on $\XX$, one has $\eps^*\eps_*E=E_{\chi_0}$; hence  $E$  is a pullback from $X$ if and only if 
$E=E_{\chi_0}$, that is $E_\chi=0$ for every $\chi\ne\chi_0$.

Let $\phi:E^\bullet\to \tau_{\ge -1}L_{\XX}$ be a $1$-perfect obstruction theory for $\XX$. The morphism $\eps$ is \'etale, hence the natural map $\eps^*L_X\to L_\XX$ is an isomorphism.

\begin{Remark} \label{pulob}The following are equivalent:
\begin{enumerate}
\item there is a (unique up to isomorphism) obstruction theory $F^\bullet$ on $X$ such that $\eps$ induces an \'etale morphism of virtually smooth stacks;
\item  for every  point $x:\spec \C\to \XX$ the $G$-representations $h^i((x^*E^\bullet)^\vee)$ are trivial, for $i=0,1$.
\end{enumerate}
\end{Remark}

For the rest of this subsection we assume 
that the conditions of \remref{pulob} are fulfilled.
As before  let $E_{0}\to E_{1}$ be the dual complex to $E^\bullet$ and $F_{0}\to F_{1}$ the dual complex to $F^\bullet$, and let $T_{\XX}^\vir:=E_{0}-E_{1}$ and $T_{X}^\vir:=F_{0}-F_{1}$.
Let $[\XX]^\vir\in A^d(\XX)$, (resp.~$[X]^\vir\in A^d(X)$) be  the virtual fundamental classes defined via $E^\bullet$ (resp.~$F^\bullet$).
Denote $\oo_{\XX}^\vir\in K^0(\XX)$, (resp.~$\oo_X^\vir\in K^0(X)$) the virtual structure sheaves
on $\XX$ (resp.~$X$) defined via $E^\bullet$ (resp.~$F^\bullet$).
As before  for any $V\in K^0(\XX)$ and any $W\in K^0(X)$ let 
$$\chi^\vir(\XX,V):=\chi(\XX,V\otimes \oo_{\XX}^\vir),\quad 
\chi^\vir(X,W):=\chi(X,W\otimes \oo_{X}^\vir).$$

\begin{Corollary}\label{pull}
\begin{enumerate}

\item For any $V\in K^0(\XX)$ we have 
$\chi^\vir(\XX,V)=\chi^\vir(X,\eps_*(V)).$
\item For any $W\in K^0(X)$, we have 
$\chi^\vir(\XX,\eps^*(W))=\chi^\vir(X,W)$.
\end{enumerate}
\end{Corollary}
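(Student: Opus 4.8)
The plan is to derive both parts from one short computation with the projection formula; the only input that is not completely formal is the identity $\oo_\XX^\vir=\eps^*(\oo_X^\vir)$. Under the hypotheses of \remref{pulob} the gerbe $\eps$ is an \'etale morphism of virtually smooth DM stacks from $(\XX,E^\bullet)$ to $(X,F^\bullet)$, so this identity is precisely part (3) of the Lemma on \'etale morphisms of virtually smooth DM stacks proved above (one also gets $T_\XX^\vir=\eps^*T_X^\vir$ and $[\XX]^\vir=\eps^*[X]^\vir$, although these are not needed here). Granting it, everything else reduces to standard properties of the proper, \'etale, faithfully flat morphism $\eps$, all of which are elementary for gerbes banded by a finite group when $|G|$ is invertible.

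First I would treat (1). Starting from $\chi^\vir(\XX,V)=\chi(\XX,V\otimes\eps^*(\oo_X^\vir))$, I would use the Leray spectral sequence for the proper morphism $\eps$, which gives $\chi(\XX,\beta)=\chi(X,\eps_*\beta)$ for every $\beta\in K_0(\XX)$, to reduce to computing $\eps_*\big(V\otimes\eps^*(\oo_X^\vir)\big)\in K_0(X)$. By the projection formula this equals $\eps_*(V)\otimes\oo_X^\vir$, so $\chi^\vir(\XX,V)=\chi\big(X,\eps_*(V)\otimes\oo_X^\vir\big)$, which is $\chi^\vir(X,\eps_*(V))$ as soon as $\eps_*(V)\in K^0(X)$. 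That last point holds because $\eps^*\eps_*V=V_{\chi_0}$ is a direct summand of the locally free sheaf $V$ (the averaging idempotent $\frac{1}{|G|}\sum_{g\in G}g$ is available), hence locally free, so that $\eps_*V$ is itself locally free by faithfully flat descent along $\eps$.

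Next, (2) is immediate: applying (1) to $V=\eps^*(W)$ for $W\in K^0(X)$ and using the identity $\eps_*\eps^*(W)=W$ already recorded above, one gets $\chi^\vir(\XX,\eps^*(W))=\chi^\vir(X,\eps_*\eps^*(W))=\chi^\vir(X,W)$.

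The step I expect to be the main obstacle is the very first one: checking that the equivalent conditions of \remref{pulob} genuinely make $\eps$ into an \'etale morphism of virtually smooth stacks in the strict sense of the definition given earlier (compatibility of the obstruction theories $E^\bullet$ and $F^\bullet$, not merely an abstract isomorphism of complexes), so that the Lemma really applies and yields $\oo_\XX^\vir=\eps^*(\oo_X^\vir)$. Once that is secured, the rest is routine bookkeeping. As an alternative one could argue through virtual Hirzebruch--Riemann--Roch, combining $[\XX]^\vir=\eps^*[X]^\vir$, $T_\XX^\vir=\eps^*T_X^\vir$, the relation $\int_{[X]^\vir}\alpha=|G|\int_{[\XX]^\vir}\eps^*\alpha$ and the identity $\ch(\eps_*V)=\eps_*\ch(V)$ for the \'etale morphism $\eps$; but the $K$-theoretic route above is shorter and avoids invoking Grothendieck--Riemann--Roch for $\eps$.
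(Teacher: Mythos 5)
Your proof is correct and follows essentially the same route as the paper: the paper's one-line proof invokes the projection formula, which tacitly bundles together the facts you spell out — namely $\chi(\XX,\beta)=\chi(X,\eps_*\beta)$ for the proper morphism $\eps$, the identity $\oo_\XX^\vir=\eps^*(\oo_X^\vir)$ from part (3) of the \'etale-morphism Lemma (guaranteed by the standing assumption that the conditions of \remref{pulob} hold, whose item (1) is precisely that $\eps$ is an \'etale morphism of virtually smooth stacks), and $\eps_*\eps^*=\mathrm{id}$ for (2). You add one genuinely useful piece of bookkeeping that the paper glosses over: checking that $\eps_*V$ is again in $K^0(X)$, i.e.\ locally free, via the averaging idempotent and flat descent, which is needed for $\chi^\vir(X,\eps_*V)$ to even be defined under the paper's conventions.
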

\begin{proof} 
(1) By the projection formula 
$$\chi^\vir(\XX,V)=\chi(\XX,V\otimes \oo_{\XX}^\vir))=
\chi(X,\eps_*(V)\otimes \oo_{X}^\vir)=\chi^\vir(X,\eps_*(V)).$$
(2) By (1)  $\chi^\vir(\XX,\eps^*(W))=\chi^\vir(X,\eps_*(\eps^*(W)))=
\chi^\vir(X,W)$.
\end{proof}
\begin{Corollary}\label{SRR}
\begin{enumerate}
\item Let $V\in K^0(\XX)$, then $$\chi^\vir(\XX,V)=|G|\int_{[\XX]^\vir} \ch(\eps^*\eps_*(V))\td(T_{\XX}^\vir).$$
\item Let $W\in K^0(X)$, then 
$$\chi^\vir(\XX,\eps^*W)=
|G|\int_{[\XX]^\vir} \ch(\eps^*(W))\td(T_{\XX}^\vir).$$
\end{enumerate}
\end{Corollary}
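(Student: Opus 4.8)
The plan is to deduce both parts from the virtual Hirzebruch--Riemann--Roch theorem on the \emph{scheme} $X$ (Corollary \ref{HRR}), combined with Corollary \ref{pull} and the elementary properties of the gerbe $\eps$ already recorded above: that $\eps^*\colon A^*(X)\to A^*(\XX)$ is a ring isomorphism satisfying $\int_{[X]^\vir}\alpha=|G|\int_{[\XX]^\vir}\eps^*\alpha$ for $\alpha\in A^*(X)$, that $\eps_*\eps^*W=W$ for $W\in K^0(X)$, and that $T_\XX^\vir=\eps^*T_X^\vir$ (this last being the content of part (1) of the \'etale-morphism lemma of the previous subsection, applied to $\eps$, which is \'etale of virtually smooth stacks under the running assumption of \remref{pulob}).

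For (1), I would first apply Corollary \ref{pull}(1) to rewrite $\chi^\vir(\XX,V)=\chi^\vir(X,\eps_*V)$. Since $X$ is a scheme carrying the obstruction theory $F^\bullet$ (and is proper, so that $\chi^\vir(X,-)$ is defined), virtual HRR gives $\chi^\vir(X,\eps_*V)=\int_{[X]^\vir}\ch(\eps_*V)\cdot\td(T_X^\vir)$. Next I would transport this integral to $\XX$: setting $\alpha=\ch(\eps_*V)\cdot\td(T_X^\vir)\in A^*(X)$, the relation $\int_{[X]^\vir}\alpha=|G|\int_{[\XX]^\vir}\eps^*\alpha$ yields $\chi^\vir(\XX,V)=|G|\int_{[\XX]^\vir}\eps^*\big(\ch(\eps_*V)\cdot\td(T_X^\vir)\big)$. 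Finally, since $\eps^*$ is a ring homomorphism commuting with $\ch$ and $\td$, one has $\eps^*\big(\ch(\eps_*V)\cdot\td(T_X^\vir)\big)=\ch(\eps^*\eps_*V)\cdot\td(\eps^*T_X^\vir)=\ch(\eps^*\eps_*V)\cdot\td(T_\XX^\vir)$, which is exactly the asserted formula.

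For (2), I would simply specialize (1) to $V=\eps^*W$. Here $\eps^*\eps_*(\eps^*W)=\eps^*\big(\eps_*\eps^*W\big)=\eps^*W$ by the identity $\eps_*\eps^*W=W$, so substituting into the formula of (1) gives $\chi^\vir(\XX,\eps^*W)=|G|\int_{[\XX]^\vir}\ch(\eps^*W)\cdot\td(T_\XX^\vir)$. (Equivalently, one could start from Corollary \ref{pull}(2) and repeat the argument of (1) verbatim.)

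I do not expect a genuine obstacle here: the statement is a formal bookkeeping consequence of results already in hand. The only points that need a sentence of care are that the obstruction theory $F^\bullet$ used on $X$ is the one for which virtual HRR is available — this is precisely the hypothesis of \remref{pulob}, assumed throughout the subsection — and the compatibility of $\eps^*$ with $\ch$, $\td$ and with the comparison $\int_{[X]^\vir}=|G|\int_{[\XX]^\vir}\eps^*$ of virtual fundamental classes, all of which have been set up earlier. The single piece of genuinely geometric input is the identity $T_\XX^\vir=\eps^*T_X^\vir$, which is supplied by the \'etale-morphism lemma.
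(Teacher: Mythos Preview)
Your proposal is correct and follows essentially the same route as the paper: apply \corref{pull} to pass to $X$, use virtual HRR there, and then pull back via the identity $\int_{[X]^\vir}\alpha=|G|\int_{[\XX]^\vir}\eps^*\alpha$ together with $T_\XX^\vir=\eps^*T_X^\vir$. The only cosmetic difference is that for (2) the paper goes directly through \corref{pull}(2) rather than specializing (1), a variant you already note.
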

\begin{proof}
(1) By \corref{pull} and virtual Riemann-Roch we get 
$$\chi^\vir(\XX,V)=\chi^\vir(X,\eps_*(V))=\int_{[X]^\vir} \ch(\eps_*(V))\td(T_X^\vir)=
|G|\int_{[\XX]^\vir} \eps^*(\ch(\eps_*(V)))\td(T_{\XX}^\vir),$$ and the claim follows because $\ch$ commutes with pullback.
(2) Again  \corref{pull} and virtual Riemann-Roch give
$$\chi^\vir(\XX,\eps^*W)=\chi^\vir(X,W)=\int_{[X]^\vir} \ch(W)\td(T_X^\vir)=
|G|\int_{[\XX]^\vir} \ch(\eps^*(W))\td(T_{\XX}^\vir).$$
\end{proof}

\begin{Corollary}
\begin{enumerate}
\item $\chi_{-y}^\vir(\XX)=\chi_{-y}^\vir(X)$, $e^\vir(\XX)=e^\vir(X)$.
\item More generally for $V\in K^0(\XX)$ and $W\in K^0(X)$, we have 
$\chi_{-y}^\vir(\XX,V)=\chi_{-y}^\vir(X,\eps_*(V))$ and 
$\chi_{-y}^\vir(\XX,\eps^*W)=\chi_{-y}^\vir(X,W)$.
\item $Ell^\vir(\XX,y,q)=Ell^\vir(X,y,q)$.
\item $e^\vir(X)=|G|\int_{[\XX]^\vir} c_d(T_{\XX}^\vir).$
\end{enumerate}
\end{Corollary}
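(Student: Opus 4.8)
The plan is to derive all four statements formally from \corref{pull}, \corref{SRR}, the identity $\int_{[X]^\vir}\alpha=|G|\int_{[\XX]^\vir}\eps^*(\alpha)$ recorded above, the relation $T_\XX^\vir=\eps^*(T_X^\vir)$ (hence $\Omega_\XX^\vir=\eps^*(\Omega_X^\vir)$) which holds because $\eps$ is an \'etale morphism of virtually smooth stacks, and the compatibility of $\Lambda_{-y}$, $\E$, $\ch$ and the Chern classes with the pullback $\eps^*$. Note also that $\eps$ being \'etale forces $d=\rk E^\bullet=\rk F^\bullet$, so the normalizing factors $y^{-d/2}$ agree on $\XX$ and on $X$.

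First I would prove the general identity (2). Since $\Omega_\XX^\vir=\eps^*(\Omega_X^\vir)$ and $\Lambda_{-y}$ commutes with $\eps^*$, we have $V\otimes\Lambda_{-y}\Omega_\XX^\vir=V\otimes\eps^*(\Lambda_{-y}\Omega_X^\vir)$; applying the projection formula together with \corref{pull}(1), extended $\Q\[[y\]]$-linearly, gives
\[
\chi_{-y}^\vir(\XX,V)=\chi^\vir(\XX,V\otimes\eps^*(\Lambda_{-y}\Omega_X^\vir))=\chi^\vir(X,\eps_*(V)\otimes\Lambda_{-y}\Omega_X^\vir)=\chi_{-y}^\vir(X,\eps_*(V)).
\]
The second identity in (2) is the case $V=\eps^*(W)$, using $\eps_*\eps^*(W)=W$. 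Statement (1) is then the special case $W=\oo_X$: one has $\eps^*(\oo_X)=\oo_\XX$, hence $\chi_{-y}^\vir(\XX)=\chi_{-y}^\vir(X)$, which by \thmref{Euler} is a polynomial, so $e^\vir(\XX)=\chi_{-1}^\vir(\XX)=\chi_{-1}^\vir(X)=e^\vir(X)$.

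For (3), the operation $\E$ commutes with $\eps^*$, so $\E(T_\XX^\vir)=\eps^*(\E(T_X^\vir))$; applying the second identity of (2) with $W=\E(T_X^\vir)$ yields $\chi_{-y}^\vir(\XX,\E(T_\XX^\vir))=\chi_{-y}^\vir(X,\E(T_X^\vir))$, and multiplying both sides by $y^{-d/2}$ gives $Ell^\vir(\XX,y,q)=Ell^\vir(X,y,q)$. For (4), I would combine the Hopf index theorem $e^\vir(X)=\int_{[X]^\vir}c_d(T_X^\vir)$ of \corref{actually} with the pushforward identity applied to $\alpha=c_d(T_X^\vir)$, using $\eps^*(c_d(T_X^\vir))=c_d(\eps^*(T_X^\vir))=c_d(T_\XX^\vir)$, to get $e^\vir(X)=|G|\int_{[\XX]^\vir}c_d(T_\XX^\vir)$.

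I do not expect any serious obstacle: the whole argument is formal bookkeeping on top of \corref{pull}, \corref{SRR} and the behaviour of everything under \'etale pullback. The only points deserving a line of care are the $\Q\[[y\]]$- (resp.\ $\Q[y,y^{-1}]\[[q\]]$-) linear extension of \corref{pull}, so that the identities persist after one applies $\Lambda_{-y}$ and $\E$, and the observation that the polynomiality of $\chi_{-y}^\vir$ passes from $X$ to $\XX$, which is what makes $e^\vir(\XX)$ well defined; both are immediate.
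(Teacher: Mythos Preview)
Your proof is correct and follows exactly the same route as the paper: the paper simply says that (1), (2), (3) follow immediately from the definitions and \corref{pull}, and that (4) follows from \corref{actually}. You have spelled out the details of these deductions (using $T_\XX^\vir=\eps^*T_X^\vir$, the projection formula, and the integral identity $\int_{[X]^\vir}\alpha=|G|\int_{[\XX]^\vir}\eps^*\alpha$), but the underlying argument is identical; note also that you never actually use \corref{SRR}, so you could drop it from the plan.
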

\begin{proof}
(1), (2), (3) follow immediately from the  definitions and \corref{pull}.
(4) follows from \corref{actually}.
\end{proof}

\subsection{Moduli stacks of stable sheaves}

Let $V$ be a projective variety of dimension $d$, $H$ an ample line bundle on $V$, $r>0$ an integer and (for $i=1,\ldots,r$) $c_i\in H^{2i}(X,\ZZ)$ cohomology classes. We denote by $\Xx$ the moduli stack of $H$--stable bundles of rank $r$ and Chern classes $c_i$ on $V$. We denote as usual by $\Pic V$ the Picard group of $V$, and by  $\Ppic V$ the Picard stack of $V$, i.e. the moduli stack of line bundles on $V$. The determinant defines a natural morphism $\det:\Xx\to\Ppic V$, and there is a natural map $\Ppic V\to \Pic V$ which identifies $\Pic V$ with the coarse moduli space of $\Ppic V$.

Both $\Xx$ and $\Ppic V$ are algebraic stacks in the sense of Artin, and they have a natural structure of gerbes banded by $\Gg_m$ over their coarse moduli spaces, since all stable bundles and all line bundles are simple, i.e. their automorphism group is given by nonzero scalar multiples of the identity. It is a well known fact that the gerbe $\Ppic V\to \Pic V$ is trivial, and we choose a trivialization i.e., a Poincar\'e line bundle on $V\times \Pic V$, that is a section of the structure morphism $\Ppic X\to \Pic X$. We denote by $\XX$ the fiber product $\Xx\times_{\Ppic V}\Pic V$; if $L\in \Pic V$ is a line bundle, we denote by $\XX_L$ the fiber of $\XX$ over $L$. Both $\XX$ and $\XX_L$ are DM stacks, and are naturally gerbes banded by $\mu_r$ over their coarse moduli spaces, which we denote by $X$ and $X_L$ respectively. Both $X$ and $X_L$ are quasiprojective schemes; they are projective if there are no strictly semistable sheaves with the given rank and Chern classes (and determinant, in the case of $X_L$).

Let $\FF$ be an $H$-stable sheaf of rank $r$, with Chern classes $c_i$ and determinant $L$, and denote by $f$ the corresponding morphism from $\spec \mathbb C$ to either $\XX$ or $\XX_L$. The stack $\XX$ and $\XX_L$ have natural obstruction theories $E^\bullet$ and $E_L^\bullet$ with the property that $h^i(f^*(E_L^\bullet)^\vee)=\Ext^{i+1}_0(\FF,\FF)$, $h^1(f^*(E^\bullet)^\vee)=\Ext^2_0(\FF,\FF)$, $h^0(f^*(E_L^\bullet)^\vee)=\Ext^1(\FF,\FF)$. 

\begin{Lemma} Let $c\in \mathbb C$ be a nonzero scalar. Then the automorphism induced on $\Ext^i(\FF,\FF)$ and $\Ext^i_0(\FF,\FF)$ by acting simultaneously on both copies of $\FF$ with the scalar $c$ is the identity.
\end{Lemma}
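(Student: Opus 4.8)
The plan is to make the induced action on $\Ext$ explicit and read off a cancellation. First I would recall that $\Ext^i(\FF,\FF)=\Hom_{D^b(V)}(\FF,\FF[i])$, a $\C$-vector space which is bifunctorial in the two copies of $\FF$, contravariant in the first argument and covariant in the second. Writing $g:=c\cdot\id_\FF\in\operatorname{Aut}(\FF)$, acting with $g$ simultaneously on both copies of $\FF$ is, by this bifunctoriality, the conjugation map $e\mapsto g[i]\circ e\circ g^{-1}$ on $\Ext^i(\FF,\FF)$; the inverse in the contravariant slot is precisely what is needed for $\operatorname{Aut}(\FF)$ to act on the left. This convention is the one delicate point: it is the reason the outcome will be the identity rather than multiplication by $c^2$.

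Next I would simply compute. Since $g[i]=c\cdot\id_{\FF[i]}$ and $g^{-1}=c^{-1}\cdot\id_\FF$, and since $D^b(V)$ is $\C$-linear with scalar multiples of identity morphisms central with respect to composition, $g[i]\circ e\circ g^{-1}=c\,c^{-1}\,e=e$ for every $e$. Equivalently, phrased without derived categories: the covariant slot contributes multiplication by $c$ and the contravariant slot (taken so as to give a left action) multiplication by $c^{-1}$, and the two cancel. Hence the automorphism induced by $g$ on $\Ext^i(\FF,\FF)$ is the identity.

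Finally, for the traceless groups I would observe that $\Ext^i_0(\FF,\FF)$ sits in $\Ext^i(\FF,\FF)$ as the kernel of the functorial trace map $\tr\colon\Ext^i(\FF,\FF)\to \Ext^i(\oo_V,\oo_V)=H^i(V,\oo_V)$, so it is preserved by the automorphism induced by $g$, and the restriction of the identity to it is again the identity. I do not anticipate any real obstacle; the entire content is the one-line conjugation computation, and the only thing to handle with care is the variance bookkeeping in the first step.
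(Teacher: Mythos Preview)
Your proof is correct and follows essentially the same argument as the paper's: the contravariant slot contributes $c^{-1}$, the covariant slot contributes $c$, and the two cancel. Your version is more explicit about the conventions (phrasing the simultaneous action as conjugation in $D^b(V)$ and spelling out why $\Ext^i_0$, as the kernel of the trace, is preserved), but the content is identical.
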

\begin{proof} Since $\Ext$ is contravariant in the first variable and covariant in the second, the scalar automorphism $c$ applied to the first variable acts as $c^{-1}$, and applied to the second variable it acts as $c$. The vector space $\Ext^i_0(\FF,\FF)$ carries the induced action.
\end{proof}
\begin{Proposition} The complex $E^\bullet$ (respectively $E^\bullet_L$) is the pullback of a unique obstruction theory  on $X$ (resp.~on $X_L$).
\end{Proposition}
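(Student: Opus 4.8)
The plan is to deduce this directly from \remref{pulob}. We have already observed that $\eps\colon\XX\to X$ and $\eps_L\colon\XX_L\to X_L$ are gerbes banded by the finite abelian group $\mu_r$, so \remref{pulob} applies in both cases; since it produces the descended obstruction theory uniquely up to isomorphism, it is enough to verify condition~(2) of that remark. Concretely, let $\FF$ be an $H$-stable sheaf of the prescribed rank $r$, Chern classes $c_i$, and (in the case of $\XX_L$) determinant $L$, and let $f\colon\Spec\C\to\XX$ (resp.\ $f\colon\Spec\C\to\XX_L$) be the associated point. We must check that the $\mu_r$-representations $h^i\big((f^*E^\bullet)^\vee\big)$ (resp.\ $h^i\big((f^*E^\bullet_L)^\vee\big)$) are trivial for $i=0,1$.

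First I would identify these representations as vector spaces with $\mu_r$-action. By the stated properties of the obstruction theories,
$$h^0\big((f^*E^\bullet_L)^\vee\big)=\Ext^1_0(\FF,\FF),\qquad h^1\big((f^*E^\bullet_L)^\vee\big)=\Ext^2_0(\FF,\FF),$$
while $h^0\big((f^*E^\bullet)^\vee\big)=\Ext^1(\FF,\FF)$ and $h^1\big((f^*E^\bullet)^\vee\big)=\Ext^2_0(\FF,\FF)$. In each case the $\mu_r$-action is the one induced by the automorphisms of $f$ as a point of the stack: the obstruction theory is built from the universal sheaf, on which the band $\mu_r$ acts by scalar multiplication, and the fibres above involve two copies of $\FF$, so an element $\zeta\in\mu_r\subset\C^*$ acts on $h^i\big((f^*E^\bullet)^\vee\big)$ by simultaneous scalar multiplication by $\zeta$ on both copies of $\FF$.

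Now I would invoke the Lemma just proved: simultaneous scalar multiplication by any nonzero $c\in\C$ acts as the identity on $\Ext^i(\FF,\FF)$ and on $\Ext^i_0(\FF,\FF)$. Applying this with $c=\zeta$ for each $\zeta\in\mu_r$ shows that every one of the representations $h^i\big((f^*E^\bullet)^\vee\big)$ and $h^i\big((f^*E^\bullet_L)^\vee\big)$, $i=0,1$, is trivial. Hence condition~(2) of \remref{pulob} holds in both cases, and we conclude that $E^\bullet$ (resp.\ $E^\bullet_L$) is the pullback of a unique obstruction theory on $X$ (resp.\ on $X_L$).

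The one point requiring care is the identification, carried out in the second step, of the $\mu_r$-action on $h^i\big((f^*E^\bullet)^\vee\big)$ with the diagonal scalar action on $\Ext^\bullet(\FF,\FF)$; this amounts to unwinding the construction (via the Atiyah class of the universal family) of the standard obstruction theory on a moduli stack of sheaves, and it is exactly the situation for which the preceding Lemma is designed. For $\XX$, as opposed to $\XX_L$, one should in addition keep track of the determinant factor, but the relevant cohomology groups remain $\Ext$ or trace-free $\Ext$ groups carrying the same diagonal scalar action, so the argument goes through unchanged.
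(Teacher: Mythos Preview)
Your argument is correct and is exactly the approach the paper takes: the paper's proof reads in its entirety ``This follows immediately by \remref{pulob}.'' You have simply unpacked this by verifying condition~(2) of that remark via the preceding Lemma, which is precisely what the authors intend.
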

\begin{proof} This follows immediately by \remref{pulob}.
\end{proof}

\section*{Appendix -- A detailed proof of Theorem \ref{Euler}}

Fix formal variables $x_i$ for $i=1,\ldots,n$ and $y_j$ for $j=1,\ldots,m$ and let $A$ be the formal power series ring $A:=\Q\[[x_i,y_j\]]$. Let $d=n-m$ which we assume nonegative, $\m_A$ the maximal ideal in $A$, and let $\bar A$ be the quotient ring $A/\m_A^{d+1}$ (formal power series developments up to order $d$). 

Let $t$ indicate one of the variables $x_i$'s or $y_j$'s.
Let $$f(t,y):=\frac{t}{1-e^{-t}}(1-ye^{-t})\in A[y].$$ Since $t/(1-e^{-t})$ is invertible in $A$, $f$ is invertible in $A\[[y\]]$; its inverse is the power series $$
f_*(t,y):=\frac{1-e^{-t}}{t}(\sum_{r\ge 0}y^re^{rt})\in A\[[y\]].$$

We now consider the variable change $y=1-1/u$, i.e. $u=1/(1-y)$. Note that $g(t,u):=uf(t,1-1/u)\in A[u,1/u]$ is actually a degree one polynomial, which can be written as $$g(t,u)=ut+\frac{te^{-t}}{1-e^{-t}}=1+\ell+tu$$ with $\ell\in \m_A$. Note that $g$ is invertible and its inverse is $$g_*(t,u)=\sum_{r\ge 0}(-1)^r(\ell+tu)^r.$$

Let $\xi=\sum \xi_ku^k\in A\[[u\]]$ be a power series. We say that $\xi$ is good if $\xi_k\in \m_A^k$ for every $k$; in that case, we denote by $\tilde \xi$ the power series $\sum \tilde\xi_ku^k$ where $\tilde\xi_k$ is the homogeneous part of degree $k$ in $\xi_k$. Good power series are closed under sum, product, and infinite sum when this makes sense, and the map $\xi\mapsto \tilde\xi$ commutes with all these operations.
\begin{Claim} The power series $g_*(t,u)$ is good, and $\tilde g_*(t,u)=(1+tu)^{-1}$.
\end{Claim}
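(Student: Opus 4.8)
The plan is to exploit the stated compatibility of the operation $\xi\mapsto\tilde\xi$ with products and with admissible infinite sums, rather than computing the coefficients of $g_*$ one at a time. Recall that $g(t,u)=1+\ell+tu$ with $\ell\in\m_A$, and that $g_*(t,u)=\sum_{r\ge 0}(-1)^r(\ell+tu)^r$ is its inverse; this infinite sum converges in $A\[[u\]]$ because the coefficient of $u^j$ in $(\ell+tu)^r$ equals $\binom{r}{j}\ell^{r-j}t^j\in\m_A^r$, so for each fixed $j$ only finitely many $r$ contribute modulo any given power of $\m_A$.

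First I would observe that $\ell+tu$ is good: its $u^0$-coefficient is $\ell\in A=\m_A^0$, its $u^1$-coefficient is $t\in\m_A$, and the remaining coefficients vanish. Since good power series are closed under products, every $(\ell+tu)^r$ is good; since they are closed under infinite sums that make sense — and the sum above does, by the $\m_A^r$-estimate of the previous paragraph — it follows that $g_*$ is good.

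Next, applying that $\xi\mapsto\tilde\xi$ commutes with products and with such infinite sums gives $\widetilde{g_*}(t,u)=\sum_{r\ge 0}(-1)^r\bigl(\widetilde{\ell+tu}\bigr)^r$. The one real point is the computation $\widetilde{\ell+tu}=tu$: the $u^0$-coefficient of $\ell+tu$ is $\ell$, whose homogeneous part of degree $0$ is $0$ precisely because $\ell\in\m_A$, while the $u^1$-coefficient $t$ is already homogeneous of degree $1$. Substituting, $\widetilde{g_*}(t,u)=\sum_{r\ge 0}(-1)^r(tu)^r=(1+tu)^{-1}$, as claimed.

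The only step needing care is verifying that $\sum_r(-1)^r(\ell+tu)^r$ is a legitimate infinite sum of good power series, so that both the closure of goodness and the commutation of $\widetilde{(\,\cdot\,)}$ with the sum apply; this is exactly the observation that the $u^j$-coefficient of $(\ell+tu)^r$ lies in $\m_A^r$. If one prefers to bypass the formal machinery, one can instead compute directly that $(g_*)_j=t^j\sum_{r\ge j}(-1)^r\binom{r}{j}\ell^{r-j}=(-1)^jt^j+(\text{terms in }\m_A^{j+1})$, from which both goodness and $\widetilde{g_*}=\sum_{j\ge 0}(-tu)^j=(1+tu)^{-1}$ follow at once.
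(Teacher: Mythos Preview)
Your proof is correct and follows essentially the same approach as the paper: set $\xi=\ell+tu$, observe that $\xi$ is good with $\tilde\xi=tu$, and then use closure under products and admissible infinite sums to deduce that $g_*$ is good with $\tilde g_*=\sum_{r\ge 0}(-1)^r(tu)^r=(1+tu)^{-1}$. You add helpful detail the paper omits, namely the explicit verification (via $\binom{r}{j}\ell^{r-j}t^j\in\m_A^r$) that the infinite sum is admissible, but the core argument is identical.
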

\begin{proof} Let $\xi:=\ell+tu$. Then $\xi$ is good and $\tilde \xi=tu$. Hence $g_*$ is good, and $\tilde g_*:=\sum_{r\ge 0}(-1)^r\tilde\xi^r=\sum_{r\ge 0}(-1)^r (tu)^r=(1+tu)^{-1}$.
\end{proof}
We denote by $\bar f$ the image of $f$ in $\bar A[u]$, and similarly for $f_*,g,g_*$. Note that by the claim $\bar g_*$ is actually a polynomial in $u$ of degree at most $d$.

Since $g$ is a polynomial, we can consider $g(t,1/(1-y))\in A\[[y\]]$; it is easy to check that $g(t,1/(1-y))=(1-y)^{-1}f(t,y)$. In particular $\bar g(t,1/(1-y))=(1-y)^{-1}\bar f(t,y)$.

Since $\bar g_*$ is a polynomial in $u$, it makes sense to consider $\bar g_*(t,1/(1-y))\in \bar A\[[y\]]$.

\begin{Claim} In the ring $\bar A\[[y\]]$ one has the equality $$
\frac{1}{1-y}\bar g_*(t,1/(1-y))=\bar f_*(t,y).$$
\end{Claim}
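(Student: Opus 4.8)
The plan is to bootstrap the claimed identity out of three facts already established: that $g_*$ is the inverse of $g$ in $A\[[u\]]$; that $\bar g_*$ is, by the preceding Claim, a \emph{polynomial} in $u$ of degree at most $d$; and that $\bar g(t,1/(1-y))=(1-y)^{-1}\bar f(t,y)$ in $\bar A\[[y\]]$. The essential move will be to promote the power series relation $g g_*=1$ to a genuine identity of \emph{polynomials} in $u$ over $\bar A$, since only then is it legitimate to substitute the (non-nilpotent) element $1/(1-y)$ for $u$.

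First I would note that $\bar g=1+\bar\ell+tu$ has degree one in $u$ and $\bar g_*$ has degree at most $d$ in $u$, so the product $\bar g\,\bar g_*$ lies in $\bar A[u]$. Reducing the identity $g g_*=1$ of $A\[[u\]]$ modulo $\m_A^{d+1}$ gives $\bar g\,\bar g_*=1$ in $\bar A\[[u\]]$; since the inclusion $\bar A[u]\hookrightarrow\bar A\[[u\]]$ is injective and multiplicative, this forces $\bar g\,\bar g_*=1$ already in $\bar A[u]$, i.e. as an identity of polynomials in $u$.

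Next I would apply the $\bar A$-algebra homomorphism $\bar A[u]\to\bar A\[[y\]]$ sending $u\mapsto 1/(1-y)=\sum_{n\ge0}y^n$, which is well defined precisely because the relation is now polynomial. It yields $\bar g(t,1/(1-y))\cdot\bar g_*(t,1/(1-y))=1$ in $\bar A\[[y\]]$. Substituting $\bar g(t,1/(1-y))=(1-y)^{-1}\bar f(t,y)$ rewrites this as $\bar f(t,y)\cdot\bigl(\tfrac{1}{1-y}\,\bar g_*(t,1/(1-y))\bigr)=1$. Since $f=\frac{t}{1-e^{-t}}(1-ye^{-t})$ is a product of two units of $A\[[y\]]$, $\bar f(t,y)$ is a unit of $\bar A\[[y\]]$ with inverse $\bar f_*(t,y)$; multiplying through by $\bar f_*(t,y)$ then gives $\frac{1}{1-y}\,\bar g_*(t,1/(1-y))=\bar f_*(t,y)$, as wanted.

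The one point requiring care is the step in the second paragraph: one must know $\bar g\,\bar g_*$ lies in $\bar A[u]$ before specializing $u$, and this is exactly what goodness of $g_*$ — hence the degree bound on $\bar g_*$ — supplies. Everything else is routine bookkeeping.
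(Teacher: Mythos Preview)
Your proof is correct and follows essentially the same route as the paper's: both arguments reduce the claim to the identities $\bar g\,\bar g_*=1$ (after the substitution $u=1/(1-y)$), $\bar f\,\bar f_*=1$, and $\bar g(t,1/(1-y))=(1-y)^{-1}\bar f(t,y)$. Your version is in fact more explicit than the paper's about the delicate point, namely why the substitution $u\mapsto 1/(1-y)$ into $\bar g\,\bar g_*=1$ is legitimate --- because goodness of $g_*$ forces $\bar g_*$ to be a polynomial in $u$, so the identity lives in $\bar A[u]$ before specialization.
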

\begin{proof} From the equations $g g_*=1$ in $A\[[u\]]$ and $f f_*=1$ in $A\[[y\]]$ it follows that $\bar f\bar f_*=1$ and $\bar g(t,1/(1-y))\bar g_*(t,1/(1-y))=1$ in $\bar A\[[y\]]$. We can multiply both sides of the claim  by $\bar g(t,1/(1-y))$ and $\bar f(t,y)$ since these are invertible elements. The result to prove becomes $$
\frac{1}{1-y}\bar f(t,y)= \bar g(t,1/(1-y))$$
which immediately follows from the corresponding equality in $A\[[y\]]$.\end{proof}

The claim above would not make sense in $A\[[y\]]$ since we cannot substitute in the power series $g_*$ the power series $(1-y)^{-1}$ which has a nonzero constant term.

Let $$\XX_{-y}(X):=\prod_{i=1}^n \bar f(x_i,y)\prod _{j=1}^m \bar f_*(y_j,y)\in \bar A\[[y\]].$$

\begin{Claim} (1) In the ring $\bar A\[[y\]]$ one has the equality
$$\XX_{-y}(X)=(1-y)^{d}\prod_{i=1}^n \bar g(x_i,1/(1-y))\prod_{j=1}^m\bar g_*(y_j,1/(1-y))$$
(2) $\XX_{-y}(X)$ is a polynomial of degree at most $d$.\\
(3) Write $\XX_{-y}(X)=\sum_{l=0}^r \XX^l(1-y)^l$. Then $\XX^l-c_l(T_X^\vir)\in \m_{\bar A}^{l+1}$. 
\end{Claim}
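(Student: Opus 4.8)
The plan is to funnel all three parts through the single auxiliary polynomial
$$P(u):=\prod_{i=1}^{n}\bar g(x_i,u)\,\prod_{j=1}^{m}\bar g_*(y_j,u)\in\bar A[u],$$
which makes sense because $g$ is linear in $u$ and, by the first Claim and the remark following it, each $\bar g_*(y_j,u)$ is a polynomial in $u$ of degree at most $d$. The first step is to record that $P(u)$ is a \emph{good} power series in $u$: each factor $\bar g(x_i,u)=1+\bar\ell+x_iu$ with $\bar\ell\in\m_{\bar A}$ is good (its $u^{0}$-coefficient lies in $\bar A=\m_{\bar A}^{0}$, its $u^{1}$-coefficient $x_i$ lies in $\m_{\bar A}$, and higher coefficients vanish) with $\widetilde{\bar g}(x_i,u)=1+x_iu$, and each $\bar g_*(y_j,u)$ is good with $\widetilde{\bar g}_*(y_j,u)=(1+y_ju)^{-1}$ by the first Claim. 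Since goodness and the operation $\xi\mapsto\tilde\xi$ are preserved under products, $P$ is good and
$$\tilde P(u)=\prod_{i=1}^{n}(1+x_iu)\,\prod_{j=1}^{m}(1+y_ju)^{-1}=\sum_{k\ge 0}c_k(T_X^\vir)\,u^{k}\in\bar A[u],$$
the last equality being the standard fact that $\prod_i(1+x_i)/\prod_j(1+y_j)$ is the total Chern class of $T_X^\vir=[E_0]-[E_1]$, whose degree-$k$ part is $c_k(T_X^\vir)$. Because $\m_{\bar A}^{d+1}=0$, goodness forces the $u^{k}$-coefficient of $P$ to vanish for $k>d$; writing $P(u)=\sum_{k=0}^{d}P_k u^{k}$ we thus have $P_k\in\m_{\bar A}^{k}$ with homogeneous degree-$k$ part $\tilde P_k=c_k(T_X^\vir)$.

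For part (1) the plan is a direct substitution. The relation stated just before the Claim gives $\bar f(x_i,y)=(1-y)\,\bar g(x_i,1/(1-y))$, and the second Claim gives $\bar f_*(y_j,y)=(1-y)^{-1}\,\bar g_*(y_j,1/(1-y))$; plugging these into the definition $\XX_{-y}(X)=\prod_i\bar f(x_i,y)\prod_j\bar f_*(y_j,y)$ and balancing the $n$ factors $(1-y)$ against the $m$ factors $(1-y)^{-1}$ produces $(1-y)^{n-m}\prod_i\bar g(x_i,1/(1-y))\prod_j\bar g_*(y_j,1/(1-y))=(1-y)^{d}P(1/(1-y))$. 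The substitution of the constant-term-nonzero series $1/(1-y)$ into $\bar g$ and $\bar g_*$ is legitimate precisely because these are polynomials in $u$.

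Parts (2) and (3) then drop out of that identity. Expanding, $\XX_{-y}(X)=(1-y)^{d}\sum_{k=0}^{d}P_k(1-y)^{-k}=\sum_{k=0}^{d}P_k(1-y)^{d-k}$, which is manifestly a polynomial in $(1-y)$, hence in $y$, of degree at most $d$; this is part (2). Reading this as the $(1-y)$-expansion $\XX_{-y}(X)=\sum_{l=0}^{d}(1-y)^{d-l}\XX^{l}$ identifies $\XX^{l}=P_l$, and, $P$ being good, $P_l\in\m_{\bar A}^{l}$ with homogeneous degree-$l$ part $\tilde P_l=c_l(T_X^\vir)$; hence $\XX^{l}-c_l(T_X^\vir)=P_l-\tilde P_l\in\m_{\bar A}^{l+1}$, which is the congruence of part (3) in the normalization of \thmref{Euler}(3).

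The one point that really needs care is the degree bound $\deg_u P\le d$: a crude count gives only $\deg_u P\le n+md$, and it is essential to use that $P$ is good — that its $u^{k}$-coefficient always carries at least $k$ factors from $\m_{\bar A}$ — together with $\m_{\bar A}^{d+1}=0$. This is the heart of the matter, since it is exactly what makes $\XX_{-y}(X)$ a polynomial of bounded degree. Everything else is formal: the multiplicativity of goodness and of $\xi\mapsto\tilde\xi$ is already granted, the substitution in part (1) is bookkeeping in $\bar A\[[y\]]$, and the identification $\tilde P_k=c_k(T_X^\vir)$ is the usual Chern-polynomial computation.
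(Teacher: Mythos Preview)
Your argument is correct and follows essentially the same route as the paper: you form the auxiliary product $P(u)$ (the paper calls it $h(u)$, defined in $A[[u]]$ rather than $\bar A[u]$, but this is immaterial), use that each factor is good to conclude the product is good, deduce the degree bound from $\m_{\bar A}^{d+1}=0$, and compute $\tilde P(u)=\prod_i(1+x_iu)/\prod_j(1+y_ju)$ via the first Claim; part (1) is handled by both via the substitution identities for $\bar g$ and $\bar g_*$. You are also right to flag the indexing discrepancy in part (3): the Claim as stated writes the expansion in powers of $(1-y)^l$, but the congruence $\XX^l-c_l(T_X^{\vir})\in\m_{\bar A}^{l+1}$ only matches the computation when $\XX^l$ denotes the coefficient of $(1-y)^{d-l}$, i.e.\ the normalization of \thmref{Euler}(3), which is what you use.
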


\begin{proof} (1) We can apply the previous claim since $(1-y)^d=(1-y)^n/(1-y)^m$.\\
(2) The power series $$h(u):=\prod_{i-1}^n g(x_i,u)\cdot\prod_{i=1}^m g_*(y_j,u)\in A\[[u\]]$$ is good since each of its factors is good, and therefore $\bar h(u)$ is a polynomial of degree at most $d$.\\
(3) It is enough to prove that $\tilde h(u)=\prod_{i=1}^n(1+x_iu)\prod_{j=1}^m(1+y_ju)^{-1}$, and this follows from the definition and the first claim.
\end{proof}

\end{document}